\documentclass[draft]{article}

\usepackage{amsmath}
\usepackage{amsthm}
\usepackage{amsopn}
\usepackage{amsfonts}
\usepackage{amssymb}
\usepackage{cite}
\usepackage{enumitem}
\usepackage{array,multirow}
\usepackage{float}
\usepackage{xcolor}
\usepackage[margin=1in]{geometry}

\usepackage{tikz}

\theoremstyle{theorem}

\newtheorem{lemma}{Lemma}
\newtheorem{corollary}{Corollary}
\newtheorem{proposition}{Proposition}
\newtheorem{theorem}{Theorem}
\newtheorem*{remark}{Remark}

\DeclareMathOperator{\mypslop}{PSL}
\newcommand{\mypsl}[2]{\mypslop\left(#1, #2\right)}
\DeclareMathOperator{\mypdeltalop}{P \Delta L}
\newcommand{\mypdeltal}[2]{\mypdeltalop\left(#1, #2\right)}
\newcommand{\Tau}{\mathrm{T}}
\DeclareMathOperator{\mybigoop}{\mathcal{O}}
\newcommand{\mybigo}[1]{\mybigoop\left(#1\right)}
\DeclareMathOperator{\mymooreop}{M}
\newcommand{\mymoore}[2]{\mymooreop\left(#1, #2\right)}
\DeclareMathOperator{\mydistop}{d}
\newcommand{\mydist}[2]{\mydistop\left(#1, #2\right)}
\DeclareMathOperator{\mydiamop}{Diam}
\newcommand{\mydiam}[1]{\mydiamop\left(#1\right)}
\DeclareMathOperator{\mydegop}{Deg}
\newcommand{\mydeg}[1]{\mydegop\left(#1\right)}
\DeclareMathOperator{\myautop}{Aut}
\newcommand{\myaut}[1]{\myautop\left(#1\right)}
\DeclareMathOperator{\mydgop}{DG}
\newcommand{\mydg}[2]{\mydgop\left(#1, #2\right)}
\DeclareMathOperator{\mywgop}{WG}

\newcommand{\mywgp}[1]{\mywgop\left(#1\right)}
\DeclareMathOperator{\myvertop}{V}
\newcommand{\myvert}[1]{\myvertop\left(#1\right)}
\DeclareMathOperator{\mycayleyop}{\Gamma}
\newcommand{\mycayley}[2]{\mycayleyop\left(#1, #2\right)}

\tikzstyle{every node}=[
	circle,
	draw,
	black,
	fill=black,
	inner sep=0pt,
	minimum size=2pt,
]
\tikzstyle{every path}=[
	thick,
]

\begin{document}
\title{On Word and G\'omez Graphs and Their Automorphism Groups in the
	Degree-Diameter Problem}
\author{J. Fraser}
\maketitle

\begin{abstract}
One of the prominent areas of research in graph theory is the degree-diameter
problem, in which we seek to determine how many vertices a graph may have when
constrained to a given degree and diameter. Different variants of this problem
are obtained by considering further restrictions on the graph, such as whether
it is directed, undirected, mixed, vertex-transitive etc. The currently best
known extremal constructions are the G\'omez graphs, both when considered as
directed and undirected. As the G\'omez graphs are similar to the
Faber-Moore-Chen graphs, we give a natural generalisation of their definition
which we call \textit{word graphs}. We then prove that the G\'omez graphs are
as large as possible word graphs for given degree and diameter. Further, we provide a test
to determine the full automorphism group of a word graph, and apply it to the
extremal directed G\'omez graphs to settle the previously open question of when
the G\'omez graphs are also Cayley graphs. Finally, we conclude with a brief
list of interesting related open problems.
\end{abstract}

\section{Introduction}

This paper aims to address two principle questions in the degree diameter
problem (a survey of which can be found in \cite{degree-diameter-survey}). The
first question, posed in \cite{when-cayley}, concerns when the G\'omez graphs
are Cayley. To this question we provide a solution in the extremal case, though
note that the paper of G\'omez \cite{gomez} defines a broader family of graphs
than we address here. The second question we address regards the similarity in
the construction methods used in the Faber-Moore-Chen graphs
\cite{faber-moore-chen, faber-moore-chen-2}
and the G\'omez graphs, and shows that with reasonable
assumptions the G\'omez graphs provide an optimal construction.

This paper is divided into material serving four logical purposes.
The first section on tau and sigma sequences provides a technical proof
necessary in subsequent work whose inclusion at a later stage would interrupt
the natural flow of argument. The second section on word graphs contains a
discussion of a natural generalisation of Faber-Moore-Chen graphs and G\'{o}mez
graphs. This section achieves two main goals, firstly an optimality result for
the G\'{o}mez graphs, and secondly providing the motivation to the argument
persued in the remaining section. Sections 4 through 7 deal with the problem of
classifying the full automorphism group of extremal directed G\'omez graphs, and
then showing the limitation of the technique used for dealing with all G\'omez
graphs. The final section concludes with a list of relevant questions which
remain open.

\section{Tau and Sigma Sequences}

In this section we define two special sequences, and we aim to count how many
times a given initial value may occur in these sequences. We define a
\textit{$\tau$-sequence} as an ordered sequence of $n$ integers
$a_1 a_2 \dots a_n$ such that
\begin{enumerate}[label=\textnormal{(\roman*)}]
	\item \label{tau-at-least-one}
		$a_i \geq 0$,
	\item \label{tau-descending}
		if $a_i > 1$ then $a_{i - 1} = a_i - 1$,
	\item \label{tau-is-three-seqs}
		there are at most three $i$ such that $a_i = 0$,
	\item \label{tau-rotate}
		if $a_1 a_2 \dots a_n$ is a $\tau$-sequence then
		$a_n a_1 a_2 a_3 \dots a_{n - 1}$ must also be a
		$\tau$-sequence.
\end{enumerate}
We shall use $\tau(n, \alpha, \beta)$ to indicate the number of
$\tau$-sequences of length $n$ such that $a_1 = \alpha$ and $a_n = \beta$, and
$\tau(n, \alpha)$ the number of $\tau$-sequences of length $n$ such that
$a_1 = \alpha$.

Informally we may think of a $\tau$-sequence of length $n$ as being a
concatenation of either 1, 2 or 3 sequences of ascending integers starting at
0, or a rotation thereof. We now aim to evaluate $\tau(n, i)$ for each
$0 \leq i < n$.

\begin{lemma}
For $n > 1$, $\tau(n, 0, 0) = n - 1$.
\end{lemma}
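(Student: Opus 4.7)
The plan is to exploit the informal description of a $\tau$-sequence as (a cyclic rotation of) a concatenation of $1$, $2$, or $3$ runs of the form $0, 1, 2, \ldots, k$, with one such run for each zero. The first step is to observe that because $a_1 = 0$, no run can cross the cyclic boundary between positions $n$ and $1$: a run spanning this boundary would place a value $\geq 1$ at position $1$. Hence the sequence itself (with no rotation required) decomposes as a concatenation of runs starting at position $1$. The condition $a_n = 0$ then forces the final run to consist of the single element $0$, since any longer run placed at the end would have $a_n \geq 1$.

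Next, by condition (iii) there are at most three zeros in total, and since $n > 1$ the zeros at positions $1$ and $n$ are distinct; so the sequence consists of either exactly two or exactly three runs. In the two-run case the first run has length $n - 1$, which forces it to be $0, 1, 2, \ldots, n - 2$, so the only sequence is $0, 1, 2, \ldots, n - 2, 0$. In the three-run case the third run has length $1$ and the first two have lengths $\ell_1, \ell_2 \geq 1$ summing to $n - 1$; the number of ordered pairs $(\ell_1, \ell_2)$ is $n - 2$. Summing the two cases gives $1 + (n - 2) = n - 1$, the claimed count.

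The main step that requires care is the alignment argument in the first paragraph: to be fully rigorous I would extract, directly from conditions (i)--(iv), the fact that every occurrence of the value $0$ in a $\tau$-sequence is the first element of a maximal ascending run of consecutive integers. Once this is in hand, $a_1 = 0$ immediately implies that the cyclic decomposition begins precisely at position $1$, and the rest of the argument is a straightforward enumeration by number of zeros, with no risk of double counting from rotations.
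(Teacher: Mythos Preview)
Your proposal is correct and follows essentially the same approach as the paper: both arguments split into the cases of two versus three zeros and count by the location of the intermediate zero (equivalently, by the lengths $\ell_1,\ell_2$ of your first two runs). The paper's version replaces your ``run'' language with the direct observation that once $a_k=0$ the values $a_{k+j}$ are forced to equal $j$ until the next zero, which is exactly the rigorization you flag as the main remaining step.
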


\begin{proof}
We begin with the observation that in any $\tau$-sequence, if $a_k = 0$ and
$a_{k + j} \neq 0$ for some range of $j$, then we may repeatedly apply
\ref{tau-at-least-one} and \ref{tau-descending} to show that
$a_{k + j} = j$. In particular, each $a_{k + j}$ is uniquely defined.

Now suppose that $a_1 a_2 \dots a_n$ is a $\tau$-sequence with $a_1 = a_n = 0$.
First suppose that there is no $j$ such that $1 < j < n$ and $a_j = 0$. If this
is the case then each $a_j = j - 1$, and there is exactly one
$\tau$-sequence with this property.

Now suppose that there is some $k$ such that $1 < k < n$ and $a_k = 0$. By
\ref{tau-is-three-seqs} there is no $j \not\in \{ 1, k, n \}$ such that
$a_j = 0$. Hence for $1 < j < k$ we must have $a_j = j - 1$ and for
$k < j < n$ we must have $a_{k + j} = j$. Hence there is exactly one
$\tau$-sequence with $a_k = 1$ for each possible value of $k$. This gives rise
to $n - 2$ possible $\tau$-sequences.

Hence, in total, we have $n - 1$ possible $\tau$-sequences of length $n$ with
$a_1 = a_n = 0$, hence $\tau(n, 0, 0) = n - 1$.
\end{proof}

\begin{lemma}
For $1 < i \leq n$, we have $\tau(n, 0, n - i) = i - 1$.
\end{lemma}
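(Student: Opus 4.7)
The plan is to reuse the structural observation from the previous lemma: once a zero occurs at position $k$ in a $\tau$-sequence, properties \ref{tau-at-least-one} and \ref{tau-descending} force $a_{k+j} = j$ for each subsequent non-zero entry, so a $\tau$-sequence is completely determined by the positions of its zeros (subject to \ref{tau-is-three-seqs}, which caps their number at three). This reduces the counting problem to a purely combinatorial question of where to place the zeros.

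Starting from $a_1 = 0$, if no further zero appeared, the sequence would have to be $0, 1, 2, \ldots, n - 1$, giving $a_n = n - 1$ and hence $i = 1$, which is excluded by hypothesis. Therefore at least one more zero must occur. Let $k$ denote the position of the \emph{last} zero. If $k < n$, the entries to its right form the run $1, 2, \ldots, n - k$, so $a_n = n - k$; matching this against $a_n = n - i$ forces $k = i$. If instead $k = n$, then $a_n = 0$, which corresponds to $i = n$. In either case, the last zero sits at position $i$.

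It then remains to count the placements of a possible further zero. By \ref{tau-is-three-seqs} we have at most three zeros in total, so the only remaining freedom is whether to insert a third zero at some position $k_1$ with $1 < k_1 < i$. There are exactly $i - 2$ such positions, contributing $i - 2$ sequences, together with the single sequence having no third zero, for a total of $(i - 2) + 1 = i - 1$.

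I do not expect any serious obstacle, since the hard structural work has already been carried out in the previous lemma; the main thing to check carefully is that condition \ref{tau-rotate} imposes no extra restriction on these candidates. Since $a_1 = 0$, the cyclic instance of \ref{tau-descending} at position $1$ (which would relate $a_1$ to $a_n$) is vacuous, and every other instance of \ref{tau-descending} is already guaranteed by the ascending-run construction. Thus each admissible zero-pattern really does produce a $\tau$-sequence, and the identity $\tau(n, 0, n - i) = i - 1$ follows, consistently recovering $\tau(n, 0, 0) = n - 1$ from the previous lemma in the edge case $i = n$.
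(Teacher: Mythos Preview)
Your proof is correct, but it proceeds by a different route than the paper's. The paper argues by truncation: observing that if $a_1 = 0$ and $a_n = \alpha > 0$, then deleting the final entry yields a $\tau$-sequence of length $n-1$ with first entry $0$ and last entry $\alpha - 1$, giving the recursion $\tau(n, 0, \alpha) = \tau(n-1, 0, \alpha-1)$; iterating this down to $\tau(n-\alpha, 0, 0)$ and invoking Lemma~1 finishes immediately. You instead count directly, pinning down the position of the last zero as $i$ and then enumerating the $i-1$ admissible placements for an optional third zero. Your approach is more self-contained (it essentially re-derives the structural content of Lemma~1 rather than reducing to it) and makes the zero-placement combinatorics fully explicit, at the cost of being a bit longer; the paper's reduction is terser and exhibits a clean recursion that could be reused, but leans more heavily on the previous lemma. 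Both are valid and arrive at the same count.
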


\begin{proof}
Suppose that $a_1 a_2 \dots a_n$ is a $\tau$-sequence with $a_1 = 0$. If
$a_n = \alpha > 0$, then we see that $a_1 a_2 \dots a_n$ is a $\tau$-sequence of
length $n$ if, and only if, $a_1 a_2 \dots a_{n - 1}$ is a $\tau$-sequence of
length
$n - 1$. Hence $\tau(n, 0, \alpha) = \tau(n - 1, 0, \alpha - 1)$ for all
$\alpha > 1$. We may repeatedly apply this observation to show that
$\tau(n, 0, \alpha) = \tau(n - \alpha, 0, 0)$.
Hence we have
$\tau(n, 0, n - i) = \tau(n - (n - i), 0, 0) = \tau(i, 0, 0) = i - 1$.
\end{proof}

\begin{proposition}
For $1 \leq i \leq n$, $\tau(n, n - i) = \frac{1}{2}(i^2 - i + 2)$.
\end{proposition}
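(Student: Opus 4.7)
The plan is to count $\tau(n, n - i)$ by directly analysing the cyclic run-structure of a $\tau$-sequence with $a_1 = n - i$. First, using rotation property (iv) to make condition (ii) apply cyclically, I would iteratively force $a_n = n - i - 1$, $a_{n-1} = n - i - 2$, $\ldots$, $a_{i+1} = 0$, determining the $n - i + 1$ positions $\{1, i+1, i+2, \ldots, n\}$ and leaving the positions $\{2, 3, \ldots, i\}$ free. This is essentially the observation used at the start of the proof of Lemma~1, applied cyclically rather than to a block bounded by explicit zeros.

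Next, I would exploit the informal description of a $\tau$-sequence as a concatenation (up to rotation) of at most three ascending runs each starting at $0$. The forced positions above constitute the initial segment of a single such run starting at $a_{i+1} = 0$, and this run may extend further into the free positions. Let $m \in \{0, 1, \ldots, i-1\}$ count the number of free positions absorbed by this run, so $a_j = n - i + j - 1$ for $j = 2, \ldots, m+1$. The remaining $i - m - 1$ free positions $m+2, \ldots, i$ must then form $0$, $1$, or $2$ further runs, each starting at $0$. The case $m = i - 1$ (a single run filling the whole cycle) contributes $1$ configuration. For $0 \le m \le i - 2$ with exactly one additional run, that run is forced to be $0, 1, \ldots, i - m - 2$, giving $1$ configuration per $m$ for a total of $i - 1$. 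For $0 \le m \le i - 3$ with two additional runs, there are $i - m - 2$ possible split points within $\{m+2, \ldots, i\}$, summing to $\binom{i-1}{2}$. Adding yields $1 + (i - 1) + \binom{i-1}{2} = \tfrac{1}{2}(i^2 - i + 2)$.

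The main obstacle is a careful justification of the ``every run starts at $0$'' structure purely from rules (ii)--(iv); I would probably precede the case analysis with a short structural lemma formalising this informal picture, so that the count of ``further runs'' and their shapes is fully rigorous. The edge case $i = 1$ (where $a_1 = n-1$ forces the unique sequence $n-1, 0, 1, \ldots, n-2$) and the case $i = n$ (where $a_1 = 0$, and the result can alternatively be obtained by summing $\tau(n, 0, \beta)$ over $\beta$ using the previous two lemmas) should be verified separately as sanity checks on the general argument.
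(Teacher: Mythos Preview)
Your approach is correct and takes a genuinely different route from the paper's. The paper proceeds by induction on $i$: after the base case $i=1$, it does a case split on $a_2$ --- either $a_2 = 0$ (rotate and apply Lemma~2 to pick up a contribution of $k-1$) or $a_2 = n-(k-1)$ (rotate and apply the inductive hypothesis) --- yielding the recursion $\tau(n, n-k) = (k-1) + \tau(n, n-(k-1))$, which telescopes to the closed form. You instead force the block $a_{i+1}, \ldots, a_n, a_1 = 0,1,\ldots,n-i$ in one shot and then enumerate the $i-1$ free positions $2,\ldots,i$ by whether they carry $0$, $1$, or $2$ further runs, summing $1 + (i-1) + \binom{i-1}{2}$ directly. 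Your parameter $m$ is exactly the depth variable of the paper's induction (each unit of $m$ corresponds to one rotate-and-recurse step there), so your argument is essentially that induction fully unrolled: more self-contained and combinatorially transparent, at the cost of the modularity the paper gains by isolating Lemmas~1 and~2 as reusable statements. The structural lemma you flag --- that every maximal ascending run begins at $0$ --- is precisely the implicit assumption behind the paper's dichotomy on $a_2$, so neither approach has an edge in rigour on that point.
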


\begin{proof}
We proceed by induction on $i$. We start with the case $i = 1$.
To calculate $\tau(n, n - 1)$, let $a_1 a_2 \dots a_n$ be a $\tau$-sequence with
$a_1 = n - 1$. By \ref{tau-rotate} we equivalently have $a_2 a_3 \dots a_n a_1$
is a $\tau$-sequence. Now we may repeatedly apply
\ref{tau-at-least-one} and \ref{tau-descending} to show $a_i = i - 1$, and
that there is a unique possible $\tau$-sequence. Hence $\tau(n, n - 1) = 1$.

Now take $i = k$, with the hypothesis given for $i = k - 1$.
Let $a_1 a_2 \dots a_n$ be a $\tau$-sequence with $a_1 = (n - k)$.
By \ref{tau-descending} we have $a_2 = 0$ or $a_2 = (n - (k - 1))$.
In the first case, we may rotate to get a $\tau$ sequence beginning with 0 and
ending with $n - k$. In the second case, we may rotate to get a
$\tau$-sequence beginning with $n - (k - 1)$. This gives us
\begin{align*}
	\tau(n, n - k) &= \tau(n, 0, n - k) + \tau(n, n - (k - 1)) \\
		&= k - 1 + ((k - 1)^2 - (k - 1) + 2)/2 \\
		&= (k^2 - k + 2)/2. \qedhere
\end{align*}
\end{proof}

We now define a \textit{$\sigma$-sequence} as a sequence of $n = 2k + 1$
integers $a_1 a_2 \dots a_n$ such that
\begin{enumerate}[label=\textnormal{(\roman*)}]
	\item \label{sigma-at-least-zero}
		$a_i \geq 0$,
	\item \label{sigma-one-location}
		if $a_i = 0$ then $a_{i + (k + 1)} = 1$,
	\item \label{sigma-one-location-two}
		if $a_i = 1$ then either $a_{i - 1} = 0$ or
		$a_{i + k} = 0$,
	\item \label{sigma-descending}
		if $a_i > 1$ then $a_{i - 1} = a_i - 1$,
	\item \label{sigma-is-three-seqs}
		there are at most three $i$ such that $a_i = 0$,
	\item \label{sigma-rotate}
		if $a_1 a_2 \dots a_n$ is a $\sigma$-sequence then
		$a_n a_1 a_2 a_3 \dots a_{n - 1}$ must also be a
		$\sigma$-sequence.
\end{enumerate}
As this definition is not as readily visualisable as that of $\tau$-sequences,
we give as examples each sequence for $n \in \{ 9, 11 \}$ without rotations.
{ 
\newcommand{\f}[1]{\textcolor{red}{#1}}
\newcommand{\g}[1]{\textcolor{blue}{#1}}
\newcommand{\h}[1]{\textcolor{green}{#1}}
\begin{table}[H]
\centering
\begin{tabular}{l|l}
$n = 9$   & $n = 11$    \\ \hline
\f{01}234\f{1}234                 & \f{01}2345\f{1}2345 \\
\f{01}\g{01}2\f{1}2\g{1}2         & \f{01}\g{01}23\f{1}2\g{1}23 \\
\f{01}2\g{01}\f{1}23\g{1}         & \f{01}2\g{01}2\f{1}23\g{1}2 \\
\f{001}23\f{11}23                 & \f{01}23\g{01}\f{1}234\g{1} \\
\f{001}\g{01}\f{11}2\g{1}         & \f{001}234\f{11}234 \\
\f{001}\g{1}2\f{11}\g{01}         & \f{001}\g{01}2\f{11}2\g{1}2 \\
\f{01}\h{1}\g{01}\f{1}\h{01}\g{1} & \f{001}2\g{01}\f{11}23\g{1} \\
\f{0001}2\f{111}2                 & \f{001}\g{1}23\f{11}\g{01}2 \\
                                  & \f{001}2\g{1}2\f{11}2\g{01} \\
                                  & \f{01}\g{01}\h{01}\f{1}2\g{1}2\h{1} \\
                                  & \f{01}\h{1}\g{01}2\f{1}\h{01}\g{1}2 \\
                                  & \f{0001}23\f{111}23 \\
\end{tabular}
\end{table}
} 
In our table we have highlighted a pattern made by the 0s and 1s in these
sequences which we aim to formalise and prove. The patterns of 0s and 1s are of
the following forms
\[
	01\underbrace{\dots}_{k - 1}1\underbrace{\dots}_{k - 1},
	\quad
	001\underbrace{\dots}_{k - 2}11\underbrace{\dots}_{k - 2}
	\quad \text{or} \quad
	0001\underbrace{\dots}_{k - 3}111\underbrace{\dots}_{k - 3}.
\]
We shall call these patterns \textit{01-groups}. We aim to show that
each 0 or 1 in a $\sigma$-sequence occurs in a unique 01-group.
In the following, suppose that $a_1 a_2 \dots a_n$ is a $\sigma$-sequence with
$a_1 = 0$ and $a_n \neq 0$.

\begin{lemma}
\label{01_group_from_zero}
There is some $1 \leq \alpha \leq 3$ such that for $1 \leq i \leq \alpha$
we have $a_i = 0, a_{i + (k + 1)} = 1$ and $a_{\alpha + 1} = 1$.
\end{lemma}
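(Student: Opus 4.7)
The plan is to let $\alpha$ be the length of the initial run of zeros starting at position 1, that is, the largest integer such that $a_1 = a_2 = \cdots = a_\alpha = 0$. The hypothesis $a_n \neq 0$ ensures this run does not wrap around the cyclic end of the sequence, so $\alpha$ is well-defined and at least 1.

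First I would bound $\alpha$ from above using property \ref{sigma-is-three-seqs}: if $\alpha \geq 4$ then $a_1, a_2, a_3, a_4$ would give four distinct indices carrying a zero, contradicting the assumption that at most three such indices exist. Hence $1 \leq \alpha \leq 3$.

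Next, I would verify that $a_{\alpha+1} = 1$. By maximality of $\alpha$ we already know $a_{\alpha+1} \neq 0$, and if $a_{\alpha+1} > 1$ then property \ref{sigma-descending} would force $a_\alpha = a_{\alpha+1} - 1 \geq 1$, contradicting $a_\alpha = 0$. Combined, this gives $a_{\alpha+1} = 1$. The equalities $a_{i + (k + 1)} = 1$ for $1 \leq i \leq \alpha$ are then immediate from property \ref{sigma-one-location} applied to each zero at position $i$.

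I do not anticipate a significant obstacle here; the statement is essentially a direct unpacking of the defining properties of $\sigma$-sequences. The only place to be careful is the cyclic interpretation of the indices coming from property \ref{sigma-rotate}, but the hypothesis $a_n \neq 0$ eliminates the single wrap-around scenario that could otherwise complicate the definition of $\alpha$.
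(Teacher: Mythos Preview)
Your proposal is correct and follows essentially the same approach as the paper: define $\alpha$ as the maximal initial run of zeros, bound it by three using \ref{sigma-is-three-seqs}, force $a_{\alpha+1}=1$ via \ref{sigma-at-least-zero} and \ref{sigma-descending}, and read off $a_{i+(k+1)}=1$ from \ref{sigma-one-location}. Your write-up is slightly more explicit about why $\alpha$ is well-defined (using $a_1=0$ and $a_n\neq 0$), but the argument is the same.
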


\begin{proof}
We let $\alpha$ be the largest number such that $a_i = 0$ for
$1 \leq i \leq \alpha$. From \ref{sigma-is-three-seqs} we see that
$\alpha \leq 3$.
Hence, combining \ref{sigma-at-least-zero},
\ref{sigma-descending} and our definition of $\alpha$, we see that
$0 \leq a_{\alpha + 1} < 2$ and $a_{\alpha + 1} \neq 0$, hence we must have
$a_{\alpha + 1} = 1$. Finally, we may apply \ref{sigma-one-location} and the
fact $a_i = 0$ to show $a_{i + (k + 1)} = 1$.
\end{proof}

\begin{corollary}
Every 0 in a $\sigma$-sequence is in a unique 01-group.
\end{corollary}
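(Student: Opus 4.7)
The plan is to deduce the corollary directly from Lemma~\ref{01_group_from_zero} by a rotation, and then handle uniqueness combinatorially.

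Given any $0$ in a $\sigma$-sequence, I would first identify the cyclically maximal run of consecutive $0$s containing it; by \ref{sigma-is-three-seqs} this run has some length $\alpha \in \{1,2,3\}$, and because there are at most three zeros in a sequence of length $n = 2k + 1$ (and we may freely assume $n$ is large enough that not all entries are $0$) the entry immediately preceding the run, cyclically, is non-zero. Applying property \ref{sigma-rotate} repeatedly to shift this run so that it begins at position $1$ yields a $\sigma$-sequence with $a_1 = 0$ and $a_n \neq 0$, which is exactly the hypothesis of Lemma~\ref{01_group_from_zero}. That lemma then supplies the entire 01-group structure of length $\alpha$ attached to the chosen $0$, which proves existence.

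For uniqueness, I would argue that any 01-group containing the chosen $0$ must be organised around a block of consecutive $0$s that includes it; by inspection of the three admissible 01-group templates, this block of $0$s is exactly the maximal cyclic run of $0$s through our chosen $0$. Since the length of this run uniquely determines $\alpha$, and since $\alpha$ together with the starting position of the run determines the entire pattern of $0$s and $1$s forming the 01-group, no two distinct 01-groups can contain the same $0$.

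The main obstacle I anticipate is really just the index bookkeeping in the rotation step, together with verifying that the entries cyclically adjacent to the maximal run are genuinely non-zero; both follow directly from the definition of ``maximal run'', so this should be routine rather than conceptually difficult. One minor point to double-check is the degenerate situation when the run of $0$s has length $\alpha = 3$ and the sequence is short, to ensure that the $\alpha$ ones guaranteed at positions $k + 2, \dots, k + 1 + \alpha$ do not cyclically collide with the initial run of $0$s; this is excluded by the standing assumption $n = 2k + 1$ with $k$ large enough for an $\alpha = 3$ 01-group to fit.
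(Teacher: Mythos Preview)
Your proposal is correct and follows essentially the same route as the paper: rotate so that the maximal run of zeros begins at position~$1$ (equivalently, the paper rotates the chosen zero to position $1$, $2$, or $3$ until $a_n \neq 0$), then invoke Lemma~\ref{01_group_from_zero}. Your explicit treatment of uniqueness via the maximal run is a welcome addition, as the paper's proof leaves this implicit; your concern about the $\alpha = 3$ collision for small $k$ is handled automatically, since property~\ref{sigma-one-location} already forces a contradiction if three consecutive zeros occur when $k = 2$, so no separate size assumption is needed.
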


\begin{proof}
Consider a $\sigma$-sequence with $a_i = 0$ for some $i$. Using
\ref{sigma-rotate} we may consider a rotation of this sequence which moves
this 0 from $i$ to 1, and then possibly further rotate $a_1$ to $a_2$ or $a_3$
until $a_n \neq 0$. Then we may apply the previous lemma.
\end{proof}

\begin{lemma}
Every 1 in a $\sigma$-sequence is in a unique 01-group.
\end{lemma}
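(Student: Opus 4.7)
The plan is to mirror the approach used for the preceding corollary. I characterise an $01$-group by a pair $(c, \alpha)$ with $1 \leq \alpha \leq 3$: its zeros sit at positions $c, c+1, \ldots, c+\alpha-1$ and its ones at $c+\alpha, c+k+1, c+k+2, \ldots, c+k+\alpha$ (all indices taken mod $n$). So I must show that for each $1$ at position $j$ there is exactly one pair $(c, \alpha)$ with $j \in \{c+\alpha\} \cup \{c+k+1, \ldots, c+k+\alpha\}$ such that this list of positions really is an $01$-group of the sequence.

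For existence, apply property \ref{sigma-one-location-two} to the $1$ at $j$, giving $a_{j-1} = 0$ or $a_{j+k} = 0$. In the first case, the preceding corollary hands us the $01$-group $(c, \alpha)$ containing the $0$ at $j-1$; writing $j-1 = c+s$ with $0 \leq s \leq \alpha-1$, observe that if $s < \alpha-1$ then $a_{j} = a_{c+s+1}$ still sits in the zero-block and equals $0$, a contradiction. Hence $s = \alpha-1$ and $j = c+\alpha$ is the initial $1$ of the $01$-group. In the second case, the corollary produces an $01$-group $(c, \alpha)$ whose zero-block contains $j+k$ at some position $c+s$; using $n = 2k+1$, so $-k \equiv k+1 \pmod{n}$, one computes $j \equiv c + k + (s+1) \pmod n$, which is one of the trailing-$1$ positions $c+k+1, \ldots, c+k+\alpha$.

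For uniqueness, suppose the $1$ at $j$ belongs to two $01$-groups $G_1$ and $G_2$. In each $G_\ell$, $j$ is either the initial $1$ or a trailing $1$. If it is the initial $1$ in both, then both groups contain the $0$ at $j-1$, and the preceding corollary forces $G_1 = G_2$. If it is a trailing $1$ in both, then the computation above shows both groups contain the $0$ at $j+k$, giving $G_1 = G_2$ once more. The remaining possibility --- $j$ is the initial $1$ of $G_1$ and a trailing $1$ of $G_2$ --- requires both $a_{j-1} = 0$ and $a_{j+k} = 0$; however, the existence argument applied to $G_1$ shows that $G_1$'s last trailing $1$ lies at position $j+k$, so $a_{j+k} = 1$, a contradiction.

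The main obstacle is the bookkeeping in the second existence case, where the identity $-k \equiv k+1 \pmod{2k+1}$ is what makes the trailing-$1$ positions of the $01$-group line up with $j$. Everything else is a mechanical application of property \ref{sigma-one-location-two} and the preceding corollary, together with the explicit shape of an $01$-group.
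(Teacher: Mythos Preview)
Your proof is correct and follows essentially the same approach as the paper: split on the two cases from property \ref{sigma-one-location-two}, locate the relevant $01$-group via the preceding corollary, and rule out the mixed case to obtain uniqueness. The paper is terser, deriving the mixed-case contradiction directly from property \ref{sigma-one-location} (if $a_{j-1}=0$ then $a_{j+k}=1$) rather than via the structure of $G_1$, but the idea is the same.
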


\begin{proof}
Applying \ref{sigma-one-location-two}, we have two possibilities if $a_i = 1$.
In the first possibility, $a_{i - 1} = 0$. In this case,
$a_{i - 1 + (k + 1)} = a_{i + k} = 1$ by \ref{sigma-one-location}. Hence, the
second possibility that $a_{i + k} = 0$ is mutually exclusive with the first. In
the first possibility, we may use Lemma~\ref{01_group_from_zero} to find the
01-group from $a_{i - 1}$ which contains $a_i$, and in the second possibility we
may do the same but from $a_{i + k}$ instead of $a_{i - 1}$.
\end{proof}

Let $\sigma(i, n)$ where $n = 2k + 1$ be the number of $\sigma$-sequences
of length $n$ with $a_1 = i$.

\begin{lemma}
$\sigma(k, n) = 2$.
\end{lemma}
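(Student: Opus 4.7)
The plan is to use the cyclic descending rule \ref{sigma-descending} (combined with \ref{sigma-rotate}) to pin down most of the sequence, then branch via \ref{sigma-one-location-two} into two mutually exclusive cases whose completions are each uniquely determined. Starting from $a_1 = k$, cyclic application of \ref{sigma-descending} forces $a_n = k - 1$, $a_{n - 1} = k - 2$, and so on down to $a_{n - k + 2} = a_{k + 3} = 1$. Thus the $k$ positions $k + 3, k + 4, \ldots, 2k + 1, 1$ carry the ascending values $1, 2, \ldots, k - 1, k$, leaving the $k + 1$ positions $a_2, a_3, \ldots, a_{k + 2}$ to be determined.

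Next I will apply \ref{sigma-one-location-two} to $a_{k + 3} = 1$, yielding two possibilities: either $a_{k + 2} = 0$, or (since $(k + 3) + k \equiv 2 \pmod{n}$) $a_2 = 0$. These cases are mutually exclusive, as \ref{sigma-one-location} applied to $a_{k + 2} = 0$ would force $a_2 = 1$. In Case A ($a_2 = 0$), I claim $a_j = j - 2$ for $2 \leq j \leq k + 2$, proceeding by induction on $j$: the value $0$ is ruled out by \ref{sigma-one-location} (since $a_{j + k + 1}$ lies in the fixed tail and carries a value unequal to $1$), the value $1$ is ruled out by \ref{sigma-one-location-two} (since neither $a_{j - 1}$ nor $a_{j + k}$ is zero), and \ref{sigma-descending} then forces $a_j = a_{j - 1} + 1$. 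Case B ($a_{k + 2} = 0$, whence $a_2 = 1$) is handled symmetrically, giving $a_j = j - 1$ for $2 \leq j \leq k + 1$. Each case yields a single sequence, so $\sigma(k, n) = 2$.

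The main obstacle is the careful verification of the inductive exclusions: one must track which tail positions are hit by the index arithmetic modulo $n$ to ensure each exclusion gives a genuine contradiction. Some care is also needed for small $k$, where the fixed tail becomes short and certain exclusions must be argued as vacuously satisfied or checked directly.
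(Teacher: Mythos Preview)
Your argument is correct and complete in its essentials: the cyclic descent from $a_1=k$ pins down positions $k+3,\ldots,n,1$, the branch via \ref{sigma-one-location-two} at $a_{k+3}=1$ is exhaustive and mutually exclusive, and your inductive exclusions of $0$ and $1$ in each case check out (including the boundary indices where $j+k+1$ or $j+k$ wraps around to position $1$ or $2$). The only thing left implicit is the routine verification that the two resulting sequences actually satisfy all of \ref{sigma-at-least-zero}--\ref{sigma-rotate}, giving the lower bound $\sigma(k,n)\ge 2$; this is straightforward since both are rotations of $0,1,\ldots,k,1,2,\ldots,k$.

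Your route differs from the paper's. The paper first rotates so that $a_k=k$, descends to $a_1=1$, and then invokes the \emph{01-group} lemmas proved just before this statement: the block $a_2,\ldots,a_k$ consists of values $>1$ and hence lies outside any 01-group, which forces the single-zero 01-group pattern $01\underbrace{\ldots}_{k-1}1\underbrace{\ldots}_{k-1}$ to be the only one present; from this the base sequence is read off, and one observes it contains $k$ exactly twice. Your argument is more elementary and self-contained, working directly from axioms \ref{sigma-one-location}, \ref{sigma-one-location-two}, and \ref{sigma-descending} without appealing to the 01-group classification. What the paper's approach buys is brevity once the 01-group machinery is in place; what yours buys is independence from that machinery and a cleaner separation into two explicit cases that directly exhibit the two sequences.
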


\begin{proof}
If $a_1 = k$ in a $\sigma$-sequence, by \ref{sigma-rotate} we may consider a
rotation such that $a_k = k$. Repeatedly applying \ref{sigma-descending} we may
show that for $i \geq 1$ we have $a_i = i$. For $2 \leq i \leq k$, we have
$a_i > 1$, and hence we have a block of $k - 1$ numbers in our sequence not in a
01-group. Hence, the only possible 01-group in the sequence is
$01\underbrace{\dots}_{k - 1}1\underbrace{\dots}_{k - 1}$. As $a_1 = 1$, and
each occurence of 1 is in a 01-group, we must have this 01-group in our
sequence and no other 01-group may be in this sequence.
We may now consider rotating our $\sigma$-sequence again so that $a_1 = 0$. Now
we may apply \ref{sigma-descending} to show $a_i = i$ and $a_{i + (k + 1)} = i$
for all $2 \leq i \leq k$, and this is the only $\sigma$-sequence containing $k$
up to rotation. Finally, we may rotate this sequence in two ways to make
$a_1 = k$, hence $\sigma(k, n) = 2$.
\end{proof}

\begin{lemma}
$\sigma(0, n) \geq 3$.
\end{lemma}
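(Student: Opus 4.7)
The plan is to exhibit three distinct $\sigma$-sequences of length $n = 2k+1$ starting with $0$, one realising each of the three 01-group shapes identified before the table. Explicitly, I would take
\begin{align*}
s^{(1)} &= 0, 1, 2, \ldots, k,\ 1, 2, \ldots, k,\\
s^{(2)} &= 0, 0, 1, 2, \ldots, k-1,\ 1, 1, 2, \ldots, k-1,\\
s^{(3)} &= 0, 0, 0, 1, 2, \ldots, k-2,\ 1, 1, 1, 2, \ldots, k-2,
\end{align*}
each of which has length $2k+1$, begins with $0$, and contains exactly one 01-group.

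The first step is to verify that each $s^{(j)}$ is a $\sigma$-sequence. Condition \ref{sigma-at-least-zero} is immediate and \ref{sigma-is-three-seqs} holds because $s^{(j)}$ has precisely $j \leq 3$ zeros. Condition \ref{sigma-descending} follows from the ascending nature of the two runs of integers following the initial block of zeros. Conditions \ref{sigma-one-location} and \ref{sigma-one-location-two} reduce to checking that the positions of the zeros and ones match the 01-group template: for each zero at position $i$ the entry at $i + (k+1)$ is a one, and each one sits either immediately after a zero or exactly $k$ positions before a zero. This is a direct index computation from the definition of $s^{(j)}$. Finally, \ref{sigma-rotate} is automatic because the $\sigma$-sequence axioms are themselves rotation-invariant.

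The second step is to observe that $s^{(1)}, s^{(2)}, s^{(3)}$ are pairwise distinct, since they contain $1$, $2$, and $3$ zeros respectively; hence they contribute three different sequences to the count $\sigma(0, n)$.

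The main obstacle will be handling small values of $k$ where $s^{(3)}$ (or even $s^{(2)}$) ceases to make literal sense as written, for instance when $k \leq 2$. For those boundary cases I would supplement the construction by invoking \ref{sigma-rotate}: any rotation of an established $\sigma$-sequence which begins with $0$ is itself a valid candidate, and one checks that taking rotations of $s^{(2)}$ (such as $01110$ for $k=2$) provides the missing third sequence. This reduces the degenerate small-$k$ cases to a short finite verification, leaving the general $k \geq 3$ case handled uniformly by the three explicit sequences above.
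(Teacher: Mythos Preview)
Your proposal is correct. Both you and the paper argue by exhibiting explicit $\sigma$-sequences beginning with $0$, and both handle the small-$k$ boundary by a finite check (you even land on exactly the paper's three sequences $00111$, $01110$, $01212$ for $k=2$). The execution differs slightly: the paper constructs only the single sequence $s^{(3)}$ (the one with three zeros) for $k\geq 3$ and then obtains three sequences starting with $0$ by rotating it so that each of its three zeros in turn occupies position~$1$, whereas you build three structurally distinct sequences $s^{(1)},s^{(2)},s^{(3)}$, one for each 01-group shape. The paper's route is marginally more economical, since only one template needs to be verified against the axioms; your route is arguably more transparent, since the three witnesses are distinguished by their zero-count rather than by which rotation was applied. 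Either way the verification is a short index computation, and your remark that condition~\ref{sigma-rotate} is automatic once \ref{sigma-at-least-zero}--\ref{sigma-is-three-seqs} are checked with indices read modulo $n$ is exactly the right observation.
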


\begin{proof}
For $k \geq 3$ we consider the $\sigma$-sequence with $a_1 = a_2 = a_3 = 0$,
$a_4 = a_{k + 2} = a_{k + 3} = a_{k + 4} = 1$ and all other $a_i$ filled in
using \ref{sigma-descending}. This sequence may be rotated to give $a_1 = 0$ in
three different ways, hence in this case $\sigma(0, n) \geq 3$.

For $k = 2$, we consider the $\sigma$-sequences 00111, 01110 and 01212 to see
$\sigma(0, n) \geq 3$.
\end{proof}

\begin{lemma}
$\sigma(i, n) < \sigma(i - 1, n)$ for $1 < i \leq k$.
\end{lemma}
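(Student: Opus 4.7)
The plan is to construct a natural injection from the set of $\sigma$-sequences with $a_1 = i$ into the set with $a_1 = i - 1$, and then exhibit one element of the target set that lies outside the image.

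Define $\phi(a_1, a_2, \ldots, a_n) = (a_n, a_1, \ldots, a_{n-1})$. Since $a_1 = i > 1$, applying \ref{sigma-rotate} together with \ref{sigma-descending} forces $a_n = i - 1$, so $\phi(a)$ is a $\sigma$-sequence beginning with $i - 1$; injectivity is immediate since $\phi$ is a cyclic rotation. A direct unwinding shows that $b$ lies in the image of $\phi$ exactly when $b_2 = i$; moreover, since $b_1 = i - 1 \ge 1$, applying \ref{sigma-descending} (via a further rotation) shows $b_2 \in \{0, 1, i\}$. Hence non-surjectivity reduces to producing a single $\sigma$-sequence $b$ with $b_1 = i - 1$ and $b_2 \in \{0, 1\}$.

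For $2 \le i < k$ I plan to write down the witness directly, placing zeros at positions $1$ and $i + 1$ and filling the remaining positions with ascending blocks that terminate at the matched $1$s forced by \ref{sigma-one-location}. This gives the $\sigma$-sequence
\[
0,\, 1,\, \ldots,\, i - 1,\; 0,\; 1,\, \ldots,\, k - i,\; 1,\, 2,\, \ldots,\, i,\; 1,\, 2,\, \ldots,\, k - i;
\]
rotating so that position $i$ becomes position $1$ yields $b$ with $b_1 = i - 1$ and $b_2 = 0$. The substantive verification is confirming each of the six $\sigma$-sequence axioms, in particular \ref{sigma-one-location} at the two zeros and \ref{sigma-one-location-two} at the two ``restart'' $1$s at positions $k + 2$ and $i + k + 2$; since neither restart is preceded by a zero, each must be justified via the zero at distance $+k$ ahead, and this is where the bulk of the bookkeeping lies.

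The case $i = k$ has to be treated separately because zeros at positions $1$ and $k + 1$ together would demand $a_1 = 1$ via \ref{sigma-one-location}. For this I would instead use the sequence $0, 0, 1, 2, \ldots, k - 1, 1, 1, 2, \ldots, k - 1$, in which the value $k - 1$ at position $2k + 1$ is cyclically followed by the $0$ at position $1$; rotating yields a witness $b$ with $b_1 = k - 1$ and $b_2 = 0$. With a witness supplied in every case, $\phi$ is injective but not surjective, giving $\sigma(i, n) < \sigma(i - 1, n)$.
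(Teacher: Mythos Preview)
Your proof is correct and follows the same strategy as the paper: both use the cyclic rotation $\phi$ as the injection, reduce non-surjectivity to exhibiting a $\sigma$-sequence with $a_1=i-1$ followed by an entry $\neq i$, and split into the cases $i<k$ and $i=k$. For $2\le i<k$ your witness with zeros at positions $1$ and $i+1$ is exactly the paper's. For $i=k$ the paper instead places zeros at positions $1$ and $k$ (yielding $a_{n-1}=k-1$, $a_n=1$), while you place them at positions $1$ and $2$; both are valid $\sigma$-sequences appearing in the paper's example table, so this is only a cosmetic difference.
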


\begin{proof}
Consider the map $\phi$ which takes a $\sigma$-sequence $a_1 a_2 \dots a_n$ to
$a_n a_1 a_2 \dots a_{n - 1}$. If $a_1 = i$, then $a_n = i - 1$ by
\ref{sigma-descending}, hence $\phi$ is an injective map from $\sigma$-sequences
starting with $i$ to those starting with $i - 1$. Hence, to show
$\sigma(i, n) < \sigma(i - 1, n)$ we need only find a $\sigma$-sequence with
$a_1 = i - 1$ and $a_2 \neq i$.

For $i \leq k - 1$, the sequence with
$a_1 = 0, a_2 = a_{k + 2} = 1, a_{i + 1} = 0, a_{i + 2} = a_{i + k + 2} = 1$ and
all other $a_j$ satisfying $a_j = a_{j - 1} + 1$ is a $\sigma$-sequence with
$a_i = i - 1$ and $a_{i + 1} = 0 \neq i$. Hence, we can take a rotation of this
by \ref{sigma-rotate} to find a $\sigma$-sequence with $a_1 = i - 1$ and
$a_2 \neq i$.

Now, for $i = k$, we consider the sequence
$a_1 = 0, a_2 = a_{k + 2} = 1, a_{k - 1} = 0, a_k = 1$ and $a_n = 1$, and all
other $a_j$ satisfying $a_j = a_{j - 1} + 1$. We see this is a $\sigma$-sequence
with $a_{n - 1} = i - 1$ and $a_n = 1 \neq i$. Hence, again we can take a
rotation of this by \ref{sigma-rotate} to find a $\sigma$-sequence with
$a_1 = i - 1$ and $a_2 \neq i$.
\end{proof}

\section{Word Graphs}

To facilitate our discussion of the G\'{o}mez graphs we first introduce the
notion of a \textit{word graph} and word graph \textit{families},
which form a natural generalisation of the
construction of the G\'{o}mez and Faber-Moore-Chen graphs
(found in \cite{gomez} and \cite{faber-moore-chen} respectively).

To define a word graph, fix some number $n$, the \textit{word length},
some set $\Pi_n \subseteq S_n$, the \textit{rules}, and
some $m > n$, the \textit{alphabet size}. We define the word graph
$G_m = \langle V, E \rangle$ as follows. Fix some arbitrary set $B$
such that $|B| = m$, let
$V = \{ x_1 x_2 \dots x_n | x_i \in B, x_i = x_j \Leftrightarrow i = j \}$, that
is the vertices of $G_m$ are the words of length $n$ on $B$ all of whose letters
are distinct, and we form the directed adjacencies of $G_m$ by the following
rules
\[
	x_1 x_2 \dots x_n \rightarrow \begin{cases}
		x_2 x_3 \dots x_n y,
			& y \in B \setminus \{ x_1, x_2, \dots, x_n \}, \\
		x_{\pi(1)} x_{\pi(2)} \dots x_{\pi(n)}, & \pi \in \Pi_n.
	\end{cases}
\]
We define the word graph family of $\Pi_n$ to be
$\{ G_{n + 1}, G_{n + 2}, \dots \}$
and denote it by $\mywgp{\Pi_n}$. For the following, let $\Pi_n$ be an arbitrary
rule set and $G_m \in \mywgp{\Pi_n}$.

We will refer to the rules of the form
$x_1 x_2 \dots x_n \rightarrow x_2 x_3 \dots x_n y$ for $y \not\in \{ x_i \}$ as
\textit{alphabet changing} and rules of the form
$x_1 x_2 \dots x_n \rightarrow x_{\pi(1)} x_{\pi(2)} \dots x_{\pi(n)}$ as
\textit{alphabet fixing}. For a vertex $v = x_1 x_2 \dots x_n$, we shall define
$\alpha$ by $\alpha(v) = \{ x_1, x_2, \dots, x_n \}$ and refer to $\alpha(v)$ as
the \textit{alphabet} of $v$.

\begin{lemma}
\label{all_at_least_n}
For all $m \geq 2n$ we have $\mydiam{G_m} \geq n$.
\end{lemma}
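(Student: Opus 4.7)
The plan is to exhibit two vertices $v, w \in G_m$ with disjoint alphabets and show that any directed walk from $v$ to $w$ must have length at least $n$. Such vertices exist because $m \geq 2n$: partition any $2n$ letters of $B$ into two disjoint $n$-subsets $A$ and $C$, and let $v$ and $w$ be arbitrary orderings of $A$ and $C$ respectively. Then $\alpha(v) = A$, $\alpha(w) = C$, and $\alpha(v) \cap \alpha(w) = \emptyset$.

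The key observation is how the alphabet transforms along each type of edge. An alphabet-fixing edge leaves $\alpha$ unchanged, since it merely permutes the letters of the word. An alphabet-changing edge $x_1 x_2 \dots x_n \to x_2 \dots x_n y$ removes exactly one letter (namely $x_1$) from the current alphabet and adds one new letter. So along a walk $v = v_0 \to v_1 \to \cdots \to v_k = w$, each step changes $\alpha(v_i)$ by deleting at most one element.

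I then argue that every letter of $\alpha(v)$ must be deleted somewhere along the walk. If some $x \in \alpha(v)$ were never the first letter of any $v_i$ at an alphabet-changing step, then $x$ would persist in $\alpha(v_i)$ for every $i$, contradicting $x \notin \alpha(w) = \alpha(v_k)$. Each alphabet-changing step deletes exactly one letter, so to account for the deletion of all $n$ elements of $\alpha(v)$ we need at least $n$ alphabet-changing steps, and hence the walk has length at least $n$. Therefore $\mydist{v}{w} \geq n$, and so $\mydiam{G_m} \geq n$.

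I do not anticipate any real obstacle here: the only thing to watch is that one cannot "delete two letters of $\alpha(v)$ in one step" via re-addition tricks, but this is immediate from the fact that every edge changes the alphabet by at most one element. The argument uses nothing about the particular rule set $\Pi_n$, which is why the bound holds uniformly over the whole word graph family.
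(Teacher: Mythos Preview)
Your proof is correct and follows essentially the same approach as the paper: pick two vertices with disjoint alphabets (possible since $m \geq 2n$) and use that each edge changes the alphabet by at most one element. The only cosmetic difference is that the paper counts \emph{additions} of letters from the target alphabet (each step introduces at most one letter not in $\alpha(u)$, and all $n$ letters of $\alpha(v)$ must eventually appear), whereas you count \emph{deletions} of letters from the source alphabet; these are dual versions of the same bound.
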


\begin{proof}
Letting $B = \{ x_1, x_2, \dots, x_n, y_1, y_2, \dots, y_n \}$ we consider any
path from $u = x_1 x_2 \dots x_n$ to $v = y_1 y_2 \dots y_n$. As each rule of
$G_m$ introduces at most one new letter which is not in $\alpha(u)$, and each
letter of $\alpha(v)$ is not in $\alpha(u)$, we must have at least
$|\alpha(v)| = n$ rules in a path from $u$ to $v$.
\end{proof}

\begin{lemma}
\label{path_of_double_length}
For all $m \geq 3n$ we have $\mydiam{G_m} \leq 2n$.
\end{lemma}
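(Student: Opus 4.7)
The plan is to exhibit, for any ordered pair of vertices $u$ and $v$ of $G_m$, a directed path of length exactly $2n$ from $u$ to $v$ using only alphabet-changing rules, factoring through a carefully chosen intermediate vertex $w$ whose alphabet is disjoint from both $\alpha(u)$ and $\alpha(v)$. The hypothesis $m \geq 3n$ is precisely what guarantees that such a $w$ exists: since $|\alpha(u) \cup \alpha(v)| \leq 2n$, the set $C = B \setminus (\alpha(u) \cup \alpha(v))$ has at least $n$ elements, so we may fix any choice of distinct letters $c_1, c_2, \dots, c_n \in C$ and set $w = c_1 c_2 \dots c_n$.

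To go from $u = x_1 x_2 \dots x_n$ to $w$ in $n$ steps, I would apply on step $i$ the alphabet-changing rule that appends $c_i$. After step $i$ the current word is $x_{i + 1} x_{i + 2} \dots x_n c_1 c_2 \dots c_i$, and the appended letter $c_i$ is legal because $c_i \in C$ differs from every letter of $\alpha(u)$ and from the previously appended $c_j$ for $j < i$. To go from $w$ to $v = y_1 y_2 \dots y_n$ in a further $n$ steps, I would similarly apply on step $i$ the alphabet-changing rule that appends $y_i$: the letter $y_i$ is legal because $y_i \in \alpha(v)$ is disjoint from $C$ and so differs from every remaining $c_j$, while distinctness from the earlier $y_j$ is automatic since the letters of $v$ are pairwise distinct.

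The main obstacle is really only the bookkeeping required to verify that the alphabet-changing rule can be invoked at every step; all the substance of the argument is concentrated in the choice of $w$, which sidesteps the failure mode of a direct $n$-step path (for example, if the first letter of $v$ coincides with some $x_j$ for $j > 1$, no single sequence of alphabet changes out of $u$ can leave that letter in the right slot of $v$). Note that this construction ignores the rule set $\Pi_n$ entirely, so the bound holds uniformly across every word graph family satisfying the size hypothesis.
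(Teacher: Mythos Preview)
Your proof is correct and is essentially identical to the paper's argument: both choose an intermediate vertex $w$ with $\alpha(w)\subseteq B\setminus(\alpha(u)\cup\alpha(v))$ (possible since $m\geq 3n$) and reach it from $u$, then reach $v$ from it, by $n$ consecutive alphabet-changing rules each. Your write-up actually spells out the legality check for each appended letter more carefully than the paper does.
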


\begin{proof}
Consider $u = x_1 x_2 \dots x_n, v = y_1 y_2 \dots y_n \in \myvert{G_m}$, and
let
$\{ z_1, z_2, \dots, z_n \} \subseteq B \setminus (\alpha(u) \cup \alpha(v))$.
Letting $w = z_1 z_2 \dots z_n$, we can create a path of length $n$ from $u$ to
$w$ by using the alphabet changing rule to append $z_i$ at the
$i$\textsuperscript{th} step in the path. This is always possible as
$\alpha(u) \cap \alpha(w) = \emptyset$. We then may form another such path of
length $n$ from $w$ to $v$ by the same logic. Concatenating these two paths
gives us a path of length $2n$ from $u$ to $v$.
\end{proof}

\begin{lemma}
\label{eventual_diameter}
For all $m \geq 4n$ we have $\mydiam{G_m} = \mydiam{G_{4n}}$.
\end{lemma}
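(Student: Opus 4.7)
My plan is to prove the slightly stronger assertion that $\mydiam{G_m} = \mydiam{G_{m'}}$ for any $m' \geq m \geq 3n$, which immediately implies the claim since $4n \geq 3n$ and the common value can then be identified with $\mydiam{G_{4n}}$. The driving observation is that any permutation $\sigma$ of the alphabet $B$ induces an automorphism of $G_m$, because both the alphabet-changing and alphabet-fixing rules are defined symmetrically in the letters of $B$. Consequently, whenever we realise $B_m$ as a subset of $B_{m'}$, the graph $G_m$ sits inside $G_{m'}$ as the induced subgraph on vertices with alphabet in $B_m$, and the full symmetric group on $B_{m'}$ acts on $G_{m'}$ by automorphisms.

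To show $\mydiam{G_m} \geq \mydiam{G_{m'}}$, I would take a diametric pair $(u', v')$ in $G_{m'}$. The set $\alpha(u') \cup \alpha(v')$ has at most $2n$ letters, which fits inside $B_m$ since $m \geq 2n$; applying a suitable letter permutation carries $(u', v')$ to a pair $(u, v) \in G_m$ at the same distance in $G_{m'}$. Since distances in an induced subgraph are at least the distances in the ambient graph, the distance from $u$ to $v$ in $G_m$ is at least $\mydiam{G_{m'}}$.

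For the reverse inequality, I would take a diametric pair $(u, v)$ in $G_m$ and examine a shortest path $P$ from $u$ to $v$ in $G_{m'}$. Lemma~\ref{path_of_double_length} gives $|P| \leq \mydiam{G_{m'}} \leq 2n$, and each edge of $P$ introduces at most one new letter (alphabet-changing rules contribute one, alphabet-fixing rules zero), so $P$ uses at most $n + 2n = 3n$ distinct letters in total. With $m \geq 3n$, I can choose a permutation of $B_{m'}$ fixing $\alpha(u) \cup \alpha(v)$ pointwise and sending the other letters of $P$ into $B_m$; the resulting automorphism carries $P$ to an equally long path from $u$ to $v$ lying entirely in $G_m$. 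Hence the distance from $u$ to $v$ in $G_m$ is at most $|P| \leq \mydiam{G_{m'}}$.

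The main conceptual obstacle, I expect, is recognising that a bounded-diameter path in $G_{m'}$ can only touch a bounded number of letters, so that once $m$ exceeds this bound there is enough room in $B_m$ to repatriate the path via the symmetric-group action. The role of the $3n$ threshold is precisely to guarantee that the at most $3n - |\alpha(u) \cup \alpha(v)|$ intermediate letters of a shortest path have somewhere to go inside $B_m \setminus (\alpha(u) \cup \alpha(v))$, and the choice of $4n$ in the statement merely provides comfortable slack for this counting argument.
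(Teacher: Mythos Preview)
Your argument is correct and follows essentially the same route as the paper: bound the length of a shortest path by $2n$ via Lemma~\ref{path_of_double_length}, observe that such a path touches only a bounded number of letters, and use the letter-permutation symmetry to transfer it into a fixed smaller word graph. Your letter count of $3n$ (exploiting that $\alpha(v)\subseteq\alpha(u)\cup\{z_i\}$, since every letter of $v$ must have been introduced along the path) is in fact sharper than the paper's $4n$, so your stronger conclusion for $m\geq 3n$ is valid.
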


\begin{proof}
Take arbitrary $u, v \in G_m$. Lemma~\ref{path_of_double_length} tells us that
$\mydist{u}{v} \leq 2n$, hence if we consider a shortest path connecting $u$ and
$v$ we know it is of length at most $2n$. On such a path, whenever we encounter
alphabet changing rules, denote by $z_i$ the element introduced by the
$i$\textsuperscript{th} alphabet changing rule. Now let
$B' = \alpha(u) \cup \alpha(v) \cup \{ z_i \}$, note that we have
$|B'| \leq 4n$. Hence, we see that the shortest path connecting $u$ and $v$ is
in the subgraph $H$ induced by the vertices
$\{ x_1 x_2 \dots x_n | x_i \in B' \} \subseteq \myvert{G_m}$, which is
trivially isomorphic to a subgraph of $G_{4n}$. As $G_{4n}$ is also a subgraph
of $G_m$, the result immediately follows.
\end{proof}

From Lemma~\ref{eventual_diameter}, for a given $\Pi_n$ we call the
$\mydiam{G_{4n}}$ the \textit{eventual diameter} of $\mywgp{\Pi_n}$, and note
from Lemma~\ref{all_at_least_n} that the eventual diameter is at least $n$.

\begin{proposition}
A family of word graphs, $\mywgp{\Pi_n}$, is asymptotically close to the Moore
bound if, and only if, its eventual diameter is $n$.
\end{proposition}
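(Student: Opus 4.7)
The plan is to compare $|V(G_m)|$ with the Moore bound $M(d_m, D)$ as $m \to \infty$, where $D$ denotes the eventual diameter. By Lemma~\ref{eventual_diameter}, for all $m \geq 4n$ the diameter is exactly $D$, and by Lemma~\ref{all_at_least_n} we have $D \geq n$. Hence the asymptotic behaviour of the ratio is controlled entirely by how $|V(G_m)|$ and $M(d_m, D)$ scale with $m$, and the proposition reduces to showing this ratio tends to a positive limit precisely when $n = D$.

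First I would count the vertices directly from the definition: a word graph vertex is an $n$-letter word with distinct letters drawn from $B$, so $|V(G_m)| = m(m-1)(m-2)\cdots(m-n+1)$. This is a monic polynomial in $m$ of degree $n$. Next I would read off the out-degree: each vertex has exactly $m - n$ alphabet-changing out-neighbours (one per letter outside the current alphabet) together with at most $|\Pi_n|$ alphabet-fixing out-neighbours. Since $n$ and $|\Pi_n|$ are fixed, the out-degree satisfies $d_m = m + O(1)$ as $m \to \infty$.

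Then I would write the Moore bound as a geometric sum
\[
	\mymoore{d_m}{D} = 1 + d_m + d_m^2 + \dots + d_m^D = \frac{d_m^{D+1} - 1}{d_m - 1},
\]
which is again a monic polynomial in $m$, now of degree $D$. Dividing the two expressions, the ratio $|V(G_m)|/\mymoore{d_m}{D}$ behaves asymptotically like $m^{n - D}$. When $D = n$ this ratio tends to $1$, giving asymptotic closeness to the Moore bound; when $D > n$ it tends to $0$, so the family is not asymptotically close. Since $D < n$ is excluded by Lemma~\ref{all_at_least_n}, these two cases are exhaustive and the biconditional follows.

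The only real obstacle is bookkeeping around the alphabet-fixing rules: one must verify that $|\Pi_n|$ contributes only an additive $O(1)$ to $d_m$ and that any coincidences among the targets of alphabet-fixing rules (which would decrease the out-degree) do not alter the leading-order asymptotics. Both concerns are resolved by the fact that $|\Pi_n|$ is a constant independent of $m$, so all such discrepancies are absorbed into lower-order terms of the polynomial expansions above and do not affect the conclusion.
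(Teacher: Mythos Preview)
Your proposal is correct and follows essentially the same approach as the paper: both arguments compute $|V(G_m)|$ as the falling factorial $m(m-1)\cdots(m-n+1) \sim m^n$, observe that the degree is $m + O(1)$, expand the Moore bound as a polynomial of degree equal to the eventual diameter, and conclude by comparing the leading exponents. The paper writes the eventual diameter as $n + \varepsilon$ and takes the degree to be exactly $|\Pi_n| + (m-n)$ without discussing coincidences among alphabet-fixing targets, whereas you phrase things in terms of $D$ and are slightly more careful about that bookkeeping, but these are cosmetic differences only.
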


\begin{proof}
Let $G_m \in \mywgp{\Pi_n}$ for $m \geq 4n$. Suppose the eventual diameter of
$\mywgp{\Pi_n}$ is $n + \varepsilon$. The degree of $G_m$ is given by
$|\Pi_n| + (m - n)$, hence letting $\alpha = |\Pi_n| - n$ we have the
$\mydeg{G_m} = m + \alpha$. Finally, we may count the size of $G_m$ as follows
\[
	|\myvert{G_m}| = n! \binom{m}{n}
		= \frac{m!}{(m - n)!} = m^n + \mybigo{m^{n - 1}}.
\]
Now we recall the Moore bound for a directed graph of degree $d$ and diameter
$k$ is given by
$\mymoore{d}{k} = d^k + d^{k - 1} + \dots + 1 = d^k + \mybigo{d^{k - 1}}$.
Hence, letting $d_m = \mydeg{G_m}$ and $k_m = \mydiam{G_m}$, we have
\begin{align*}
	\lim_{m \to \infty} \left\{
			\frac{|\myvert{G_m}|}{\mymoore{d_m}{k_m}} \right\}
		&= \lim_{m \to \infty} \left\{
			\frac{m^n + \mybigo{m^{n - 1}}}
			{\mymoore{m + \alpha}{n + \varepsilon}} \right\} \\
		&= \lim_{m \to \infty} \left\{
			\frac{m^n + \mybigo{m^{n - 1}}}
			{ (m + \alpha)^{n + \varepsilon}
			+ \mybigo{m^{n + \varepsilon - 1}} } \right\} \\
		&= \lim_{m \to \infty} \left\{
			\frac{m^n + \mybigo{m^{n - 1}}}
			{ m^{n + \varepsilon}
			+ \mybigo{m^{n + \varepsilon - 1}} } \right\} \\
		&= \begin{cases}
			1, & \text{if $\varepsilon = 0$}, \\
			0, & \text{otherwise.}
		\end{cases} \qedhere
\end{align*}
\end{proof}

Hence, we now introduce the restriction that a rule set $\Pi_n$ is
\textit{admissible} if the eventual diameter of $\mywgp{\Pi_n}$ is $n$. For the
rest of this section, we will only consider admissible sets $\Pi_n$.

We also now introduce the further restriction that, for all $\pi \in \Pi_n$,
$\pi(i) \leq i + 1$. Informally this means that the alphabet fixing rules cannot
``shift'' any letter to the left more than one space at a time.
We call this \textit{shift restriction} and note that the
G\'{o}mez and Faber-Moore-Chen graphs are shift restricted word graphs.
For the remainder of this section, we will only consider shift restricted word
graphs. We now show that the G\'{o}mez graphs are largest possible shift
restricted word graphs for given degree and diameter.

Let $\Pi_n$ be admissible and shift restricted, and let $G_m \in \mywgp{\Pi_n}$,
where $m > n$.
For each vertex $v \in \myvert{G_m}$ and letter $x \in B$ we introduce the function
$p_x(v)$ which is the position of the letter $x$ in $v$. The function is defined
by $p_{x_i}(x_1 x_2 \dots x_n) = i$ and $p_y(x_1 x_2 \dots x_n) = 0$ where
$y \not\in \{ x_1, x_2, \dots, x_n \}$.

\begin{lemma}
\label{shift_one}
For $u, v \in V$ with $u \rightarrow v$ and $y \in B$, we have
$p_y(v) \geq p_y(u) - 1$.
\end{lemma}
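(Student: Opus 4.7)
The plan is to split on the two kinds of edges allowed in a word graph and check each case directly, noting that only the alphabet-fixing case will rely on the shift restriction just introduced.

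First I would dispatch the alphabet-changing case. If $u = x_1 x_2 \dots x_n$ and $v = x_2 x_3 \dots x_n y$, then any letter $x_i$ with $i \geq 2$ satisfies $p_{x_i}(v) = i - 1 = p_{x_i}(u) - 1$; the letter $x_1$ is removed so $p_{x_1}(v) = 0 = p_{x_1}(u) - 1$; the new letter $y$ satisfies $p_y(u) = 0$ and $p_y(v) = n$, so the desired inequality holds trivially; and any letter not in $\alpha(u) \cup \alpha(v)$ has position $0$ in both $u$ and $v$, so $p_y(v) = 0 \geq -1 = p_y(u) - 1$. In every case $p_y(v) \geq p_y(u) - 1$.

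Next I would handle the alphabet-fixing case. Here $u = x_1 x_2 \dots x_n$ and $v = x_{\pi(1)} x_{\pi(2)} \dots x_{\pi(n)}$ for some $\pi \in \Pi_n$, so for $y \notin \alpha(u) = \alpha(v)$ we have $p_y(u) = p_y(v) = 0$ and the inequality is immediate. For $y = x_j$ we have $p_{x_j}(u) = j$ and, writing $i = \pi^{-1}(j)$, $p_{x_j}(v) = i$. The required inequality $i \geq j - 1$ is the same as $\pi(i) \leq i + 1$, which is exactly the shift restriction assumed on $\Pi_n$.

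There is no real obstacle: both cases reduce to a one-line check, and the whole point of the lemma is really just to record that the shift restriction is precisely the hypothesis that makes leftward motion by at most one step the worst that can happen. I would keep the exposition short and emphasise where the shift restriction enters, since subsequent results presumably iterate this bound along a path.
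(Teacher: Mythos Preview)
Your proof is correct and follows essentially the same approach as the paper: split on the two kinds of edges and, in the alphabet-fixing case, reduce the inequality $p_{x_j}(v) \geq p_{x_j}(u) - 1$ to the shift restriction $\pi(i) \leq i + 1$. The only organisational difference is that the paper first disposes of the case $p_y(u) = 0$ in one stroke (covering letters outside $\alpha(u)$ uniformly) before assuming $y = x_i$, whereas you handle that case separately inside each edge type; the substance is identical.
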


\begin{proof}
If $p_y(u) = 0$ then the result is immediate as $p_y(v') \geq 0$ for all
$v' \in V$. Hence, suppose $u = x_1 x_2 \dots x_n$ and $y = x_i$. If
$v = x_2 x_3 \dots x_n y$, then $p_{x_i}(v) = p_{x_i}(u) - 1$. If
$v = x_{\pi(1)} x_{\pi(2)} \dots x_{\pi(n)}$ and $\pi(j) = i$ then
$p_{x_i}(v) = p_{x_j}(u) = j \geq \pi(j) - 1 = p_{x_i}(u) - 1$.
\end{proof}

\begin{corollary}
\label{path_len_pos}
For any $u, v \in V$ and $y \in B$, all paths connecting $u$ to $v$ have length
at least $p_y(u) - p_y(v)$ (if $p_y(u) = 0$ and $p_y(v) > 0$ this becomes
$n + 1 - p_y(u)$).
\end{corollary}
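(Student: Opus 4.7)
The plan is a short induction on path length using Lemma~\ref{shift_one}. Given any path $u = w_0 \to w_1 \to \cdots \to w_k = v$, iterated application of Lemma~\ref{shift_one} edge by edge yields $p_y(w_i) \geq p_y(u) - i$ for every $0 \leq i \leq k$, and in particular $p_y(v) \geq p_y(u) - k$, so $k \geq p_y(u) - p_y(v)$. The argument is uniform across all four combinations of $y \in \alpha(u)$ and $y \in \alpha(v)$, since Lemma~\ref{shift_one} is stated for every $y \in B$ regardless of whether $y$ currently appears in the word. The only case in which this bound is not useful is $p_y(u) = 0$ and $p_y(v) > 0$, where $p_y(u) - p_y(v) < 0$ is vacuous.

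For that exceptional case, I would sharpen the bound by tracking when $y$ enters the alphabet for the last time. Since $y \notin \alpha(u)$ but $y \in \alpha(v)$, the path must contain at least one alphabet changing step whose new letter is $y$; such a step places $y$ at position $n$. Let $j$ be the index of the \emph{last} such step, so $p_y(w_j) = n$ and $y \in \alpha(w_i)$ for all $j \leq i \leq k$. Applying the iteration of the first paragraph to the subpath $w_j \to w_{j+1} \to \cdots \to w_k = v$ gives $k - j \geq n - p_y(v)$, and combining with $j \geq 1$ yields $k \geq n + 1 - p_y(v)$.

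The main obstacle is purely bookkeeping: one must notice that Lemma~\ref{shift_one} on its own is insufficient once $y$ can be re-introduced along the path, and that the correct quantity to argue about is the \emph{last} introduction of $y$ rather than the first (so that no subsequent removal can occur and the iteration remains tight). I note in passing that the parenthetical as written reads $n + 1 - p_y(u)$, which evaluates to the constant $n + 1$ when $p_y(u) = 0$; I take this to be a typographical slip for $n + 1 - p_y(v)$, since the latter matches the derivation above and is consistent with the diameter upper bound of Lemma~\ref{path_of_double_length}.
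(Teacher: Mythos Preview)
Your proposal is correct and matches the paper's intent: the corollary is stated there without proof as an immediate consequence of Lemma~\ref{shift_one}, and your first paragraph is exactly the one-line iteration the paper has in mind. Your handling of the parenthetical case (tracking the \emph{last} introduction of $y$) fills in a detail the paper omits, and your diagnosis of the typo is right --- the bound should read $n+1-p_y(v)$, as confirmed by its use in Lemma~\ref{contains_cycle}, where $p_y(u)=0$, $p_y(v)=1$, and the claimed lower bound is $n$.
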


\begin{lemma}
\label{at_least_diam_n}
$\mydiam{G_m} \geq n$.
\end{lemma}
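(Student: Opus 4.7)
The plan is to exhibit a pair of vertices $u, v$ in $G_m$ at distance at least $n$ by leveraging the shift restriction through Corollary~\ref{path_len_pos}. Specifically, I would look for $u$, $v$ and a letter $y \in B$ such that $y \notin \alpha(u)$ but $y$ occupies the leftmost position of $v$. Any path from $u$ to $v$ must then first introduce $y$ via an alphabet-changing rule (which places $y$ at position $n$), and then repeatedly shift $y$ leftward; by Lemma~\ref{shift_one} each edge in the path moves $y$ left by at most one, so carrying $y$ from position $n$ to position $1$ takes at least $n - 1$ further edges, giving total length at least $n$.

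Concretely, fix any vertex $u = x_1 x_2 \dots x_n \in \myvert{G_m}$. Since $m > n$, the set $B \setminus \alpha(u)$ is nonempty, so I can pick some $y \in B \setminus \alpha(u)$. Also because $m \geq n + 1$, there exist $n - 1$ distinct letters $z_1, z_2, \dots, z_{n - 1} \in B \setminus \{ y \}$, and I set $v = y z_1 z_2 \dots z_{n - 1}$. This choice gives $p_y(u) = 0$ and $p_y(v) = 1$, exactly matching the parenthetical case of Corollary~\ref{path_len_pos}, which then yields $\mydist{u}{v} \geq n + 1 - p_y(v) = n$, and hence $\mydiam{G_m} \geq n$.

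The only real point to verify is that the degenerate case of Corollary~\ref{path_len_pos} applies as intended. This unpacks directly from Lemma~\ref{shift_one}: along any $u$-to-$v$ walk, the value $p_y$ starts at $0$, can only become nonzero on an alphabet-changing rule that appends $y$ at position $n$, and thereafter decreases by at most one per edge by shift restriction. I expect no serious obstacle, since the hypothesis $m > n$ already provides both the fresh letter $y$ and enough further letters to build the witness $v$; note in particular that this argument is sharper than Lemma~\ref{all_at_least_n}, which needed the stronger hypothesis $m \geq 2n$ to get disjoint alphabets, whereas the shift restriction lets us force length $n$ using only a single ``missing'' letter.
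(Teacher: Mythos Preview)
Your proof is correct and follows essentially the same approach as the paper: exhibit a pair $u,v$ and apply Corollary~\ref{path_len_pos}. The only difference is the choice of tracked letter --- the paper takes $v = y\,x_1 x_2 \dots x_{n-1}$ and tracks $x_n$ (present at position $n$ in $u$, absent from $v$), invoking the main case $p_{x_n}(u)-p_{x_n}(v)=n$, whereas you track the freshly introduced letter $y$ and invoke the parenthetical case; these are dual readings of the same shift-by-one bound.
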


\begin{proof}
Let $u = x_1 x_2 \dots x_n$ and $v = y x_1 x_2 \dots x_{n - 1}$. We have
$p_{x_n}(u) = n$ and $p_{x_n}(v) = 0$, hence any path connecting $u$ and $v$ is
at least length $n$.
\end{proof}

\begin{proposition}
$\mydiam{G_m} = n$.
\end{proposition}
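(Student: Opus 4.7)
The lower bound $\mydiam{G_m} \geq n$ comes from Lemma~\ref{at_least_diam_n}, so the plan is to establish the matching upper bound. The case $m \geq 4n$ is immediate: admissibility of $\Pi_n$ combined with Lemma~\ref{eventual_diameter} gives $\mydiam{G_m} = \mydiam{G_{4n}} = n$. The substantive case is $n < m < 4n$.

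For this case, my plan is to fix $u, v \in \myvert{G_m}$, embed $G_m \subseteq G_{4n}$ via some inclusion $B \hookrightarrow B_{4n}$ (with $B_{4n}$ the alphabet of $G_{4n}$), and use admissibility to extract a path $P = (w_0, w_1, \ldots, w_k)$ from $u$ to $v$ in $G_{4n}$ with $k \leq n$. I would then modify $P$ to use only letters of $B$, producing a path in $G_m$ of the same length. The key structural input is the shift restriction via Lemma~\ref{shift_one}: an alphabet-changing rule introduces a letter at position~$n$, and that letter's position can decrease by at most one per subsequent step. Hence an introduced letter that is later dropped (``transient'') must persist for at least $n - 1$ further steps, forcing $k = n$ and the introduction to occur at step~$1$. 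All other introductions are ``persistent''---their letters remain in the final word $v$---and so already lie in $\alpha(v) \subseteq B$, requiring no modification. In particular, if $P$ contains no transient introduction, every letter appearing along $P$ lies in $\alpha(u) \cup \alpha(v) \subseteq B$ and $P$ itself is a path in $G_m$.

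The remaining task is to handle the case that $P$ contains a (necessarily unique) transient letter $z$, introduced at step~$1$ and dropped at step~$n$, which may be foreign ($z \in B_{4n} \setminus B$). The plan is to relabel $z$ by some $z' \in B$ with $z' \notin \alpha(u)$ (for validity of step~$1$) and $z' \notin \alpha(v) \setminus \alpha(u)$ (so that $z'$ does not clash with any subsequent persistent introduction during its lifetime), i.e.\ $z' \in B \setminus (\alpha(u) \cup \alpha(v))$. The main obstacle I anticipate is verifying this set is nonempty, which can fail when $m \leq 2n - c$ for $c = |\alpha(u) \cap \alpha(v)|$. I plan to handle that corner separately by observing that when $B = \alpha(u) \cup \alpha(v)$, the only letters available for introduction at step~$1$ within $G_m$ itself lie in $\alpha(v) \setminus \alpha(u)$, all persistent; greedily selecting persistent introductions thereafter yields a transient-free path of length at most~$n$ in $G_m$ directly, so that the relabeling step is not required.
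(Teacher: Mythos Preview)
Your core idea matches the paper's exactly, but an off-by-one miscount has led you to invent extra cases that cannot occur. A transient letter is impossible: if $z$ is introduced at step~$i$ then $p_z(w_i)=n$, while transience means $p_z(w_k)=p_z(v)=0$, so Corollary~\ref{path_len_pos} forces $k-i\ge n$; with $i\ge 1$ and $k\le n$ this is a contradiction. (Going from position $n$ all the way to position $0$ costs $n$ steps, not $n-1$: the drop step itself is one of them.) Thus your ``no transient'' case is the only case, and it is precisely the paper's argument: any shortest path from $u$ to $v$ in $G_{4n}$ uses only letters of $\alpha(u)\cup\alpha(v)\subseteq B$ and therefore already lies in $G_m$. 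Your relabelling step and the greedy fallback for $B=\alpha(u)\cup\alpha(v)$ are vacuous and can be deleted---which is fortunate, since the greedy construction as sketched is not obviously sound.
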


\begin{proof}
By our assumption of the eventual diameter being $n$, we need only show this for
$m < 4n$. Hence, consider $u, v \in \myvert{G_m}$ where $m < 4n$.
Let $\phi : G_m \hookrightarrow G_{4n}$ be the inclusion from $G_m$ to
$G_{4n}$, and consider a path from $u' = \phi(u)$ to $v' = \phi(v)$ in $G_{4n}$.
Letting $B' = \alpha(u') \cup \alpha(v')$, we can see that any vertex
$w \in G_{4n}$ satisfying $\alpha(w) \subseteq B'$ is invertible by $\phi$.
Suppose that on a path from $u'$ to $v'$ we introduce a letter $y \not\in B'$
via an alphabet changing rule, call the vertex after this rule $w$.
We have $p_y(w) = n$ and $p_y(v') = 0$, hence the remainder of the path is at
least length $n$, but this contradicts that the path is of length at most $n$.
Therefore all vertices $w$ between $u'$ and $v'$ satisfy
$\alpha(w) \subseteq B'$. Hence, we may use $\phi^{-1}$ to find a path
connecting $u$ and $v$ of length at most $n$ in $G_m$. This shows
$\mydiam{G_m} \leq n$, applying Lemma~\ref{at_least_diam_n} gives us the result.
\end{proof}

\begin{corollary}
If $\Pi_n$ and $\Tau_n$ are admissible and shift restricted, and
$|\Pi_n| < |\Tau_n|$, then each $G_m \in \mywgp{\Pi_n}$ and
$H_m \in \mywgp{\Tau_n}$ have the same diameter and are the same size for all
$m$, but $\mydeg{G_m} < \mydeg{H_m}$.
\end{corollary}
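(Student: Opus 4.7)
The plan is to observe that the three claims follow essentially immediately by combining the preceding proposition with the basic vertex and degree counts for word graphs, so the argument should be short and unified rather than requiring any new structural work.

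First I would address the diameter claim. Since both $\Pi_n$ and $\Tau_n$ are assumed admissible and shift restricted, the preceding proposition applies to both $\mywgp{\Pi_n}$ and $\mywgp{\Tau_n}$, giving $\mydiam{G_m} = n = \mydiam{H_m}$ for every $m > n$.

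Next I would handle the size claim. The vertex set of a word graph is defined as the set of length-$n$ words over $B$ with distinct letters, which depends only on $m = |B|$ and $n$ and not on the rule set. In particular $|\myvert{G_m}| = n!\binom{m}{n} = |\myvert{H_m}|$, so the two graphs are the same size regardless of the rules chosen.

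Finally, for the degree comparison, I would count the out-neighbours of a fixed vertex $v = x_1 x_2 \dots x_n$. The alphabet-changing rules contribute one out-neighbour per choice of $y \in B \setminus \alpha(v)$, giving $m - n$ such neighbours, all with alphabet distinct from $\alpha(v)$. The alphabet-fixing rules contribute one out-neighbour per $\pi \in \Pi_n$; because the $x_i$ are pairwise distinct, distinct permutations yield distinct words, and all these words have alphabet exactly $\alpha(v)$. Hence the two classes of out-neighbours are disjoint and $\mydeg{G_m} = |\Pi_n| + (m - n)$; the same count applied to $H_m$ gives $\mydeg{H_m} = |\Tau_n| + (m - n)$, and the hypothesis $|\Pi_n| < |\Tau_n|$ yields the strict inequality.

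The only point requiring any care is the disjointness and injectivity in the degree count, which is minor; otherwise the statement is a direct bookkeeping consequence of the previous proposition together with the definition of a word graph, so I do not anticipate any real obstacle in writing it out.
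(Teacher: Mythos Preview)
Your proposal is correct and matches the paper's approach: the paper states this corollary without proof, treating it as an immediate consequence of the preceding proposition together with the vertex and degree formulas already established (the degree formula $\mydeg{G_m} = |\Pi_n| + (m-n)$ having appeared explicitly in the proof of the Moore-bound proposition). Your write-up simply spells out these routine bookkeeping steps, including the minor disjointness check for the degree count, which is fine.
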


Hence we now may make the definition that an admissible shift restricted
rule set $\Pi_n$ is \textit{optimal}
if there exists no rule set $\Tau_n$ with $|\Tau_n| < |\Pi_n|$ which is also
both admissible and shift restricted.

\begin{lemma}
\label{contains_cycle}
For each $1 \leq i \leq n$ there exists some $\pi \in \Pi_n$ which contains an
$i$-cycle.
\end{lemma}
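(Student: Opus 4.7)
The plan is to argue by contradiction: assume $\Pi_n$ contains no permutation with an $i$-cycle, and exhibit a vertex that is unreachable from $u = x_1 x_2 \cdots x_n$ in $n$ steps, contradicting admissibility. For $2 \leq i \leq n$ I would take as target
\[
	v = x_{n-i+1}\, y_1\, y_2\, \cdots\, y_{n-1},
\]
where $y_1, \ldots, y_{n-1}$ are distinct letters in $B \setminus \alpha(u)$ (take $m \geq 4n$ so such letters exist and $\mydiam{G_m} = n$). Since $v$ contains $n - 1$ letters outside $\alpha(u)$ and each alphabet-changing rule introduces exactly one new letter while each alphabet-fixing rule introduces none, every $u \to v$ path of length at most $n$ uses at least $n - 1$ alphabet-changing steps, and hence at most one alphabet-fixing step.

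Three subcases follow according to the number of fixing steps. Two or more fixes is ruled out immediately, since only $n - 2$ new letters could then be introduced. Zero fixes is dispatched by noting that after $l$ pure alphabet changes the word has the form $x_{l+1} x_{l+2} \cdots x_n z_1 z_2 \cdots z_l$ with $z_1 \notin \alpha(u)$; matching $v$ at position~$1$ forces $l = n - i$, but then position~$2$ would be $x_{n-i+2} \in \alpha(u)$, contradicting $y_1 \notin \alpha(u)$ for $i \geq 2$. The substantive case is exactly one fixing step, at some position $s$ along the path. Tracking the position of $x_{n-i+1}$---it starts at position $n - i + 1$ in $u$, is shifted leftward by one at each alphabet change, and is moved only by the lone fix---yields $s \in \{1, 2, \ldots, n - i + 1\}$ together with the relation $\pi(n - s + 1) = n - s - i + 2$ on the fixing permutation $\pi$. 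The shift restriction $\pi(k) \leq k + 1$ then forces the cycle of $\pi$ containing $n - s + 1$ to be exactly $(n - s - i + 2, n - s - i + 3, \ldots, n - s + 1)$, a cycle of length $i$.

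This proves the result for $2 \leq i \leq n$. The case $i = 1$ (for $n \geq 3$) follows as a corollary: the argument with $i = n - 1$ yields some $\pi \in \Pi_n$ containing an $(n-1)$-cycle, and under shift restriction the single position of $\{1, \ldots, n\}$ not covered by that cycle must be fixed by $\pi$, giving the required $1$-cycle in the same permutation. The main obstacle I anticipate is carefully ruling out path variants in which $x_{n-i+1}$ is first removed by an early alphabet change and then reintroduced at some later alphabet-changing step $r$; in such a scenario one derives $\pi(n - s + 1) = n - s + 1 + r$ for the fix to return $x_{n-i+1}$ to position~$1$ by step $n$, and the shift restriction then forces $r \leq 1$, incompatible with $r \geq 2$ for any genuine reintroduction.
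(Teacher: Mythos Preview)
Your proposal is correct and takes a genuinely different route from the paper. The paper chooses the target
\[
v = y_1 y_2 \cdots y_k\, x_n\, x_1 x_2 \cdots x_{n-(k+1)},
\]
containing only $k$ new letters, and then tracks simultaneously the deterministic leftward march of the block $y_1,\dots,y_k$ and the position of $x_n$ through an arbitrary length-$n$ path; the argument is that $x_n$ must at some step jump forward over that block by exactly $k+1$ places, and this jump is the required $(k+1)$-cycle. Your choice $v = x_{n-i+1}\,y_1\cdots y_{n-1}$, with $n-1$ new letters, buys you the structural fact that the path can contain at most one alphabet-fixing rule, collapsing the analysis to a single permutation~$\pi$ and a single tracked letter. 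Both arguments recover the $1$-cycle from the $(n-1)$-cycle exactly as you do. Your route is more elementary; the paper's uses fewer auxiliary letters but needs the finer position-tracking via Corollary~\ref{path_len_pos}.

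Two remarks on the write-up. First, the reintroduction worry dissolves cleanly once you note that in the one-fix case the $n-1$ alphabet changes must each introduce a distinct $y_j$ (there are $n-1$ of them and each must appear at least once), so no step is available to reintroduce $x_{n-i+1}$; your sketch of this case is hard to parse and appears to conflate the roles of $r$ and $s$. Second, the assertion that $\pi(a)=a-i+1$ together with shift restriction forces the cycle through $a$ to be exactly $\{a-i+1,\dots,a\}$ deserves a line: since $\pi(j)\le j+1$ maps $\{1,\dots,a-i\}$ into $\{1,\dots,a-i+1\}$ and the value $a-i+1=\pi(a)$ is already taken, the set $\{1,\dots,a-i\}$ is $\pi$-invariant; on the complementary invariant block $\{a-i+1,\dots,a\}$ a descending induction on preimages then gives $\pi(j)=j+1$ for each $j<a$.
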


\begin{remark}
The proof of this lemma obscures that this is a fairly natural
observation. We illustrate below an example path used in the lemma,
noting that the key idea is that the letter $x_n$ has to ``jump''
the block of $y_i$s by exactly $k + 1$ spaces. At the point of the jump, we must
have a permutation $\pi \in \Pi_n$, and the jump is a $(k + 1)$-cycle in that
permutation.
\[
	\newcommand{\f}[1]{\textcolor{red}{#1}}
	\newcommand{\g}[1]{\textcolor{blue}{#1}}
	\begin{matrix}
	x_1 & x_2 & x_3 & x_4 & x_5 & x_6 & x_7 & x_8 & \f{x_9} \\
	x_2 & x_3 & x_4 & x_5 & x_6 & x_7 & x_8 & \f{x_9} & \g{y_1} \\
	x_3 & x_4 & x_5 & x_6 & x_7 & x_8 & \f{x_9} & \g{y_1} & \g{y_2} \\
	x_4 & x_5 & x_6 & x_7 & x_8 & \f{x_9} & \g{y_1} & \g{y_2} & \g{y_3} \\
	x_5 & x_6 & x_7 & x_8 & \f{x_9} & \g{y_1} & \g{y_2} & \g{y_3} & x_1 \\
	x_6 & x_7 & x_8 & \f{x_9} & \g{y_1} & \g{y_2} & \g{y_3} & x_1 & x_2 \\
	x_7 & x_8 & \f{x_9} & \g{y_1} & \g{y_2} & \g{y_3} & x_1 & x_2 & x_3 \\
	x_7 & x_8 & \g{y_1} & \g{y_2} & \g{y_3} & \f{x_9} & x_1 & x_2 & x_3 \\
	x_8 & \g{y_1} & \g{y_2} & \g{y_3} & \f{x_9} & x_1 & x_2 & x_3 & x_4 \\
	\g{y_1} & \g{y_2} & \g{y_3} & \f{x_9} & x_1 & x_2 & x_3 & x_4 & x_5 \\
	\end{matrix}
\]
\end{remark}

\begin{proof}
For $k \geq 1$ we show the existence of a permutation containing a
$(k + 1)$-cycle, noting that when $k + 1 = n - 1$ the remaining fixed element of
the permutation provides the missing ``1-cycle''.
Let $u = x_1 x_2 \dots x_n$ and
$v = y_1 y_2 \dots y_k x_n x_1 x_2 \dots x_{n - (k + 1)}$.
As $p_{y_1}(u) = 0$ and $p_{y_1}(v) = 1$, the shortest path that connects $u$
and $v$ is of length $n$. Hence, let
$u = u_0 \rightarrow u_1 \rightarrow \dots \rightarrow u_n = v$ be such a path.
A trivial induction and Lemma~\ref{shift_one} shows for $1 \leq j \leq k$ we
have $p_{y_j}(u_{i + j}) = n - i$. A similar induction starting from $u_0$
shows that $p_{x_n}(u_i) \geq n - i$ and an induction working backwards from
$u_n$ shows that $p_{x_n}(u_i) \leq n - i + k + 1$. As eventually we have
$p_{x_n}(u_n) = k + 1 > 0$ we know there exists $c$ such that
$p_{x_n}(u_{c - 1}) = n - c + 1$ and $p_{x_n}(u_c) > n - c$.
Now consider the first such $c$. For each $1 \leq j \leq k$ we cannot have
$p_{x_n}(u_c) = n - c + j$, as we have that $p_{y_j}(u_c) = n - c + j$, and we
cannot have some $j > k + 1$ such that $p_{x_n}(u_c) = n - c + j$, as we have
$p_{x_n}(u_c) \leq n - c + k + 1$. The only possibility this leaves is that
$p_{x_n}(u_c) = n - c + k + 1$. This can only happen if the edge connecting
$u_{c - 1}$ and $u_c$ is alphabet fixing, corresponding to some rule $\pi$.

We see that in $\pi$, each $y_j$ went from $n - c + i + j + 1$ to
$n - c + i + j$, and $x_n$ went from $n - c + 1$ to $n - c + k + 1$, hence,
letting $\alpha = n - c + 1$, we have the $k + 1$-cycle
$((\alpha + k) \  (\alpha + k - 1) \  \dots \  \alpha)$.
\end{proof}

\begin{corollary}
The G\'{o}mez graphs are optimal.
\end{corollary}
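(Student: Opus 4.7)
The plan is to combine Lemma~\ref{contains_cycle} with a simple counting argument on cycle lengths to obtain a lower bound on $|\Pi_n|$, and then to verify that the G\'omez construction attains this bound.

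First I would apply Lemma~\ref{contains_cycle}: for every $1 \leq i \leq n$ there exists some $\pi \in \Pi_n$ containing an $i$-cycle. Since the cycles of any permutation on $\{1, 2, \ldots, n\}$ form a partition of $n$, the sum of all cycle lengths occurring across the entire rule set equals $n \cdot |\Pi_n|$. By Lemma~\ref{contains_cycle}, the multiset of these cycle lengths contains each element of $\{1, 2, \ldots, n\}$ at least once, so this total sum is at least $1 + 2 + \cdots + n = n(n+1)/2$. Rearranging gives $|\Pi_n| \geq \lceil (n+1)/2 \rceil$, and this bound holds for every admissible shift restricted rule set.

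Second, I would compare this lower bound with the size of the rule set defining the G\'omez graphs as in \cite{gomez}. The G\'omez construction uses precisely $\lceil (n+1)/2 \rceil$ permutations whose cycle types collectively contain every length in $\{1, 2, \ldots, n\}$, obtained by pairing the lengths $i$ and $n - i$ inside a single permutation of cycle type $(i, n - i)$, with the middle length handled separately when $n$ is odd. Since the G\'omez rule set matches the lower bound derived above, no admissible shift restricted $\Tau_n$ can be strictly smaller, so the G\'omez graphs are optimal in the sense defined before the corollary.

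The main obstacle will be the second step, which requires unpacking the explicit definition of the G\'omez rule set in \cite{gomez} and checking that its cycle types cover $\{1, 2, \ldots, n\}$ with no redundancy; this is a short case split on the parity of $n$ once the cycle structures are written out. By contrast, the lower bound from the first step is essentially a free consequence of Lemma~\ref{contains_cycle} combined with the pigeonhole observation that cycle lengths of a permutation sum to $n$.
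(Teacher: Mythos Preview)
Your proof is correct and follows essentially the same approach as the paper: both rely on Lemma~\ref{contains_cycle} to force every cycle length $1,\dots,n$ to appear in $\Pi_n$, and then observe that the G\'omez rule set realises this with no slack. Your explicit lower bound $|\Pi_n|\ge\lceil(n+1)/2\rceil$, obtained by summing cycle lengths, is a cleaner and more complete formulation of what the paper leaves largely implicit (the paper's even-case sentence about ``eliminating only one $k$-cycle'' really only argues that the specific G\'omez set cannot be shrunk, whereas your bound rules out \emph{any} smaller admissible shift-restricted $\Tau_n$). One small descriptive slip: the ``middle length'' situation, where $i=n-i$ forces cycle type $(k,k)$, occurs when $n=2k$ is even, not odd; for odd $n$ every pair $(i,n-i)$ is proper and it is the full $n$-cycle that sits alone. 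This does not affect the bound or the conclusion.
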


\noindent
\textit{(The reader unfamiliar with the definition of the G\'omez graphs will
find the definition at the beginning of Section~\ref{intro_to_gomez}).}

\begin{proof}
For $n = 2k + 1$, the set $\Pi_n$ which defines the G\'{o}mez graphs contains
exactly one cycle of each length $1 \leq i \leq n$.

For $n = 2k$, the set $\Pi_n$ which defines the G\'{o}mez graphs contains
exactly one cycle of each length $1 \leq i \leq n$, $i \neq k$, and two cycles
of length $k$. As each permutation is a permutation on $n = 2k$ elements, it is
not possible to remove a permutation from $\Pi_n$ by eliminating only one
$k$-cycle.
\end{proof}

Altogether, this shows that if we want to try to create new word graphs which
are larger than G\'{o}mez graphs for a given degree and diameter, then we will
either have to consider non-admissible $\Pi_n$ to find small examples, which
would be limited to $m < 2n$, or consider word graphs which are
not shift restricted.

We will now proceed in this section by establishing other properties shared by
word graphs. In particular, we are interested in when they are Cayley, and shall
provide a table of which values of $n$ and $m$ correspond to Cayley graphs. We
note that it is possible that other values of $n$ and $m$ can correspond to
Cayley graphs also, but this can only happen if the graph
$G_m \in \mywgp{\Pi_n}$ contains an automorphism outside of $S_m$.

\begin{lemma}
\label{symmetric_group_lemma}
There is some group $H \leq \myaut{G_m}$ with $H \cong S_m$.
\end{lemma}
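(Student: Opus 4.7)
The plan is to exhibit an explicit embedding of $S_m$ into $\myaut{G_m}$ by identifying $S_m$ with the symmetric group on the alphabet $B$ (recall $|B| = m$) and letting each $\sigma \in S_m$ act letterwise on vertices. That is, for $\sigma \in S_m$ I would define a map $\hat\sigma : \myvert{G_m} \to \myvert{G_m}$ by
\[
    \hat\sigma(x_1 x_2 \dots x_n) = \sigma(x_1)\sigma(x_2) \dots \sigma(x_n),
\]
and then show that $H = \{\hat\sigma : \sigma \in S_m\}$ is a subgroup of $\myaut{G_m}$ isomorphic to $S_m$.

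To carry this out I would verify four things in order. First, that $\hat\sigma$ sends vertices to vertices: since $\sigma$ is a bijection of $B$, the letters $\sigma(x_1), \dots, \sigma(x_n)$ are distinct precisely when the $x_i$ are, so the image is a valid word. Second, that $\hat\sigma$ preserves both kinds of directed edges. For an alphabet changing edge $x_1 \dots x_n \to x_2 \dots x_n y$ with $y \in B \setminus \{x_1, \dots, x_n\}$, applying $\sigma$ letterwise gives $\sigma(x_1) \dots \sigma(x_n) \to \sigma(x_2) \dots \sigma(x_n)\sigma(y)$, and $\sigma(y) \notin \{\sigma(x_1), \dots, \sigma(x_n)\}$ by bijectivity of $\sigma$, so this is again an alphabet changing edge. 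For an alphabet fixing edge coming from $\pi \in \Pi_n$, the image edge is the alphabet fixing edge for the same $\pi$ applied to $\sigma(x_1)\dots\sigma(x_n)$, because the rule only rearranges positions and does not reference particular letters.

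Third, I would check that $\sigma \mapsto \hat\sigma$ is a group homomorphism: the identity of $S_m$ clearly induces the identity map, and $\widehat{\sigma\tau}(x_1 \dots x_n) = (\sigma\tau)(x_1) \dots (\sigma\tau)(x_n) = \hat\sigma(\hat\tau(x_1 \dots x_n))$, so in particular each $\hat\sigma$ has inverse $\widehat{\sigma^{-1}}$ and is therefore a bijection of $\myvert{G_m}$, hence an automorphism. Fourth, I would verify injectivity: since $m > n$, every letter $y \in B$ appears in at least one vertex (extend $y$ to any word of length $n$ using $n-1$ further letters from $B$, which is possible because $|B \setminus \{y\}| = m - 1 \geq n$); so if $\hat\sigma$ is the identity on vertices, then $\sigma$ fixes every letter of $B$ and thus $\sigma = \mathrm{id}$. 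Combining, $\sigma \mapsto \hat\sigma$ is an injective homomorphism into $\myaut{G_m}$, so $H$ is a subgroup isomorphic to $S_m$.

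I do not expect any significant obstacle here; the content of the lemma is essentially that the construction of $G_m$ is equivariant with respect to relabelling the alphabet, and every rule in the definition treats positions rather than specific letters. The only mild subtlety worth being explicit about is the injectivity argument, which needs the (automatic) inequality $m > n$ to guarantee that every letter of $B$ actually appears somewhere in the vertex set.
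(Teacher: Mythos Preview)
Your proposal is correct and follows exactly the same construction as the paper: the letterwise action of $S_m$ on the alphabet $B$, sending $x_1 x_2 \dots x_n$ to $\sigma(x_1)\sigma(x_2)\dots\sigma(x_n)$. The paper merely states this construction without further verification, whereas you spell out the routine checks (edge preservation, homomorphism, injectivity); your added remark about needing $m > n$ for injectivity is a harmless overcaution, since in fact $m \geq n$ already suffices.
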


\begin{proof}
We construct $H$ by taking all $\phi \in S_m$ acting naturally on $B'$ and
defining $\phi' : \myvert{G_m} \to \myvert{G_m}$ by
$\phi'(x_1 x_2 \dots x_n) = \phi(x_1) \phi(x_2) \dots \phi(x_n)$.
\end{proof}

In light of this lemma, we quote a result available in
\cite{regular-action, regular-action-2, regular-action-3}
and used in \cite{when-cayley} and use it to classify when
word graphs are Cayley. The following is an exhaustive table of
values of $n$ and $m$ such that there is a subgroup of $S_m$ acting regularly on
the tuples of length $n$, and the subgroups which have this action.

\begin{table}[H]
\centering
\begin{tabular}{|l|l|l|}
\hline
$n$ & $m$     & Group                             \\ \hline
$k$ & $k$     & $S_k$                             \\
$k$ & $k + 1$ & $S_{k + 1}$                       \\
$k$ & $k + 2$ & $A_{k + 2}$                       \\
2   & $q$     & Finite near-field                 \\
3   & $q + 1$ & $\mypsl{2}{q}, \mypdeltal{2}{q} $ \\
4   & 11      & $M_{11}$                          \\
5   & 12      & $M_{12}$                          \\ \hline
\end{tabular}
\end{table}

\begin{corollary}
For $n, m$ in this table, the graph $G_m \in \mywgp{\Pi_n}$ is Cayley.
\end{corollary}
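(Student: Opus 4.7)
The plan is to combine Lemma~\ref{symmetric_group_lemma} with the quoted classification result in a more-or-less mechanical way: producing a regular subgroup of $\myaut{G_m}$ from a regular subgroup of $S_m$ on ordered $n$-tuples.

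First I would unpack what the vertices of $G_m$ actually are. By construction, $\myvert{G_m}$ is precisely the set of words $x_1 x_2 \dots x_n$ with $x_i \in B$ all distinct, which is the same thing as the set of injective functions $\{1,\dots,n\} \to B$, or equivalently the ordered $n$-tuples of distinct elements of $B$. This is exactly the set on which the groups in the table are asserted to act regularly.

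Next I would invoke Lemma~\ref{symmetric_group_lemma}: the map $\phi \mapsto \phi'$ sending $\phi \in S_m$ to the componentwise action $\phi'(x_1 x_2 \dots x_n) = \phi(x_1)\phi(x_2)\dots\phi(x_n)$ is an embedding of $S_m$ into $\myaut{G_m}$. Observe that this embedding is faithful and that the induced action of $S_m$ on $\myvert{G_m}$ is the same as its natural action on ordered $n$-tuples of distinct letters of $B$ (because $\phi'$ is determined by how $\phi$ permutes $B$, and two distinct $\phi$'s disagree on at least one letter, hence on at least one $n$-tuple containing that letter).

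Now for a pair $(n,m)$ in the table, let $H \leq S_m$ be one of the listed groups, which by the cited classification acts regularly on ordered $n$-tuples of distinct elements of $B$. Its image $H' = \{\phi' : \phi \in H\}$ under the embedding of Lemma~\ref{symmetric_group_lemma} sits inside $\myaut{G_m}$, and by the previous paragraph its action on $\myvert{G_m}$ coincides with the regular action of $H$ on $n$-tuples. Hence $H'$ is a subgroup of $\myaut{G_m}$ acting regularly on the vertices of $G_m$, and by the standard characterisation of Cayley graphs (a graph is Cayley if and only if its automorphism group contains a subgroup acting regularly on vertices), $G_m$ is Cayley.

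I do not expect any genuine obstacle here; this is a short bookkeeping argument on top of the cited classification. The only subtle point to flag clearly in the write-up is that Lemma~\ref{symmetric_group_lemma} gives automorphisms of $G_m$ for \emph{any} rule set $\Pi_n$ (the lemma does not depend on $\Pi_n$), so no hypothesis on $\Pi_n$ is needed in the corollary beyond what has already been fixed in the section.
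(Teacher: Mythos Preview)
Your proposal is correct and is precisely the argument the paper has in mind: the corollary is stated without proof there, as an immediate consequence of Lemma~\ref{symmetric_group_lemma} together with the quoted classification of subgroups of $S_m$ acting regularly on ordered $n$-tuples. You have simply spelled out the bookkeeping (identifying $\myvert{G_m}$ with ordered $n$-tuples, transporting the regular subgroup through the embedding, and invoking the regular-subgroup characterisation of Cayley graphs), which is exactly what is intended.
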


\begin{corollary}
If $\myaut{G_m} \cong S_m$, then the graph $G_m \in \mywgp{\Pi_n}$ is Cayley if,
and only if, $n$ and $m$ are in the given table.
\end{corollary}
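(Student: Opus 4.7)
The ``if'' direction is immediate from the preceding corollary, which did not need the hypothesis $\myaut{G_m} \cong S_m$, so the task is to prove the converse: under the hypothesis, if $G_m$ is Cayley then $(n, m)$ must appear in the quoted table.

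My plan is to reduce ``$G_m$ Cayley'' to the existence of a regular action on $n$-tuples and then invoke the classification. First, recall that being Cayley is equivalent to the existence of some subgroup $K \leq \myaut{G_m}$ acting regularly on $\myvert{G_m}$. The subgroup $H$ constructed in Lemma~\ref{symmetric_group_lemma} already satisfies $H \leq \myaut{G_m}$ and $H \cong S_m$; combined with the hypothesis $\myaut{G_m} \cong S_m$, a simple order count forces $H = \myaut{G_m}$. Consequently the isomorphism $H \cong S_m$ of Lemma~\ref{symmetric_group_lemma} identifies the action of $\myaut{G_m}$ on $\myvert{G_m}$ with the natural coordinatewise action of $S_m$ on the set of $n$-tuples of distinct letters from $B$.

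Now transport the regular subgroup $K$ across this isomorphism to obtain a subgroup $K' \leq S_m$ which acts regularly on the $n$-tuples of distinct elements of $B$. By the classification summarised in the table (taken from \cite{regular-action, regular-action-2, regular-action-3}), every such pair $(n, m)$ admitting a subgroup of $S_m$ that acts regularly on the $n$-tuples appears in the table. Hence $(n, m)$ must be one of the listed pairs, completing the converse direction.

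The main subtlety, and really the only nontrivial step, is justifying that the regular action of $K$ on vertices corresponds, under the abstract isomorphism $\myaut{G_m} \cong S_m$, to the \emph{natural} action of $S_m$ on tuples rather than some exotic one; it is exactly the identification $H = \myaut{G_m}$ forced by orders that pins this down, after which the classification table applies verbatim.
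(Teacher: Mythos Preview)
Your argument is correct and matches the paper's intended reasoning: the paper states this as an immediate corollary of the classification table without writing out a proof, and you have simply made explicit the two steps it leaves implicit (that $H=\myaut{G_m}$ by an order count, so any regular subgroup of $\myaut{G_m}$ is a subgroup of $S_m$ acting regularly on $n$-tuples in the natural action).
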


Hence, we now conclude this section by establishing a test to determine whether
a given family of word graphs satisfies $\myaut{G_m} \cong S_m$. First we shall
need some definitions. Let $\Gamma_n$ be the
Cayley graph $\mycayley{\Pi_n}{S_n}$.

\begin{lemma}
Letting $H$ be a subgraph of $G_m$ induced by vertices
$\{ x_1, x_2, \dots, x_n \} \subset B$, we have $H \cong \Gamma_n$.
\end{lemma}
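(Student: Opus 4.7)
The plan is to exhibit an explicit bijection between $\myvert{H}$ and $S_n$ and then verify that the edge set corresponds exactly to the standard Cayley graph $\mycayley{\Pi_n}{S_n}$. First I would observe that every vertex of $H$ is a word of length $n$ with distinct entries drawn from the $n$-element set $\{x_1, x_2, \dots, x_n\}$, so each such word must use every letter exactly once. Consequently the assignment $\sigma \mapsto x_{\sigma(1)} x_{\sigma(2)} \dots x_{\sigma(n)}$ is a bijection from $S_n$ to $\myvert{H}$.

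Next I would dispose of the alphabet-changing rules. If $v \in \myvert{H}$, then $\alpha(v) = \{x_1, \dots, x_n\}$, so every letter $y \in B \setminus \alpha(v)$ lies outside $\{x_1, \dots, x_n\}$. Any alphabet-changing edge from $v$ therefore leaves $H$ and contributes nothing to the induced subgraph. Thus the edge set of $H$ comes entirely from the alphabet-fixing rules in $\Pi_n$.

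For the alphabet-fixing rules, I would compute the action directly: applying the rule $\pi \in \Pi_n$ to $v = x_{\sigma(1)} x_{\sigma(2)} \dots x_{\sigma(n)}$ produces $x_{\sigma(\pi(1))} x_{\sigma(\pi(2))} \dots x_{\sigma(\pi(n))}$, which under the bijection above corresponds to the permutation $\sigma \pi \in S_n$. Hence under the identification $\sigma \leftrightarrow x_{\sigma(1)} \dots x_{\sigma(n)}$, the directed edges of $H$ are exactly of the form $\sigma \to \sigma \pi$ for $\pi \in \Pi_n$, which is precisely the definition of $\mycayley{\Pi_n}{S_n}$. This yields the desired isomorphism $H \cong \Gamma_n$.

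I expect no serious obstacle here: the content is essentially a careful bookkeeping exercise. The only thing to be wary of is the composition convention (left vs.\ right action) when translating the rule $\pi$ into multiplication on $S_n$, and the observation that $H$ sees no alphabet-changing edges because $|\alpha(v)|$ already exhausts the chosen letter set.
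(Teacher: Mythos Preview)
Your proof is correct and follows the same approach as the paper, which dispatches the lemma in a single sentence (``This is simply a relabelling''). You have simply made explicit the bijection $\sigma \leftrightarrow x_{\sigma(1)}\dots x_{\sigma(n)}$ and verified it respects edges, which is exactly the content of that relabelling.
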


\begin{proof}
This is simply a relabelling.
\end{proof}

We will now refer to the graph $\Gamma_n$ as the
\textit{alphabet fixing subgraph} of $G$, noting that it is unique to
isomorphism regardless of the choice of $\{ x_i \}$.
Now we make two further definitions. We shall call a word graph
$G$ \textit{alphabet stable} if there exists no automorphism $\phi \in \myaut{G}$
such that there exist some $u, v \in \myvert{G}$ with $\alpha(u) = \alpha(v)$
but $\alpha(\phi(u)) \neq \alpha(\phi(v))$. In other words, a word graph is
alphabet stable if, and only if, it preserves whether arcs are alphabet changing
or alphabet fixing. Second, we shall call a family of word graphs
\textit{subregular} if the alphabet fixing subgraph $\Gamma_n$ of $G_m$ is
regular, i.e. $\myaut{\Gamma_n} \cong S_n$. In the following let $G_m$ be a
word graph which is alphabet stable and subregular. We now aim to show that
$\myaut{G_m} \cong S_m$.

\begin{lemma}
\label{fixes_alpha_subs}
If $\phi \in \myaut{G_m}$ fixes a vertex $u$, then $\phi$ fixes all $v$ such
that $\alpha(u) = \alpha(v)$.
\end{lemma}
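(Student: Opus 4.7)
The plan is to use alphabet stability to restrict $\phi$ to an automorphism of the induced subgraph $H$ on $\{ v \in \myvert{G_m} : \alpha(v) = \alpha(u) \}$, and then invoke subregularity to force that restriction to be the identity.

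For the first step, recall from the preceding lemma that $H \cong \Gamma_n$. An arc $w \to w'$ of $G_m$ with $w \in H$ stays inside $H$ precisely when it is alphabet fixing, while any arc leaving $H$ is alphabet changing. Alphabet stability of $G_m$ ensures $\phi$ sends alphabet fixing arcs to alphabet fixing arcs, so $\phi$ permutes the strongly connected components of the subdigraph of $G_m$ formed by alphabet fixing arcs. Subregularity implies $\Pi_n$ generates $S_n$ (otherwise $\Gamma_n$ would be disconnected and $\myaut{\Gamma_n}$ strictly larger than $S_n$), so $\Gamma_n \cong H$ is strongly connected and is itself the strongly connected component of $u$ under alphabet fixing arcs. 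Since $\phi(u) = u$, we conclude $\phi(H) = H$.

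For the second step, $\phi|_H$ is an automorphism of $H$ fixing $u$. By subregularity $\myaut{H} \cong \myaut{\Gamma_n} \cong S_n$, and since $\Gamma_n = \mycayley{\Pi_n}{S_n}$ is a Cayley graph, the copy of $S_n$ sitting inside $\myaut{\Gamma_n}$ with the correct order is the regular left-multiplication action of $S_n$ on itself. This action is free, so every vertex stabiliser is trivial, forcing $\phi|_H = \mathrm{id}$. Hence $\phi(v) = v$ for every $v$ with $\alpha(v) = \alpha(u)$.

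The main obstacle is the first step, namely establishing $\phi(H) = H$. It is intuitive that alphabet classes should be respected by an alphabet stable automorphism, but one must carefully argue that \emph{same alphabet} coincides with \emph{reachable by alphabet fixing arcs}, which in turn rests on the strong connectivity of $\Gamma_n$ (and thus on subregularity forcing $\Pi_n$ to generate $S_n$). Once $H$ is recognised as a structural invariant of the digraph, both steps become routine applications of the hypotheses.
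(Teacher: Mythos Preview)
Your proof is correct, but you have made the first step considerably harder than necessary. The paper's argument for $\phi(H) = H$ is a one-liner: if $\alpha(v) = \alpha(u)$, then alphabet stability gives $\alpha(\phi(v)) = \alpha(\phi(u))$, and since $\phi(u) = u$ this equals $\alpha(u)$; thus $\phi(v) \in H$. No connectivity of $\Gamma_n$ is needed, and hence no auxiliary claim that subregularity forces $\Pi_n$ to generate $S_n$. Your detour through arc-preservation and strongly connected components is valid (the wreath-product count does show that a disconnected $\Gamma_n$ would have automorphism group strictly larger than $S_n$), but it obscures the point: alphabet stability is stated as a condition on \emph{vertices}, and applying it directly to the pair $u,v$ already pins down the alphabet of $\phi(v)$. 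The second step is the same in both proofs.
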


\begin{proof}
Let $V = \{ v \in \myvert{G_m} | \alpha(v) = \alpha(u) \}$. Consider
$\psi = \phi|_V$, the restriction of $\phi$ to the vertices of $V$. For any
$v \in V$ we have $\alpha(\psi(v)) = \alpha(\phi(v)) = \alpha(u)$, hence we have
$\psi(v) \in V$. As $\phi$ is an automorphism, $\psi$ is injective, and
therefore bijective as its image is its domain. Hence $\psi$ is a bijection on
the subgraph induced by vertices of $V$, which is the alphabet fixing graph
$\Gamma_n$. As $G_m$ is subregular, any automorphism of $\Gamma_n$ which fixes a
vertex must fix all of $\Gamma_n$. Therefore, as $\psi(u) = u$ we must have that
$\psi$ is the identity on $V$.
\end{proof}

\begin{lemma}
\label{inductive_step}
If $\phi \in \myaut{G_m}$ and $X, Y, Z \subset B$ with the following properties
\begin{itemize}
	\item $X = \{ x_1, x_2, z_1, z_2, \dots, z_{n - 2} \}$,
	\item $Y = \{ y_1, y_2, z_1, z_2, \dots, z_{n - 2} \}$,
	\item $Z = \{ x_2, y_2, z_1, z_2, \dots, z_{n - 2} \}$,
	\item $\phi$ fixes all $v \in \myvert{G_m}$ with $\alpha(v) = X$ or
		$\alpha(v) = Y$,
\end{itemize}
then $\phi$ fixes all $v \in \myvert{G_m}$ with $\alpha(v) = Z$.
\end{lemma}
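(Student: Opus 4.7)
The plan is to exhibit one specific vertex $v$ of alphabet $Z$ that $\phi$ must fix pointwise, and then invoke Lemma~\ref{fixes_alpha_subs} to lift this to every vertex of alphabet $Z$. I would choose $v$ so that it is pinched between a vertex of alphabet $X$ and a vertex of alphabet $Y$ by alphabet-changing arcs, so that each of the two hypotheses gives an independent constraint on $\phi(v)$.

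Concretely, take $v = x_2 z_1 z_2 \dots z_{n-2} y_2$, which has alphabet $Z$. The vertex $u = x_1 x_2 z_1 \dots z_{n-2}$ has alphabet $X$, and the alphabet-changing rule that appends $y_2$ produces the arc $u \to v$. Symmetrically, the vertex $w = z_1 z_2 \dots z_{n-2} y_2 y_1$ has alphabet $Y$, and the alphabet-changing rule that appends $y_1$ produces the arc $v \to w$.

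Next I would use alphabet stability, which (as observed just after its definition) means that $\phi$ sends alphabet-changing arcs to alphabet-changing arcs. Since $\phi(u) = u$ and $u \to v$ is alphabet changing, the image arc $u \to \phi(v)$ is also alphabet changing, so $\phi(v)$ is one of the alphabet-changing successors of $u$; this forces $\phi(v) = x_2 z_1 \dots z_{n-2} y$ for some $y \notin X$. Symmetrically, because $\phi(w) = w$ and $v \to w$ is alphabet changing, $\phi(v)$ is one of the alphabet-changing predecessors of $w$; this forces $\phi(v) = b z_1 \dots z_{n-2} y_2$ for some $b \notin Y$. Intersecting the two forms leaves the single possibility $\phi(v) = x_2 z_1 \dots z_{n-2} y_2 = v$, and Lemma~\ref{fixes_alpha_subs} then closes the argument.

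The main (mild) obstacle is selecting $v$ so that two independent fixed-alphabet arcs pin $\phi(v)$ uniquely: alphabet stability only yields that $\phi$ preserves the partition of vertices by alphabet \emph{as a set}, and bootstrapping to a pointwise fixed vertex of alphabet $Z$ is exactly what the symmetric placement of $x_2$ at the front of $v$ and $y_2$ at its back achieves, by simultaneously fixing the first and last letters of $\phi(v)$.
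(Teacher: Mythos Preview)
Your argument is correct. Both your proof and the paper's proceed by pinning down a single vertex of alphabet $Z$ using alphabet stability together with the fixed $X$- and $Y$-blocks, and then invoking Lemma~\ref{fixes_alpha_subs}. The difference is in the configuration used to do the pinning.

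The paper chooses $u = x_1 z_1 \dots z_{n-2} x_2$ (alphabet $X$) and $v = y_1 z_1 \dots z_{n-2} y_2$ (alphabet $Y$) and argues that among all pairs $(w,w')$ with $u \to w$, $v \to w'$ and $\alpha(w)=\alpha(w')$, there is exactly one, namely $w = z_1 \dots z_{n-2} x_2 y_2$ and $w' = z_1 \dots z_{n-2} y_2 x_2$; since $\phi$ fixes $u,v$ and preserves the relation ``same alphabet'', it must fix this unique pair. You instead sandwich a single vertex of alphabet $Z$ along a length-two path $X \to Z \to Y$: the out-constraint from the fixed $X$-vertex determines positions $1,\dots,n-1$ of $\phi(v)$, the in-constraint from the fixed $Y$-vertex determines positions $2,\dots,n$, and the intersection is a single word. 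Your route avoids the pair-uniqueness bookkeeping and is marginally more direct; the paper's route has the small conceptual advantage of only using out-arcs, which mirrors how Lemma~\ref{fixer_is_identity} is subsequently applied. Either way the core mechanism---alphabet stability plus Lemma~\ref{fixes_alpha_subs}---is identical.
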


\begin{proof}
Let $u = x_1 z_1 z_2 \dots z_{n - 2} x_2, v = y_1 z_1 z_2 \dots z_{n - 2} y_2$
and suppose we have $w, w' \in \myvert{G_m}$ such that
$u \rightarrow w, v \rightarrow w'$ and $\alpha(w) = \alpha(w')$.
As $|X \cap Y| = 2$, we must have that both $u \rightarrow w$ and
$v \rightarrow w'$ are alphabet changing rules. Therefore,
$x_1 \not\in \alpha(w), x_2 \in \alpha(w), y_1 \not\in \alpha(w')$ and
$y_2 \in \alpha(w')$. Hence we have
$\alpha(w) \supseteq (X \cap Y) \cup \{ x_2, y_2 \}$, but
$|\alpha(w)| = |(X \cap Y) \cup \{ x_2, y_2 \}|$, and hence we have equality.
We now see the rule $u \rightarrow w$ must introduce the letter $y_2$, and
$w = z_1 z_2 \dots z_{n - 2} x_2 y_2$. Similarly,
$w' = z_1 z_2 \dots z_{n - 2} y_2 x_2$. By our assumptions on $\phi$ we have
$\phi(u) = u$, $\phi(v) = v$, and $\alpha(\phi(w)) = \alpha(\phi(w'))$ as
$\alpha(w) = \alpha(w')$ and $G_m$ is alphabet stable. Hence we have
$u \rightarrow \phi(w)$ and $v \rightarrow \phi(w')$ with
$\alpha(\phi(w)) = \alpha(\phi(w'))$, so $\phi(w) = w$ and $\phi(w') = w'$.
Now applying Lemma~\ref{fixes_alpha_subs} we get the desired result.
\end{proof}

\begin{lemma}
\label{fixer_is_identity}
The only $\phi \in \myaut{G_m}$ which fixes a vertex $u \in \myvert{G_m}$ and
all $v \in \myvert{G_m}$ such that $u \rightarrow v$ is the identity.
\end{lemma}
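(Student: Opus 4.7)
The plan is to iteratively enlarge the set of alphabets whose vertices $\phi$ fixes until every $n$-subset of $B$ is covered, at which point Lemma~\ref{fixes_alpha_subs} forces $\phi$ to be the identity on all of $G_m$. To begin, applying Lemma~\ref{fixes_alpha_subs} to $u$ and to each alphabet-changing out-neighbor $x_2 x_3 \dots x_n y$ of $u$ (for $y \in B \setminus \alpha(u)$) shows that $\phi$ fixes every vertex whose alphabet lies in the set $\mathcal{A}_0 = \{\alpha(u)\} \cup \{\alpha(u) \setminus \{x_1\} \cup \{y\} : y \in B \setminus \alpha(u)\}$.

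The main obstacle is that every pair of alphabets in $\mathcal{A}_0$ shares at least $n - 1$ letters, so Lemma~\ref{inductive_step} does not apply directly. To overcome this, I would first show $\phi$ fixes every vertex with a ``1-swap'' alphabet $\alpha(u) \setminus \{x_i\} \cup \{z\}$ for any $2 \le i \le n$ and $z \in B \setminus \alpha(u)$. Fix $i$ and $z$, and let $v_i = x_i x_1 x_2 \dots x_{i-1} x_{i+1} \dots x_n$, which has alphabet $\alpha(u)$ and is hence fixed by $\phi$. By alphabet stability, $\phi$ permutes the alphabet-changing out-neighbors of $v_i$, all of the form $w_{i, z'} = x_1 x_2 \dots x_{i-1} x_{i+1} \dots x_n z'$. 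Consider the alphabet-changing arc from $w_{i, z}$ which introduces the letter $x_i$, with target $w'_{i, z} = x_2 x_3 \dots x_{i-1} x_{i+1} \dots x_n z x_i$: its alphabet $\{x_2, \dots, x_n, z\}$ lies in $\mathcal{A}_0$, so $\phi(w'_{i, z}) = w'_{i, z}$. Since $\phi$ preserves alphabet-changing arcs, $\phi(w_{i, z})$ is simultaneously an alphabet-changing in-neighbor of $w'_{i, z}$ and a vertex of the form $w_{i, z'}$; the only such vertex is $w_{i, z}$ itself. A further application of Lemma~\ref{fixes_alpha_subs} then fixes every vertex with a 1-swap alphabet.

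Finally, I would run an induction on $k$ to extend from 1-swap alphabets up to arbitrary $k$-swap alphabets. For any $(k+1)$-swap alphabet $A = \alpha(u) \setminus \{x_{i_1}, \dots, x_{i_{k+1}}\} \cup \{z_1, \dots, z_{k+1}\}$, the two $k$-swap alphabets $X = A \setminus \{z_1\} \cup \{x_{i_1}\}$ and $Y = A \setminus \{z_2\} \cup \{x_{i_2}\}$ satisfy $|X \cap Y| = n - 2$ with $X \setminus Y = \{x_{i_1}, z_2\}$ and $Y \setminus X = \{x_{i_2}, z_1\}$, so Lemma~\ref{inductive_step}, applied with the distinguished new letters taken as $z_2 \in X \setminus Y$ and $z_1 \in Y \setminus X$, yields that $\phi$ fixes every vertex with alphabet $A$. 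Iterating from $k = 1$ up to $k = \min(n, m - n)$ exhausts every $n$-subset of $B$, and the conclusion follows.
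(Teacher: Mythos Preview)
Your proof is correct and follows essentially the same skeleton as the paper: induct on the number of letters in $\alpha(v)$ that have been swapped out of $\alpha(u)$, and use Lemma~\ref{inductive_step} to pass from $k$-swap alphabets to $(k+1)$-swap alphabets. The only substantive difference is how the $1$-swap base case is established. The paper gets the alphabets $\alpha(u)\setminus\{x_i\}\cup\{y\}$ by invoking Lemma~\ref{inductive_step} on the pair $\{x_1,\dots,x_n\}$ and $\{x_2,\dots,x_n,y\}$, which share $n-1$ letters and therefore requires reading that lemma with the degenerate convention $x_1=y_1$. You instead give a self-contained argument: fix $v_i=x_ix_1\dots x_{i-1}x_{i+1}\dots x_n$, use alphabet stability to see that $\phi$ permutes its alphabet-changing out-neighbours $w_{i,z'}$, and then pin down $\phi(w_{i,z})=w_{i,z}$ by following the alphabet-changing arc $w_{i,z}\to w'_{i,z}$ into an already-fixed alphabet in $\mathcal{A}_0$. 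This is a cleaner way to handle the base case and avoids any ambiguity about the hypotheses of Lemma~\ref{inductive_step}; your inductive step likewise chooses $X,Y$ with $|X\cap Y|=n-2$ so that the lemma applies on the nose.
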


\begin{proof}
We may label $u$ as $x_1 x_2 \dots x_n$ taking $B$ to be
$\{ x_1, x_2, \dots, x_n, y_1, y_2, \dots, y_{m - n} \}$. For a vertex $v$,
define $f(v) = |\{ x_1, x_2, \dots, x_n \} \cap \alpha(v)|$. We show by
induction for $n \geq k \geq 0$ that $\phi$ fixes all $v$ such that $f(v) = k$.

For $k = n$, take any $v$ with $f(v) = n$. We have $\alpha(v) = \alpha(u)$, and
$u$ is fixed by $\phi$. Hence by Lemma~\ref{fixes_alpha_subs} $\phi$ fixes $v$
also.

For $k = n - 1$, we have $f(v) = n - 1$. First we consider
$\alpha(v) = \{ x_2, x_3, \dots, x_n, y \}$. In this case the vertex
$v' = x_2 x_3 \dots x_n y$ satisfies $u \rightarrow v'$ and so $\phi(v') = v'$.
Hence, as $\alpha(v) = \alpha(v')$ we use Lemma~\ref{fixes_alpha_subs} again to
show that $\phi(v) = v$. For other $v$ with $f(v) = n - 1$, without loss of
generality we may assume $\alpha(v) = \{ x_1, x_2, \dots, x_{n - 1}, y \}$.
Applying Lemma~\ref{inductive_step} to the sets $\{ x_1, x_2, \dots, x_n \}$ and
$\{ x_2, x_3, \dots, x_n, y \}$ we see that $v$ is fixed.

For $k = c$ given the inductive hypothesis for $k = c + 1$, let
$v \in \myvert{G_m}$ such that
\[ \alpha(v) = \{ x_1, \dots, x_c, y_1, \dots, y_{n - c} \}. \]
By applying the
inductive hypothesis and Lemma~\ref{fixes_alpha_subs} to the sets
\[
	\{ x_1, \dots, x_{c + 1}, y_1, \dots, y_{n - c - 1} \}
	\quad \text{and} \quad
	\{ x_1, \dots, x_{c + 1}, y_2, \dots, y_{n - c} \}
\]
we get the desired result.
\end{proof}

\begin{proposition}
$\myaut{G_m} \cong S_m$.
\end{proposition}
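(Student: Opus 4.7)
The plan is an orbit--stabiliser count that squeezes $\myaut{G_m}$ between $H \cong S_m$ from Lemma~\ref{symmetric_group_lemma} and a stabiliser bound coming from the preceding lemmas. First I would note that the action of $H$ on $\myvert{G_m}$ is transitive (any length-$n$ word of distinct letters can be sent to any other by a suitable permutation of $B$), so $\myaut{G_m}$ is transitive too, and to show $\myaut{G_m} = H$ it suffices by orbit--stabiliser to prove they have the same stabiliser order at a single chosen vertex $u = x_1 x_2 \dots x_n$. The $H$-stabiliser of $u$ clearly consists of the $(m - n)!$ permutations of $B$ that fix $\alpha(u)$ pointwise.

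For the upper bound I would analyse how an arbitrary $\phi \in \myaut{G_m}$ fixing $u$ can act on the out-neighbours of $u$. By Lemma~\ref{fixes_alpha_subs}, such a $\phi$ fixes every vertex sharing an alphabet with $u$, and in particular every alphabet-fixing out-neighbour. Alphabet stability then forces $\phi$ to permute the alphabet-changing out-neighbours of $u$ among themselves; as there are exactly $m - n$ of these, one per $y \in B \setminus \alpha(u)$, the induced permutation takes one of at most $(m - n)!$ values. Lemma~\ref{fixer_is_identity} upgrades this count to an injection: if two vertex-stabilising automorphisms act identically on the out-neighbours of $u$, then their quotient fixes $u$ together with all its out-neighbours and hence is the identity. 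This yields $|\mathrm{Stab}_{\myaut{G_m}}(u)| \leq (m - n)! = |\mathrm{Stab}_H(u)|$, and combined with $H \leq \myaut{G_m}$ we conclude $|\myaut{G_m}| = |H| = m!$ and therefore $\myaut{G_m} = H \cong S_m$.

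The step I expect to be the main obstacle is the alphabet-stability argument in the middle: I must be careful that $\phi$ really does send alphabet-changing out-neighbours to alphabet-changing out-neighbours rather than mixing them with the alphabet-fixing ones, and that the $m - n$ alphabet-changing out-neighbours are genuinely distinct (which they are, because $x_2 x_3 \dots x_n y$ is determined by $y$). Once this is in place, and Lemma~\ref{fixer_is_identity} is invoked to turn a permutation count on out-neighbours into a bound on the full stabiliser, the remainder is a routine order comparison.
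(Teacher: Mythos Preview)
Your proof is correct and uses the same two key ingredients as the paper's argument, namely alphabet stability and Lemma~\ref{fixer_is_identity}, but packages them differently. The paper argues by direct membership rather than by cardinality: given an arbitrary $\phi \in \myaut{G_m}$, it picks $u$ and constructs a single $\psi \in H$ sending $\phi(u)$ to $u$ and each $\phi(v)$ to $v$ for the alphabet-changing out-neighbours $v$ of $u$ (alphabet stability is what guarantees this really specifies a permutation of $B$); then $\psi\circ\phi$ fixes $u$ and all its out-neighbours, so Lemma~\ref{fixer_is_identity} forces $\psi\circ\phi=\mathrm{id}$ and hence $\phi=\psi^{-1}\in H$. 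Your route instead bounds $|\mathrm{Stab}_{\myaut{G_m}}(u)|$ by the $(m-n)!$ permutations of the alphabet-changing out-neighbours and invokes orbit--stabiliser. Both arguments are short and rest on the same lemmas; the paper's version is marginally more direct in that it never needs to compute $|\mathrm{Stab}_H(u)|$ or appeal to transitivity explicitly, while yours makes the underlying counting structure more visible. Your caution about the ``main obstacle'' is well placed but unnecessary to worry over: the paper already remarks, right after defining alphabet stability, that it is equivalent to every automorphism preserving the alphabet-changing/alphabet-fixing distinction on arcs, which is exactly what you need.
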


\begin{proof}
Let $H \leq \myaut{G_m}$ as defined in Lemma~\ref{symmetric_group_lemma}.
Suppose $\phi \in \myaut{G_m}$. Consider some $u \in \myvert{G_m}$, and define
$\psi \in H$ such that $\psi(\phi(u)) = u$ and for all $v \in \myvert{G_m}$ with
$u\rightarrow v$ via an alphabet changing rule we have $\psi(\phi(v)) = v$. Note
that $\psi$ is guaranteed to exist as alphabet stability guarantees this process
of defining $\psi$ corresponds to defining a unique permutation. We now consider
the automorphism $\psi \circ \phi$, which by Lemma~\ref{fixer_is_identity} must
be the identity. Hence $\psi = \phi^{-1}$ and $\phi \in H$.
\end{proof}

Now we see that alphabet stability and subregularity are sufficient conditions
to guarantee $\myaut{G_m} \cong S_m$, we devote the remainder of this section to
creating tests to determine when a family of word graphs is alphabet stable and
subregular. Our tests will only concern the counting of certain paths in the
alphabet fixing subgraph. In the following we consider the word graph
$G_m \in \mywgp{\Pi_n}$ with alphabet fixing subgraph $\Gamma_n$.

\begin{lemma}
\label{unique_n_path}
If $u, v \in \myvert{G_m}$ such that $u \rightarrow v$ and
$\alpha(u) \neq \alpha(v)$, then there is a unique path of length $n$ from $v$
to $u$.
\end{lemma}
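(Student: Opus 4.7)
The plan begins by observing that since alphabet fixing rules preserve the alphabet and $\alpha(u) \neq \alpha(v)$, the arc $u \to v$ must be alphabet changing. Writing $u = x_1 x_2 \dots x_n$, this forces $v = x_2 x_3 \dots x_n y$ for some $y \in B \setminus \alpha(u)$. For existence, I would exhibit the natural candidate: the path in which the $i$-th arc is the alphabet changing rule appending $x_i$. A direct check shows the intermediate vertices take the form $u_i = x_{i+2} x_{i+3} \dots x_n y x_1 x_2 \dots x_i$ (collapsing at the ends to $u_{n-1} = y x_1 x_2 \dots x_{n-1}$ and $u_n = u$), and that at each step the letter $x_i$ is absent from $u_{i-1}$, so each alphabet changing rule is applicable.

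For uniqueness, let $v = u_0, u_1, \dots, u_n = u$ be any path of length $n$. I would induct on $i$ from $1$ to $n$ with the statement that the $i$-th arc is alphabet changing with appended letter $x_i$ and that $u_i$ takes the form above. The inductive step is a pinching argument on $p_{x_i}$ that leans on Lemma~\ref{shift_one}. By the inductive hypothesis (or by inspection of $v$ in the base case $i = 1$) we have $x_i \notin \alpha(u_{i-1})$, so $p_{x_i}(u_{i-1}) = 0$; on the other hand $p_{x_i}(u_n) = i$. Hence $x_i$ must be introduced at some step $k \geq i$ by an alphabet changing rule (since only such rules alter the alphabet), placing $x_i$ at position $n$. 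The position then drops by at most one per step, yielding $n - k \geq n - i$, and combined with $k \geq i$ this forces $k = i$. The effect of an alphabet changing rule with a specified appended letter is deterministic, so $u_i$ is then pinned down to the claimed form.

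I expect the main technical point to be the terminal step $i = n$, where one must check both that the appended letter is $x_n$ and that the letter $y$ is properly removed. This follows from the inductive form $u_{n-1} = y x_1 x_2 \dots x_{n-1}$ together with the constraint $u_n = u$, which determines the appended letter uniquely; alternatively one may run a symmetric argument on $p_y$, which by Lemma~\ref{shift_one} and the fact that $y$ can only leave the word when at position $1$ must decrease by exactly one per step from $n$ in $v$ to $0$ in $u$, so $y$ is removed at step $n$.
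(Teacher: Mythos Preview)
Your proposal is correct and follows essentially the same approach as the paper: normalise $u = x_1 x_2 \dots x_n$ and $v = x_2 x_3 \dots x_n y$, then use the position function $p_{x_i}$ together with Lemma~\ref{shift_one} (equivalently Corollary~\ref{path_len_pos}) to force the $i$-th arc to be the alphabet changing rule introducing $x_i$. Your write-up is in fact more complete than the paper's, which omits the explicit existence construction and the terminal-step discussion you include.
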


\begin{proof}
Without loss of generality we may take $u = x_1 x_2 \dots x_n$ and
$v = x_2 x_3 \dots x_n y$. Considering a path
$u = u_0 \rightarrow u_1 \rightarrow \dots \rightarrow u_k = v$ with $k \leq n$,
we may repeatedly apply Corollary~\ref{path_len_pos} whilst considering
$p_{x_i}(u_{i - 1})$ and $p_{x_i}(u_k)$ to deduce that
$u_{i - 1} \rightarrow u_i$ by the alphabet changing rule which introduces
$x_i$.
\end{proof}

Now suppose for all $u, v \in \myvert{\Gamma_n}$ with $u \rightarrow v$ we have
either more than one path from $v$ to $u$ of length $n$, or we have a path of
length less than $n$ from $v$ to $u$.

\begin{lemma}
\label{no_exchanges}
There is no $\phi \in \myvert{G_m}$ with $\phi(u) = u$ and
$\alpha(\phi(v)) \neq \alpha(u)$.
\end{lemma}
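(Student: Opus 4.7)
The plan is to argue by contradiction: assume there is some $\phi \in \myaut{G_m}$ with $\phi(u) = u$ but $\alpha(\phi(v)) \neq \alpha(u)$, and derive a contradiction by counting short paths from $\phi(v)$ back to $u$.

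First, since $\phi$ is an automorphism that fixes $u$, applying it to the arc $u \rightarrow v$ yields an arc $u \rightarrow \phi(v)$ in $G_m$. The hypothesis $\alpha(\phi(v)) \neq \alpha(u)$ then forces this arc to be alphabet-changing, so without loss of generality I may write $u = x_1 x_2 \dots x_n$ and $\phi(v) = x_2 x_3 \dots x_n y$ for some $y \notin \alpha(u)$.

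Next, I would apply the two path-counting facts already established about alphabet-changing successors. Lemma~\ref{unique_n_path} gives a unique path of length $n$ from $\phi(v)$ to $u$, and Corollary~\ref{path_len_pos}, applied to the letter $x_1$ (which satisfies $p_{x_1}(\phi(v)) = 0$ and $p_{x_1}(u) = 1$, so that the corollary yields the bound $n + 1 - 1 = n$), rules out any path from $\phi(v)$ to $u$ of length strictly less than $n$. Together these say that there is precisely one path of length at most $n$ from $\phi(v)$ to $u$.

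To finish, I would transport paths across $\phi$. The running assumption on $\Gamma_n$ applied to the pair $(u, v)$ provides either a path of length less than $n$ from $v$ to $u$, or two distinct paths of length exactly $n$ from $v$ to $u$, both sitting inside $\Gamma_n \subseteq G_m$. Because $\phi$ is a graph automorphism fixing $u$, it carries any such path bijectively and length-preservingly to a path from $\phi(v)$ to $u$ of the same length. In either case this contradicts the unique-path conclusion of the previous paragraph. The only subtle step is picking the right letter (namely $x_1$) to witness Corollary~\ref{path_len_pos}; everything else is a routine transfer of paths through $\phi$.
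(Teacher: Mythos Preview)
Your argument is correct and follows the same route as the paper's. The paper's proof is extremely terse—it only records that $u\to\phi(v)$ is alphabet changing and then invokes Lemma~\ref{unique_n_path} to get a unique shortest path of length $n$ from $\phi(v)$ to $u$, leaving the reader to supply the transport-via-$\phi$ contradiction against the standing hypothesis on $\Gamma_n$. You make that final step explicit, and you also separate out Corollary~\ref{path_len_pos} to exclude paths of length $<n$, whereas the paper silently folds this into the ``unique shortest path'' phrasing (the proof of Lemma~\ref{unique_n_path} does treat all $k\le n$). No gap.
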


\begin{proof}
If such a $\phi$ exists then $u \rightarrow \phi(v)$ by an alphabet changing
rule. Hence there is a unique shortest path of length $n$ connecting
$\phi(v)$ to $u$.
\end{proof}

\begin{proposition}
$G_m$ is alphabet stable.
\end{proposition}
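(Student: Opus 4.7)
The plan is to show that any $\phi \in \myaut{G_m}$ sends every alphabet fixing arc to an alphabet fixing arc; alphabet stability then follows by propagating this edge-level fact along alphabet-fixing walks within a common alphabet class. This essentially recycles the mechanism of Lemma~\ref{no_exchanges}, but without the auxiliary condition that $\phi$ fix a chosen vertex.

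To begin, I would fix an arbitrary alphabet fixing arc $u \to v$. Since $\alpha(u) = \alpha(v)$, the vertices $u$ and $v$ lie in the induced copy of $\Gamma_n$ on $\alpha(u)$, so the standing hypothesis supplies either a walk from $v$ to $u$ of length strictly less than $n$, or at least two distinct walks from $v$ to $u$ of length exactly $n$. These are walks in $G_m$, so applying $\phi$ yields walks from $\phi(v)$ to $\phi(u)$ of the same lengths and the same number.

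The crux is ruling out that $\phi(u) \to \phi(v)$ could be alphabet changing, which I would handle by contradiction. If it were, Corollary~\ref{path_len_pos} forces every walk from $\phi(v)$ to $\phi(u)$ to have length at least $n$, and Lemma~\ref{unique_n_path} forces the length-$n$ walk to be unique. Each possibility directly contradicts one branch of the dichotomy inherited from $v \to u$, so $\phi(u) \to \phi(v)$ must be alphabet fixing. I expect this contradiction step to be the main subtlety, since it is the only place where the delicate counting hypothesis on return walks is actually consumed.

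Finally, to pass from edges to vertices, given any $u, v \in \myvert{G_m}$ with $\alpha(u) = \alpha(v)$, I would take a walk from $u$ to $v$ inside the induced copy of $\Gamma_n$ on $\alpha(u)$ and apply the edge-level statement along each arc, concluding $\alpha(\phi(u)) = \alpha(\phi(v))$ and hence that $G_m$ is alphabet stable.
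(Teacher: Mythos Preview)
Your argument is correct and follows essentially the same route as the paper: both reduce to showing that an automorphism cannot send an alphabet-fixing arc to an alphabet-changing one (the paper packages this step as Lemma~\ref{no_exchanges}, and your direct version actually sidesteps the unused hypothesis $\phi(u)=u$ there), and then propagate along a walk within the common alphabet class. The one point worth tightening is your final step: the existence of a walk from $u$ to $v$ inside the induced copy of $\Gamma_n$ is not automatic from the definitions, and the paper handles this by taking a path of length at most $n$ in $G_m$ and arguing via Corollary~\ref{path_len_pos} that no step of it can leave the alphabet class.
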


\begin{proof}
Suppose $G_m$ is not alphabet stable.
Let $\phi \in \myaut{G_m}$ and $u, v \in \myvert{G_m}$ such that
$\alpha(u) = \alpha(v)$ and $\alpha(\phi(u)) \neq \alpha(\phi(v))$. Consider a
path from $u$ to $v$ of length at most $n$, say
$u = u_0 \rightarrow \dots \rightarrow u_k = v$.
Lemma~\ref{unique_n_path} shows that $\alpha(u) = \alpha(u_i)$ for each $i$.
Now consider the path $\phi(u_0) \rightarrow \dots \rightarrow \phi(u_k)$. As
$\alpha(\phi(u_0)) = \alpha(\phi(u)) \neq \alpha(\phi(v)) = \alpha(\phi(u_k))$
there must be some $c$ such that
$\alpha(\phi(u_c)) \neq \alpha(\phi(u_{c + 1}))$. Hence, we have
$u_c \rightarrow u_{c + 1}$ and $\alpha(u_c) = \alpha(u_{c + 1})$, but
$\alpha(\phi(u_c)) \neq \alpha(\phi(u_{c + 1}))$, contradicting
Lemma~\ref{no_exchanges}.
\end{proof}

\begin{lemma}
If $\Gamma_n$ is not regular, there is an automorphism
$\phi \in \myaut{\Gamma_n}$ such that for some
$u, v \in \myvert{\Gamma_n}$ with $u \rightarrow v$
we have $\phi(u) = u$ but $\phi(v) \neq v$.
\end{lemma}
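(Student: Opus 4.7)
My plan is to combine the regular action of $S_n$ on $\Gamma_n$ by left multiplication with a forward-closure argument inside the Cayley graph. Since $\Gamma_n = \mycayley{\Pi_n}{S_n}$, the group $S_n$ embeds in $\myaut{\Gamma_n}$ as a regular (and in particular transitive) subgroup, so if $\Gamma_n$ is not regular then $|\myaut{\Gamma_n}| > n!$. By orbit--stabiliser applied to the $\myaut{\Gamma_n}$-action, the stabiliser of any vertex $u$ then has order strictly greater than one, giving a non-identity $\phi \in \myaut{\Gamma_n}$ with $\phi(u) = u$.

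To show that $\phi$, together with some out-neighbour of some fixed vertex, witnesses the lemma, I would argue by contradiction. Suppose no automorphism has the stated property. Then for every $\psi \in \myaut{\Gamma_n}$ and every vertex $w$ fixed by $\psi$, $\psi$ must also fix every $w'$ with $w \rightarrow w'$. Applied to $\phi$ at $u$ this gives that $\phi$ fixes every out-neighbour of $u$; applying it in turn at each such out-neighbour $u'$ shows $\phi$ fixes every out-neighbour of $u'$; iterating, $\phi$ fixes every vertex reachable from $u$ by a directed path in $\Gamma_n$.

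In the Cayley graph, the set of vertices reachable from $u$ by a directed path is the coset $u \cdot \langle \Pi_n \rangle$, and under the standing hypothesis (present throughout this section) that $\Pi_n$ is admissible and shift restricted, Lemma~\ref{contains_cycle} supplies cycles of every length in $\Pi_n$ and forces $\langle \Pi_n \rangle = S_n$. Hence this forward closure is all of $\myvert{\Gamma_n}$ and $\phi = \mathrm{id}$, contradicting $\phi \neq \mathrm{id}$.

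The main obstacle is justifying the generation step in the previous paragraph: if $\Pi_n$ does not generate $S_n$ then $\Gamma_n$ splits into disconnected components, each of which is a Cayley graph of $\langle \Pi_n \rangle$, and non-regularity can then arise from component-permuting automorphisms which fix every vertex of one component along with all of its out-neighbours, so the forward-closure argument by itself is not enough to produce the moved out-neighbour demanded by the lemma. The admissibility-and-shift-restriction hypothesis inherited from earlier in the section is exactly what bridges this gap.
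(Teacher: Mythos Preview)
Your argument is essentially the same as the paper's: obtain a non-identity automorphism fixing a vertex (the paper does this by composing an arbitrary $\phi\notin H$ with an element of the regular subgroup $H\cong S_n$; you use orbit--stabiliser, which amounts to the same thing), and then use strong connectivity of $\Gamma_n$ to locate an arc where one endpoint is fixed and the other is not. The paper phrases this last step as ``consider a path from $u$ to $v$ and find the first place where fixedness fails,'' whereas you phrase it as a forward-closure contradiction; these are the same idea.

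You are in fact more careful than the paper on one point: the paper simply writes ``consider a path from $u$ to $v$'' without ever saying why such a path exists, i.e.\ why $\langle\Pi_n\rangle=S_n$. You correctly flag this and correctly locate the needed hypothesis (admissibility plus shift restriction). The only place to tighten is the sentence ``Lemma~\ref{contains_cycle} supplies cycles of every length in $\Pi_n$ and forces $\langle\Pi_n\rangle=S_n$'': having permutations that \emph{contain} cycles of every length does not, by itself, force generation of $S_n$. What does the job is that, under shift restriction, a permutation containing an $n$-cycle must \emph{be} the cycle $(1\,2\,\cdots\,n)$, and one containing an $(n-1)$-cycle must be either $(1\,2\,\cdots\,n{-}1)$ or $(2\,3\,\cdots\,n)$; in either case the quotient with the $n$-cycle is an adjacent transposition, and an $n$-cycle together with an adjacent transposition generates $S_n$. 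With that one extra sentence your proof is complete and slightly cleaner than the paper's.
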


\begin{proof}
Let $H < \myaut{\Gamma_n}$ be regular, and let
$\phi \in \myaut{\Gamma_n} \setminus H$. Consider $u \in \myvert{\Gamma_n}$ and
let $\psi \in H$ be the automorphism such that $\psi(\phi(u)) = u$. Let
$\phi' = \psi \circ \phi$, so $\phi'$ fixes $u$. As $\phi \not\in H$, we must
have $\phi'$ is not the identity. Hence there is some $v \in \myvert{\Gamma_n}$
such that $\phi'(v) \neq v$. Consider a path from $u$ to $v$, we must encounter
a pair of vertices on the path such that $u' \rightarrow v'$, $\phi'(u') = u'$
and $\phi'(v') \neq v'$.
\end{proof}

\begin{corollary}
If for all $u, v, w \in \myvert{\Gamma_n}$ with $u \rightarrow v$ and
$u \rightarrow w$ there exists some $k$ such that the number of paths of length
$k$ from $v$ to $u$ is different to the number of paths of length $k$ from $w$
to $u$, then $\Gamma_n$ is regular.
\end{corollary}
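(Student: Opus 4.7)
The plan is to prove the contrapositive using the preceding lemma. Suppose $\Gamma_n$ is not regular; I will show that the hypothesis of the corollary fails, i.e. there exist $u, v, w \in \myvert{\Gamma_n}$ with $u \rightarrow v$ and $u \rightarrow w$ (and $v \neq w$) such that for every $k$, the number of paths of length $k$ from $v$ to $u$ equals the number of paths of length $k$ from $w$ to $u$.

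First, I would invoke the previous lemma to obtain $\phi \in \myaut{\Gamma_n}$ together with an edge $u \rightarrow v$ in $\Gamma_n$ such that $\phi(u) = u$ and $\phi(v) \neq v$. Setting $w = \phi(v)$, I get $w \neq v$ by construction, and since $\phi$ is an automorphism mapping $u$ to $u$ and $v$ to $w$, the arc $u \rightarrow v$ maps to the arc $u \rightarrow w$, so both $v$ and $w$ are out-neighbours of $u$.

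The final step is the path-counting observation: because $\phi$ is a graph automorphism, it induces a bijection between paths of length $k$ from $v$ to $u$ and paths of length $k$ from $\phi(v) = w$ to $\phi(u) = u$, sending $v = v_0 \rightarrow v_1 \rightarrow \dots \rightarrow v_k = u$ to $w = \phi(v_0) \rightarrow \phi(v_1) \rightarrow \dots \rightarrow \phi(v_k) = u$. Hence for every $k$ the two path counts agree, and taking this triple $(u, v, w)$ contradicts the hypothesis of the corollary.

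I do not expect a real obstacle here; the argument is essentially the contrapositive of the previous lemma augmented with the automorphism-preserves-path-counts observation. The only subtlety worth flagging is the implicit convention in the hypothesis that one should read it as quantifying over distinct $v, w$ (otherwise it is vacuous when $v = w$), which is exactly what is needed so that the pair produced by $\phi(v) \neq v$ supplies the desired counterexample.
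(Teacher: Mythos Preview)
Your proof is correct and is precisely the intended argument: the paper states this as a corollary of the preceding lemma without further proof, and your contrapositive argument---take the automorphism $\phi$ fixing $u$ but not $v$, set $w=\phi(v)$, and observe that $\phi$ gives a bijection between length-$k$ paths $v\to u$ and $w\to u$---is exactly the reasoning the corollary is meant to encapsulate. Your remark that the hypothesis must be read with $v\neq w$ is also apt.
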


We now combine these results and state our test. Let $G_m \in \mywgp{\Pi_n}$ be
a word graph with alphabet fixing subgraph $\Gamma_n$. Let
$u \in \myvert{\Gamma_n}$ be an arbitrary fixed vertex of $\Gamma_n$ and let
$\{ v_i \}$ be the set of vertices such that $u \rightarrow v_i$.

\begin{proposition}
If the following conditions are satisfied, then $\myaut{G_m} \cong S_m$.
\begin{itemize}
	\item each $v_i, v_j$ has some $k$ such that the number of paths of
		length $k$ from $v_i$ to $u$ is different to the number of paths
		of length $k$ from $v_j$ to $u$.
	\item each $v_i$ has either a path of length less than $n$ to $u$ or has
		more than one path of length $n$ to $u$.
\end{itemize}
\end{proposition}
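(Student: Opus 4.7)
The plan is to verify the two hypotheses of the earlier proposition stating that alphabet stability together with subregularity implies $\myaut{G_m} \cong S_m$. The two bullet conditions in the statement are exactly the path-counting criteria used in the earlier corollary (regularity of $\Gamma_n$) and the earlier proposition (alphabet stability), except stated only at the single fixed vertex $u$. So the main step is to upgrade ``for the fixed $u$'' to ``for every vertex of $\Gamma_n$''.

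This upgrade is essentially free: $\Gamma_n = \mycayley{\Pi_n}{S_n}$ is by construction a Cayley graph on $S_n$, so $S_n$ acts regularly on $\myvert{\Gamma_n}$ by left multiplication and gives a transitive group of graph automorphisms of $\Gamma_n$. Any graph automorphism of $\Gamma_n$ carries arcs to arcs, hence the set of out-neighbours of $u$ bijects onto the set of out-neighbours of any other vertex $u'$; moreover, for fixed $k$ the number of length-$k$ walks between two vertices is invariant under automorphism. Thus the first bullet at $u$ translates to the full hypothesis ``for all $u' \in \myvert{\Gamma_n}$ and all pairs of distinct out-neighbours $v', w'$ of $u'$ there exists $k$ distinguishing the walk counts from $v'$ and $w'$ back to $u'$'', and the second bullet at $u$ similarly translates to the full hypothesis on every out-neighbour of every vertex of $\Gamma_n$.

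With the first bullet lifted to every vertex of $\Gamma_n$, the preceding corollary yields $\myaut{\Gamma_n} \cong S_n$, i.e.\ $G_m$ is subregular. With the second bullet lifted, the proposition preceding it yields that $G_m$ is alphabet stable. The hypotheses of the earlier proposition giving $\myaut{G_m} \cong S_m$ for alphabet stable, subregular word graphs are then satisfied, and the conclusion follows.

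I do not expect any serious obstacle: the genuine content was already proved in the earlier lemmas, corollaries and propositions, and the only step here is the vertex-transitivity reduction, which is immediate from the Cayley structure of $\Gamma_n$. The one small piece of care is to note that the $S_n$ action preserves $\Gamma_n$ (it does not mix alphabet-fixing arcs with alphabet-changing arcs), so the paths referenced in the two bullets transport cleanly under the action.
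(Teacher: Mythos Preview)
Your proposal is correct and follows exactly the intended approach: the paper itself offers no proof beyond the sentence ``We now combine these results and state our test,'' and you have correctly identified that the combination requires lifting the two bullet conditions from the single fixed vertex $u$ to every vertex of $\Gamma_n$ via the vertex-transitivity of the Cayley graph $\Gamma_n = \mycayley{\Pi_n}{S_n}$, after which the earlier corollary (yielding subregularity) and the earlier proposition (yielding alphabet stability) apply directly. The only point the paper leaves entirely implicit is this vertex-transitivity reduction, which you have made explicit and justified cleanly.
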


\section{Introduction to G\'{o}mez Graphs} \label{intro_to_gomez}

In our account of G\'{o}mez graphs, we shall use a modified notation to that of
the original paper more appropriate to our purposes. We note that this paper
will only deal with the G\'{o}mez graphs corresponding to the graphs
$\mydg{k}{k}$ and $\mydg{k}{k + 1}$. The technique used herein does not work for
all $\mydg{k}{k'}$ where $k' \geq k$, which we shall provide explicity examples
to show, and runs into difficulty when pursued for the case $\mydg{k}{k + 2}$.
Hence, we only deal with the cases which provide the extreme examples in
degree-diameter as opposed to dealing with the entire family.

We begin by giving a definition of the alphabet fixing subgraphs $\Gamma_n$
of the G\'{o}mez graphs.
For any $n$, define $k$ so that either $n = 2k + 1$ or $n = 2k$, and let $B$
be any set such that $|B| = n$.
We define the graph $\Gamma_n = \langle V, E \rangle$ as follows. The set $V$ of
vertices is given by
$V = \{ x_1 x_2 \dots x_n | x_i \in B, x_i = x_j \Leftrightarrow i = j \}$,
that is $V$ is the set of all words of length $n$ on the alphabet $B$ with
distinct letters, and the set $E$ is given by the directed adjacencies
\[
	x_1 x_2 \dots x_n \rightarrow \begin{cases}
		x_2 x_3 \dots x_{k - i} x_1
			x_{k - i + 2} x_{k - i + 3} \dots x_n x_{k - i + 1}, &
			\text{for $0 \leq i < k$}, \\
		x_2 x_3 \dots x_n x_1.
	\end{cases}
\]
Informally, each of these rules splits the word into a left and right half and
rotates each half by one. The size of the left half is not allowed to exceed
that of the right, and we also allow an empty left half. Now we give the example
adjacencies for the cases $n = 6$ and $n = 7$ with the left and right halfs
coloured for clarity.
\begin{table}[H]
\centering
\begin{tabular}{c|c}
$n = 6$ & $n = 7$ \\ \hline
$ x_1 x_2 x_3 x_4 x_5 x_6 \rightarrow \begin{cases}
	\textcolor{blue}{x_2 x_3 x_1}\textcolor{red}{x_5 x_6 x_4} \\
	\textcolor{blue}{x_2 x_1}\textcolor{red}{x_4 x_5 x_6 x_3} \\
	\textcolor{blue}{x_1}\textcolor{red}{x_3 x_4 x_5 x_6 x_2} \\
	\textcolor{red}{x_2 x_3 x_4 x_5 x_6 x_1}
\end{cases}$ &
$ x_1 x_2 x_3 x_4 x_5 x_6 x_7 \rightarrow \begin{cases}
	\textcolor{blue}{x_2 x_3 x_1}\textcolor{red}{x_5 x_6 x_7 x_4} \\
	\textcolor{blue}{x_2 x_1}\textcolor{red}{x_4 x_5 x_6 x_7 x_3} \\
	\textcolor{blue}{x_1}\textcolor{red}{x_3 x_4 x_5 x_6 x_7 x_2} \\
	\textcolor{red}{x_2 x_3 x_4 x_5 x_6 x_7 x_1}
\end{cases}$ \\
\end{tabular}
\end{table}
Now we introduce terminology and a visual representation of these rules which we
will make use of throughout our proof. First, we note that the graph $\Gamma_n$
has $k + 1$ rules, we shall call these rules $\pi_i$ for $0 \leq i \leq k$,
where rule $\pi_i$ is given by
$\pi_i(x_1 x_2 \dots x_n) = x_2 x_3 \dots x_{k - i} x_1
	x_{k - i + 2} x_{k - i + 3} \dots x_n x_{k - i + 1}$, and
$\pi_k(x_1 x_2 \dots x_n) = x_2 x_3 \dots x_n x_1$.
In this notation, we now show our visual representation of the rules in the case
$n = 8$.
\begin{center}
\begin{tikzpicture}
	\foreach \x in {1,...,8} {
		\draw (\x,  0) node {};
		\draw (\x,  1) node {};
		\draw (\x,  3) node {};
		\draw (\x,  4) node {};
		\draw (\x,  6) node {};
		\draw (\x,  7) node {};
		\draw (\x,  9) node {};
		\draw (\x, 10) node {};
		\draw (\x, 12) node {};
		\draw (\x, 13) node {};
	}
	\draw [->, gray] (1, 13) to (4, 12);
	\draw [->, gray] (2, 13) to (1, 12);
	\draw [->, gray] (3, 13) to (2, 12);
	\draw [->, gray] (4, 13) to (3, 12);
	\draw [->, gray] (5, 13) to (8, 12);
	\draw [->, gray] (6, 13) to (5, 12);
	\draw [->, gray] (7, 13) to (6, 12);
	\draw [->, gray] (8, 13) to (7, 12);
	\foreach \x in {1,...,8} {
		\node[fill=none, draw=none] at (\x, 13.5) {$x_\x$};
	}
	\node[fill=none, draw=none] at (1, 11.5) {$x_2$};
	\node[fill=none, draw=none] at (2, 11.5) {$x_3$};
	\node[fill=none, draw=none] at (3, 11.5) {$x_4$};
	\node[fill=none, draw=none] at (4, 11.5) {$x_1$};
	\node[fill=none, draw=none] at (5, 11.5) {$x_6$};
	\node[fill=none, draw=none] at (6, 11.5) {$x_7$};
	\node[fill=none, draw=none] at (7, 11.5) {$x_8$};
	\node[fill=none, draw=none] at (8, 11.5) {$x_5$};
	\node[fill=none, draw=none] at (0, 12.5) {$\pi_0$};
	\draw [->, gray] (1, 10) to (3, 9);
	\draw [->, gray] (2, 10) to (1, 9);
	\draw [->, gray] (3, 10) to (2, 9);
	\draw [->, gray] (4, 10) to (8, 9);
	\draw [->, gray] (5, 10) to (4, 9);
	\draw [->, gray] (6, 10) to (5, 9);
	\draw [->, gray] (7, 10) to (6, 9);
	\draw [->, gray] (8, 10) to (7, 9);
	\foreach \x in {1,...,8} {
		\node[fill=none, draw=none] at (\x, 10.5) {$x_\x$};
	}
	\node[fill=none, draw=none] at (1, 8.5) {$x_2$};
	\node[fill=none, draw=none] at (2, 8.5) {$x_3$};
	\node[fill=none, draw=none] at (3, 8.5) {$x_1$};
	\node[fill=none, draw=none] at (4, 8.5) {$x_5$};
	\node[fill=none, draw=none] at (5, 8.5) {$x_6$};
	\node[fill=none, draw=none] at (6, 8.5) {$x_7$};
	\node[fill=none, draw=none] at (7, 8.5) {$x_8$};
	\node[fill=none, draw=none] at (8, 8.5) {$x_4$};
	\node[fill=none, draw=none] at (0, 9.5) {$\pi_1$};
	\draw [->, gray] (1, 7) to (2, 6);
	\draw [->, gray] (2, 7) to (1, 6);
	\draw [->, gray] (3, 7) to (8, 6);
	\draw [->, gray] (4, 7) to (3, 6);
	\draw [->, gray] (5, 7) to (4, 6);
	\draw [->, gray] (6, 7) to (5, 6);
	\draw [->, gray] (7, 7) to (6, 6);
	\draw [->, gray] (8, 7) to (7, 6);
	\foreach \x in {1,...,8} {
		\node[fill=none, draw=none] at (\x, 7.5) {$x_\x$};
	}
	\node[fill=none, draw=none] at (1, 5.5) {$x_2$};
	\node[fill=none, draw=none] at (2, 5.5) {$x_1$};
	\node[fill=none, draw=none] at (3, 5.5) {$x_4$};
	\node[fill=none, draw=none] at (4, 5.5) {$x_5$};
	\node[fill=none, draw=none] at (5, 5.5) {$x_6$};
	\node[fill=none, draw=none] at (6, 5.5) {$x_7$};
	\node[fill=none, draw=none] at (7, 5.5) {$x_8$};
	\node[fill=none, draw=none] at (8, 5.5) {$x_3$};
	\node[fill=none, draw=none] at (0, 6.5) {$\pi_2$};
	\draw [->, gray] (1, 4) to (1, 3);
	\draw [->, gray] (2, 4) to (8, 3);
	\draw [->, gray] (3, 4) to (2, 3);
	\draw [->, gray] (4, 4) to (3, 3);
	\draw [->, gray] (5, 4) to (4, 3);
	\draw [->, gray] (6, 4) to (5, 3);
	\draw [->, gray] (7, 4) to (6, 3);
	\draw [->, gray] (8, 4) to (7, 3);
	\foreach \x in {1,...,8} {
		\node[fill=none, draw=none] at (\x, 4.5) {$x_\x$};
	}
	\node[fill=none, draw=none] at (1, 2.5) {$x_1$};
	\node[fill=none, draw=none] at (2, 2.5) {$x_3$};
	\node[fill=none, draw=none] at (3, 2.5) {$x_4$};
	\node[fill=none, draw=none] at (4, 2.5) {$x_5$};
	\node[fill=none, draw=none] at (5, 2.5) {$x_6$};
	\node[fill=none, draw=none] at (6, 2.5) {$x_7$};
	\node[fill=none, draw=none] at (7, 2.5) {$x_8$};
	\node[fill=none, draw=none] at (8, 2.5) {$x_2$};
	\node[fill=none, draw=none] at (0, 3.5) {$\pi_3$};
	\draw [->, gray] (1, 1) to (8, 0);
	\draw [->, gray] (2, 1) to (1, 0);
	\draw [->, gray] (3, 1) to (2, 0);
	\draw [->, gray] (4, 1) to (3, 0);
	\draw [->, gray] (5, 1) to (4, 0);
	\draw [->, gray] (6, 1) to (5, 0);
	\draw [->, gray] (7, 1) to (6, 0);
	\draw [->, gray] (8, 1) to (7, 0);
	\foreach \x in {1,...,8} {
		\node[fill=none, draw=none] at (\x, 1.5) {$x_\x$};
	}
	\node[fill=none, draw=none] at (1, -0.5) {$x_2$};
	\node[fill=none, draw=none] at (2, -0.5) {$x_3$};
	\node[fill=none, draw=none] at (3, -0.5) {$x_4$};
	\node[fill=none, draw=none] at (4, -0.5) {$x_5$};
	\node[fill=none, draw=none] at (5, -0.5) {$x_6$};
	\node[fill=none, draw=none] at (6, -0.5) {$x_7$};
	\node[fill=none, draw=none] at (7, -0.5) {$x_8$};
	\node[fill=none, draw=none] at (8, -0.5) {$x_1$};
	\node[fill=none, draw=none] at (0,  0.5) {$\pi_4$};
\end{tikzpicture}
\end{center}
Within this visual representation, we label the following features
\begin{table}[H]
\centering
\begin{tabular}{c|c|c}
Diagram & Red & Blue \\ \hline \hline
\begin{tikzpicture}
	\foreach \x in {1,...,6} {
		\draw (\x, 1) node {};
		\draw (\x, 0) node {};
	}
	\draw [->, blue] (2, 1) to (1, 0);
	\draw [->, blue] (4, 1) to (3, 0);
	\draw [->, blue] (5, 1) to (4, 0);
	\draw [->, blue] (6, 1) to (5, 0);
	\draw [->, red]  (3, 1) to (6, 0);
	\draw [->, red]  (1, 1) to (2, 0);
\end{tikzpicture} & forward arrows & backward arrows \\ \hline
\begin{tikzpicture}
	\foreach \x in {1,...,6} {
		\draw (\x, 1) node {};
		\draw (\x, 0) node {};
	}
	\draw [->, gray] (2, 1) to (1, 0);
	\draw [->, gray] (4, 1) to (3, 0);
	\draw [->, gray] (5, 1) to (4, 0);
	\draw [->, gray] (6, 1) to (5, 0);
	\draw [->, blue] (3, 1) to (6, 0);
	\draw [->, red]  (1, 1) to (2, 0);
\end{tikzpicture} & left arrow & right arrow \\
\end{tabular}
\end{table}

\begin{lemma}
\label{num_arrows}
The number of right arrows in a path of length $m$ is $m$, and the number of
left arrows in a path of length $m$ is less than or equal to $m$.
\end{lemma}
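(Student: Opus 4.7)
The plan is to prove this by a direct count, reducing the claim to an observation about each individual rule $\pi_i$ and then summing over the $m$ edges of the path.

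First I would identify, for each rule $\pi_i$ in the alphabet fixing subgraph $\Gamma_n$, exactly which arrows in its diagram are ``right arrows'' and which are ``left arrows'' in the sense of the second table. Reading off the definition, for $0 \leq i < k$ the rule $\pi_i$ sends position $1$ to position $k-i$, positions $2,\ldots,k-i$ each shift left by one, position $k-i+1$ jumps to position $n$, and positions $k-i+2,\ldots,n$ each shift left by one. So $\pi_i$ splits into a left block and a right block; within each block all but one arrow is a backward arrow, and the one forward arrow in the left block is the ``left arrow'' while the one forward arrow in the right block is the ``right arrow''. Hence $\pi_i$ contains exactly one left arrow and exactly one right arrow.

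Next I would handle the boundary case $\pi_k$, whose left block is empty. The rule $\pi_k(x_1 x_2 \dots x_n) = x_2 x_3 \dots x_n x_1$ has positions $2,\ldots,n$ shifting left by one and position $1$ jumping to position $n$. So $\pi_k$ contains one right arrow (the jump from position $1$ to position $n$) and no left arrow at all.

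Putting these together, every rule $\pi_i$ contributes exactly one right arrow and at most one left arrow to its diagram. A path of length $m$ consists of $m$ edges, each corresponding to one such rule, so the total number of right arrows along the path is exactly $m$, and the total number of left arrows is at most $m$, with equality precisely when none of the $m$ edges is labelled by $\pi_k$. This is essentially the entire proof; the main (minor) obstacle is simply taking care over which of the two forward arrows in $\pi_i$ plays each role, and handling $\pi_k$ separately as a degenerate case.
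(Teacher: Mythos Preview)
Your proposal is correct and follows exactly the paper's own approach: the paper's proof is the one-line observation that each rule $\pi_i$ for $0 \leq i \leq k$ contains exactly one right arrow and either one or zero left arrows, which is precisely what you establish (with a bit more explicit detail about the degenerate case $\pi_k$).
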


\begin{proof}
Each of the rules $\pi_i$ for $0 \leq i \leq k$ contains exactly one right
arrow, and either one or zero left arrows.
\end{proof}

We represent the composition of rules as in the following diagram and call it a
\textit{path}. The following diagram shows the path $\pi_3 \pi_0 \pi_2$ when
$n = 8$.
\begin{center}
\begin{tikzpicture}
	\foreach \x in {1,...,8} {
		\draw (\x, 1) node {};
		\draw (\x, 2) node {};
		\draw (\x, 3) node {};
		\draw (\x, 4) node {};
		\draw (\x, 5) node {};
		\draw (\x, 6) node {};
	}
	\draw [->, gray] (3, 6) to (2, 5);
	\draw [->, gray] (4, 6) to (3, 5);
	\draw [->, gray] (5, 6) to (4, 5);
	\draw [->, gray] (6, 6) to (5, 5);
	\draw [->, gray] (7, 6) to (6, 5);
	\draw [->, gray] (8, 6) to (7, 5);
	\draw [->, gray] (2, 6) to (8, 5);
	\draw [->, gray] (1, 6) to (1, 5);
	\draw [->, gray] (2, 4) to (1, 3);
	\draw [->, gray] (3, 4) to (2, 3);
	\draw [->, gray] (4, 4) to (3, 3);
	\draw [->, gray] (6, 4) to (5, 3);
	\draw [->, gray] (7, 4) to (6, 3);
	\draw [->, gray] (8, 4) to (7, 3);
	\draw [->, gray] (5, 4) to (8, 3);
	\draw [->, gray] (1, 4) to (4, 3);
	\draw [->, gray] (2, 2) to (1, 1);
	\draw [->, gray] (4, 2) to (3, 1);
	\draw [->, gray] (5, 2) to (4, 1);
	\draw [->, gray] (6, 2) to (5, 1);
	\draw [->, gray] (7, 2) to (6, 1);
	\draw [->, gray] (8, 2) to (7, 1);
	\draw [->, gray] (3, 2) to (8, 1);
	\draw [->, gray] (1, 2) to (2, 1);
	\foreach \x in {1,...,8} {
		\node[fill=none, draw=none] at (\x, 6.5) {$x_\x$};
	}
	\node[fill=none, draw=none] at (1, 4.5) {$x_1$};
	\node[fill=none, draw=none] at (2, 4.5) {$x_3$};
	\node[fill=none, draw=none] at (3, 4.5) {$x_4$};
	\node[fill=none, draw=none] at (4, 4.5) {$x_5$};
	\node[fill=none, draw=none] at (5, 4.5) {$x_6$};
	\node[fill=none, draw=none] at (6, 4.5) {$x_7$};
	\node[fill=none, draw=none] at (7, 4.5) {$x_8$};
	\node[fill=none, draw=none] at (8, 4.5) {$x_2$};
	\node[fill=none, draw=none] at (1, 2.5) {$x_3$};
	\node[fill=none, draw=none] at (2, 2.5) {$x_4$};
	\node[fill=none, draw=none] at (3, 2.5) {$x_5$};
	\node[fill=none, draw=none] at (4, 2.5) {$x_1$};
	\node[fill=none, draw=none] at (5, 2.5) {$x_7$};
	\node[fill=none, draw=none] at (6, 2.5) {$x_8$};
	\node[fill=none, draw=none] at (7, 2.5) {$x_2$};
	\node[fill=none, draw=none] at (8, 2.5) {$x_6$};
	\node[fill=none, draw=none] at (1, 0.5) {$x_4$};
	\node[fill=none, draw=none] at (2, 0.5) {$x_3$};
	\node[fill=none, draw=none] at (3, 0.5) {$x_1$};
	\node[fill=none, draw=none] at (4, 0.5) {$x_7$};
	\node[fill=none, draw=none] at (5, 0.5) {$x_8$};
	\node[fill=none, draw=none] at (6, 0.5) {$x_2$};
	\node[fill=none, draw=none] at (7, 0.5) {$x_6$};
	\node[fill=none, draw=none] at (8, 0.5) {$x_5$};
	\node[fill=none, draw=none] at (0, 5.5) {$\pi_3$};
	\node[fill=none, draw=none] at (0, 3.5) {$\pi_0$};
	\node[fill=none, draw=none] at (0, 1.5) {$\pi_2$};
\end{tikzpicture}
\end{center}
In subsequent diagrams, we may drop the explicit labelling of letters to present
the same path in a more succinct manner as in the example below.
\begin{center}
\begin{tikzpicture}
	\foreach \x in {1,...,8} {
		\draw (\x, 1) node {};
		\draw (\x, 2) node {};
		\draw (\x, 3) node {};
		\draw (\x, 4) node {};
	}
	\draw [->, gray] (3, 4) to (2, 3);
	\draw [->, gray] (4, 4) to (3, 3);
	\draw [->, gray] (5, 4) to (4, 3);
	\draw [->, gray] (6, 4) to (5, 3);
	\draw [->, gray] (7, 4) to (6, 3);
	\draw [->, gray] (8, 4) to (7, 3);
	\draw [->, gray] (2, 4) to (8, 3);
	\draw [->, gray] (1, 4) to (1, 3);
	\draw [->, gray] (2, 3) to (1, 2);
	\draw [->, gray] (3, 3) to (2, 2);
	\draw [->, gray] (4, 3) to (3, 2);
	\draw [->, gray] (6, 3) to (5, 2);
	\draw [->, gray] (7, 3) to (6, 2);
	\draw [->, gray] (8, 3) to (7, 2);
	\draw [->, gray] (5, 3) to (8, 2);
	\draw [->, gray] (1, 3) to (4, 2);
	\draw [->, gray] (2, 2) to (1, 1);
	\draw [->, gray] (4, 2) to (3, 1);
	\draw [->, gray] (5, 2) to (4, 1);
	\draw [->, gray] (6, 2) to (5, 1);
	\draw [->, gray] (7, 2) to (6, 1);
	\draw [->, gray] (8, 2) to (7, 1);
	\draw [->, gray] (3, 2) to (8, 1);
	\draw [->, gray] (1, 2) to (2, 1);
	\node[fill=none, draw=none] at (0, 3.5) {$\pi_3$};
	\node[fill=none, draw=none] at (0, 2.5) {$\pi_0$};
	\node[fill=none, draw=none] at (0, 1.5) {$\pi_2$};
\end{tikzpicture}
\end{center}
We will refer to the \textit{trail} from position $i$ in a path to mean the
concatenation of consecutive arrows in our diagram starting from the arrow at
position $i$. In the following example we have highlighted the trail starting
at position 2.
\begin{center}
\begin{tikzpicture}
	\foreach \x in {1,...,8} {
		\draw (\x, 1) node {};
		\draw (\x, 2) node {};
		\draw (\x, 3) node {};
		\draw (\x, 4) node {};
	}
	\draw [->, gray] (3, 4) to (2, 3);
	\draw [->, gray] (4, 4) to (3, 3);
	\draw [->, gray] (5, 4) to (4, 3);
	\draw [->, gray] (6, 4) to (5, 3);
	\draw [->, gray] (7, 4) to (6, 3);
	\draw [->, gray] (8, 4) to (7, 3);
	\draw [->, red]  (2, 4) to (8, 3);
	\draw [->, gray] (1, 4) to (1, 3);
	\draw [->, gray] (2, 3) to (1, 2);
	\draw [->, gray] (3, 3) to (2, 2);
	\draw [->, gray] (4, 3) to (3, 2);
	\draw [->, gray] (6, 3) to (5, 2);
	\draw [->, gray] (7, 3) to (6, 2);
	\draw [->, red]  (8, 3) to (7, 2);
	\draw [->, gray] (5, 3) to (8, 2);
	\draw [->, gray] (1, 3) to (4, 2);
	\draw [->, gray] (2, 2) to (1, 1);
	\draw [->, gray] (4, 2) to (3, 1);
	\draw [->, gray] (5, 2) to (4, 1);
	\draw [->, gray] (6, 2) to (5, 1);
	\draw [->, red]  (7, 2) to (6, 1);
	\draw [->, gray] (8, 2) to (7, 1);
	\draw [->, gray] (3, 2) to (8, 1);
	\draw [->, gray] (1, 2) to (2, 1);
	\node[fill=none, draw=none] at (0, 3.5) {$\pi_3$};
	\node[fill=none, draw=none] at (0, 2.5) {$\pi_0$};
	\node[fill=none, draw=none] at (0, 1.5) {$\pi_2$};
\end{tikzpicture}
\end{center}
We will call a trail \textit{closed} if it begins and ends at the same position.
Here we illustrate a closed trail in blue, and a non-closed trail in red.
\begin{center}
\begin{tikzpicture}
	\foreach \x in {1,...,8} {
		\draw (\x, 1) node {};
		\draw (\x, 2) node {};
		\draw (\x, 3) node {};
		\draw (\x, 4) node {};
		\draw (\x, 5) node {};
	}
	\draw [->, gray] (1, 5) to (1, 4);
	\draw [->, gray] (2, 5) to (8, 4);
	\draw [->, blue] (3, 5) to (2, 4);
	\draw [->, gray] (4, 5) to (3, 4);
	\draw [->, gray] (5, 5) to (4, 4);
	\draw [->,  red] (6, 5) to (5, 4);
	\draw [->, gray] (7, 5) to (6, 4);
	\draw [->, gray] (8, 5) to (7, 4);
	\draw [->, gray] (1, 4) to (8, 3);
	\draw [->, blue] (2, 4) to (1, 3);
	\draw [->, gray] (3, 4) to (2, 3);
	\draw [->, gray] (4, 4) to (3, 3);
	\draw [->,  red] (5, 4) to (4, 3);
	\draw [->, gray] (6, 4) to (5, 3);
	\draw [->, gray] (7, 4) to (6, 3);
	\draw [->, gray] (8, 4) to (7, 3);
	\draw [->, blue] (1, 3) to (4, 2);
	\draw [->, gray] (2, 3) to (1, 2);
	\draw [->, gray] (3, 3) to (2, 2);
	\draw [->,  red] (4, 3) to (3, 2);
	\draw [->, gray] (5, 3) to (8, 2);
	\draw [->, gray] (6, 3) to (5, 2);
	\draw [->, gray] (7, 3) to (6, 2);
	\draw [->, gray] (8, 3) to (7, 2);
	\draw [->, gray] (1, 2) to (2, 1);
	\draw [->, gray] (2, 2) to (1, 1);
	\draw [->,  red] (3, 2) to (8, 1);
	\draw [->, blue] (4, 2) to (3, 1);
	\draw [->, gray] (5, 2) to (4, 1);
	\draw [->, gray] (6, 2) to (5, 1);
	\draw [->, gray] (7, 2) to (6, 1);
	\draw [->, gray] (8, 2) to (7, 1);
	\draw [-, dashed, gray] (3, 5) to (3, 1);
	\draw [-, dashed, gray] (6, 5) to (6, 1);
	\node[fill=none, draw=none] at (0, 4.5) {$\pi_3$};
	\node[fill=none, draw=none] at (0, 3.5) {$\pi_4$};
	\node[fill=none, draw=none] at (0, 2.5) {$\pi_0$};
	\node[fill=none, draw=none] at (0, 1.5) {$\pi_2$};
\end{tikzpicture}
\end{center}
We will call a path \textit{closed} if the trails starting at each position in
the path are closed.

With the terminology established, we now briefly describe our motivation.
In order to prove the G\'{o}mez graphs are subregular and alphabet stable, we
aim to count paths of lengths $n$ and $n - 1$ from each
neighbour of an arbitrary vertex $v \in \Gamma_n$ back to $v$. All of these
paths in $\Gamma_n$ correspond to cycles of lengths $n$ and $n + 1$ in
$\Gamma_n$, which correspond exactly to the closed paths we have just defined.
Hence, we now aim to count closed paths of lengths $n$ and $n + 1$,
considering what the first rule is on those paths.

For a path $p = p_0 p_1 \dots p_n$ we shall call
$p^i = p_{i + 1} p_{i + 2} \dots p_n p_1 \dots p_i$ the $i$\textsuperscript{th}
\textit{rotation} of $p$.

\begin{lemma}
\label{rotate}
There is a bijection between the closed trails of a path $p$ and the closed
paths of each of its rotations $p^i$.
\end{lemma}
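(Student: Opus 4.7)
The plan is to recast both sides of the claimed bijection in terms of permutations of positions, and then invoke the standard fact that for any two bijections $A, B$ of a finite set, $A$ restricts to a bijection $\mathrm{Fix}(BA) \to \mathrm{Fix}(AB)$. To set this up, I associate to each rule $p_k$ a permutation $\sigma_k$ of the positions $\{1, 2, \ldots, n\}$, defined by $\sigma_k(a) = b$ whenever the diagram for $p_k$ contains an arrow from position $a$ at the top to position $b$ at the bottom. A trail in a path $q_1 q_2 \cdots q_n$ starting at position $j$ at the top then ends at $(\sigma_{q_n} \circ \cdots \circ \sigma_{q_1})(j)$ at the bottom, so the closed trails of the path correspond exactly to the fixed points of this composite permutation.

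Applying this to $p = p_1 p_2 \cdots p_n$ and its rotation $p^i = p_{i+1} \cdots p_n p_1 \cdots p_i$, I set $A = \sigma_{p_i} \circ \cdots \circ \sigma_{p_1}$ and $B = \sigma_{p_n} \circ \cdots \circ \sigma_{p_{i+1}}$. Then the closed trails of $p$ are exactly the fixed points of $BA$, while the closed trails (equivalently, the closed paths) of $p^i$ are exactly the fixed points of $AB$. The candidate for the bijection is the map $j \mapsto A(j)$: if $BA(j) = j$, then $AB(A(j)) = A(BA(j)) = A(j)$, so $A$ carries $\mathrm{Fix}(BA)$ into $\mathrm{Fix}(AB)$, and since $A$ is itself a permutation of $\{1, \ldots, n\}$, its inverse provides the reverse correspondence.

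This map admits a clean diagrammatic reading: a closed trail of $p$ starting at position $j$ reaches the horizontal boundary between rules $p_i$ and $p_{i+1}$ at exactly position $A(j)$. In the rotation $p^i$ this boundary is brought to the top of the diagram, so the closed trail simply wraps around the cyclic reordering of rules to become the closed trail of $p^i$ starting at $A(j)$, and the inverse runs this process backwards.

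I do not anticipate any serious obstacle: the entire argument reduces to the well-known identity $|\mathrm{Fix}(AB)| = |\mathrm{Fix}(BA)|$ for permutations $A$ and $B$, interpreted through the diagrammatic language developed in the paper. The only points that require care are keeping the order of composition aligned with the top-to-bottom orientation of the diagrams, and indexing the rotations so that $A$ and $B$ correspond to the correct halves of $p$.
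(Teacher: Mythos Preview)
Your proof is correct and follows essentially the same approach as the paper: both identify closed trails with fixed points of the composite permutation and use the conjugation $j \mapsto A(j)$ to match $\mathrm{Fix}(BA)$ with $\mathrm{Fix}(AB)$. The only difference is cosmetic: the paper proves the one-step case (from $p$ to $p^2$, where $A = p_1$) and invokes a trivial induction, whereas you handle the general rotation $p^i$ directly by taking $A$ to be the composite of the first $i$ rules.
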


\begin{proof}
We demonstrate such a bijection between the closed trails of some path $p$ and
its rotation $p^2$, noting that the result follows from a trivial induction.
Let $p$ be a path, first we show that the trail at $i$ is closed in $p$ if, and
only if, the trail at $p_1(i)$ is closed in $p^2$.
\begin{align*}
	p(i) = i \quad &\Leftrightarrow \quad
			(p_2 p_3 \dots p_n)(p_1(i)) = i \\
		&\Leftrightarrow \quad
			(p_2 p_3 \dots p_n p_1)(p_1(i)) = p_1(i)
			\quad \Leftrightarrow \quad
			p^2(p_1(i)) = p_1(i).
\end{align*}
Hence, as $p_1$ is a bijection, we have a bijection between the closed trails of
$p$ and $p^2$.
\end{proof}

In light of Lemma~\ref{rotate}, we shall identify a closed trail starting at $i$
in a path $p$ with the closed trail starting at $(p_1 p_2 \dots p_{j - 1})(i)$
in $p^j$, referring to them as the same trail.
Here we illustrate an example of a closed trail and its rotations.
\begin{center}
\begin{tikzpicture}
	\foreach \x in {1,...,7} {
		\draw (\x, 1) node {};
		\draw (\x, 2) node {};
		\draw (\x, 3) node {};
		\draw (\x, 4) node {};
		\draw (\x, 5) node {};
		\draw (\x,  6) node {};
		\draw (\x,  7) node {};
		\draw (\x,  8) node {};
		\draw (\x,  9) node {};
		\draw (\x, 10) node {};
		\draw (\x, 11) node {};
		\draw (\x, 12) node {};
		\draw (\x, 13) node {};
		\draw (\x, 14) node {};
		\draw (\x, 15) node {};
		\draw (\x, 16) node {};
		\draw (\x, 17) node {};
		\draw (\x, 18) node {};
		\draw (\x, 19) node {};
		\draw (\x, 20) node {};
	}
	\draw [->, gray] (1, 20) to (7, 19);
	\draw [->, gray] (2, 20) to (1, 19);
	\draw [->, gray] (3, 20) to (2, 19);
	\draw [->, gray] (4, 20) to (3, 19);
	\draw [->, gray] (5, 20) to (4, 19);
	\draw [->, blue] (6, 20) to (5, 19);
	\draw [->, gray] (7, 20) to (6, 19);
	\draw [->, gray] (1, 19) to (2, 18);
	\draw [->, gray] (2, 19) to (1, 18);
	\draw [->, gray] (3, 19) to (7, 18);
	\draw [->, gray] (4, 19) to (3, 18);
	\draw [->, blue] (5, 19) to (4, 18);
	\draw [->, gray] (6, 19) to (5, 18);
	\draw [->, gray] (7, 19) to (6, 18);
	\draw [->, gray] (1, 18) to (3, 17);
	\draw [->, gray] (2, 18) to (1, 17);
	\draw [->, gray] (3, 18) to (2, 17);
	\draw [->, blue] (4, 18) to (7, 17);
	\draw [->, gray] (5, 18) to (4, 17);
	\draw [->, gray] (6, 18) to (5, 17);
	\draw [->, gray] (7, 18) to (6, 17);
	\draw [->, gray] (1, 17) to (1, 16);
	\draw [->, gray] (2, 17) to (7, 16);
	\draw [->, gray] (3, 17) to (2, 16);
	\draw [->, gray] (4, 17) to (3, 16);
	\draw [->, gray] (5, 17) to (4, 16);
	\draw [->, gray] (6, 17) to (5, 16);
	\draw [->, blue] (7, 17) to (6, 16);
	\node[fill=none, draw=none] at (0, 19.5) {$\pi_3$};
	\node[fill=none, draw=none] at (0, 18.5) {$\pi_1$};
	\node[fill=none, draw=none] at (0, 17.5) {$\pi_0$};
	\node[fill=none, draw=none] at (0, 16.5) {$\pi_2$};
	\node[fill=none, draw=none] at (-1, 18) {$p = p^1$};
	\draw [-, dashed, gray] (6, 20) to (6, 16);
	\draw [->, gray] (1, 15) to (2, 14);
	\draw [->, gray] (2, 15) to (1, 14);
	\draw [->, gray] (3, 15) to (7, 14);
	\draw [->, gray] (4, 15) to (3, 14);
	\draw [->, blue] (5, 15) to (4, 14);
	\draw [->, gray] (6, 15) to (5, 14);
	\draw [->, gray] (7, 15) to (6, 14);
	\draw [->, gray] (1, 14) to (3, 13);
	\draw [->, gray] (2, 14) to (1, 13);
	\draw [->, gray] (3, 14) to (2, 13);
	\draw [->, blue] (4, 14) to (7, 13);
	\draw [->, gray] (5, 14) to (4, 13);
	\draw [->, gray] (6, 14) to (5, 13);
	\draw [->, gray] (7, 14) to (6, 13);
	\draw [->, gray] (1, 13) to (1, 12);
	\draw [->, gray] (2, 13) to (7, 12);
	\draw [->, gray] (3, 13) to (2, 12);
	\draw [->, gray] (4, 13) to (3, 12);
	\draw [->, gray] (5, 13) to (4, 12);
	\draw [->, gray] (6, 13) to (5, 12);
	\draw [->, blue] (7, 13) to (6, 12);
	\draw [->, gray] (1, 12) to (7, 11);
	\draw [->, gray] (2, 12) to (1, 11);
	\draw [->, gray] (3, 12) to (2, 11);
	\draw [->, gray] (4, 12) to (3, 11);
	\draw [->, gray] (5, 12) to (4, 11);
	\draw [->, blue] (6, 12) to (5, 11);
	\draw [->, gray] (7, 12) to (6, 11);
	\node[fill=none, draw=none] at (0, 14.5) {$\pi_1$};
	\node[fill=none, draw=none] at (0, 13.5) {$\pi_0$};
	\node[fill=none, draw=none] at (0, 12.5) {$\pi_2$};
	\node[fill=none, draw=none] at (0, 11.5) {$\pi_3$};
	\node[fill=none, draw=none] at (-1, 13) {$p^2$};
	\draw [-, dashed, gray] (5, 15) to (5, 11);
	\draw [->, gray] (1, 10) to (3, 9);
	\draw [->, gray] (2, 10) to (1, 9);
	\draw [->, gray] (3, 10) to (2, 9);
	\draw [->, blue] (4, 10) to (7, 9);
	\draw [->, gray] (5, 10) to (4, 9);
	\draw [->, gray] (6, 10) to (5, 9);
	\draw [->, gray] (7, 10) to (6, 9);
	\draw [->, gray] (1, 9) to (1, 8);
	\draw [->, gray] (2, 9) to (7, 8);
	\draw [->, gray] (3, 9) to (2, 8);
	\draw [->, gray] (4, 9) to (3, 8);
	\draw [->, gray] (5, 9) to (4, 8);
	\draw [->, gray] (6, 9) to (5, 8);
	\draw [->, blue] (7, 9) to (6, 8);
	\draw [->, gray] (1, 8) to (7, 7);
	\draw [->, gray] (2, 8) to (1, 7);
	\draw [->, gray] (3, 8) to (2, 7);
	\draw [->, gray] (4, 8) to (3, 7);
	\draw [->, gray] (5, 8) to (4, 7);
	\draw [->, blue] (6, 8) to (5, 7);
	\draw [->, gray] (7, 8) to (6, 7);
	\draw [->, gray] (1, 7) to (2, 6);
	\draw [->, gray] (2, 7) to (1, 6);
	\draw [->, gray] (3, 7) to (7, 6);
	\draw [->, gray] (4, 7) to (3, 6);
	\draw [->, blue] (5, 7) to (4, 6);
	\draw [->, gray] (6, 7) to (5, 6);
	\draw [->, gray] (7, 7) to (6, 6);
	\node[fill=none, draw=none] at (0, 9.5) {$\pi_0$};
	\node[fill=none, draw=none] at (0, 8.5) {$\pi_2$};
	\node[fill=none, draw=none] at (0, 7.5) {$\pi_3$};
	\node[fill=none, draw=none] at (0, 6.5) {$\pi_1$};
	\node[fill=none, draw=none] at (-1, 8) {$p^3$};
	\draw [-, dashed, gray] (4, 10) to (4, 6);
	\draw [->, gray] (1, 5) to (1, 4);
	\draw [->, gray] (2, 5) to (7, 4);
	\draw [->, gray] (3, 5) to (2, 4);
	\draw [->, gray] (4, 5) to (3, 4);
	\draw [->, gray] (5, 5) to (4, 4);
	\draw [->, gray] (6, 5) to (5, 4);
	\draw [->, blue] (7, 5) to (6, 4);
	\draw [->, gray] (1, 4) to (7, 3);
	\draw [->, gray] (2, 4) to (1, 3);
	\draw [->, gray] (3, 4) to (2, 3);
	\draw [->, gray] (4, 4) to (3, 3);
	\draw [->, gray] (5, 4) to (4, 3);
	\draw [->, blue] (6, 4) to (5, 3);
	\draw [->, gray] (7, 4) to (6, 3);
	\draw [->, gray] (1, 3) to (2, 2);
	\draw [->, gray] (2, 3) to (1, 2);
	\draw [->, gray] (3, 3) to (7, 2);
	\draw [->, gray] (4, 3) to (3, 2);
	\draw [->, blue] (5, 3) to (4, 2);
	\draw [->, gray] (6, 3) to (5, 2);
	\draw [->, gray] (7, 3) to (6, 2);
	\draw [->, gray] (1, 2) to (3, 1);
	\draw [->, gray] (2, 2) to (1, 1);
	\draw [->, gray] (3, 2) to (2, 1);
	\draw [->, blue] (4, 2) to (7, 1);
	\draw [->, gray] (5, 2) to (4, 1);
	\draw [->, gray] (6, 2) to (5, 1);
	\draw [->, gray] (7, 2) to (6, 1);
	\node[fill=none, draw=none] at (0, 4.5) {$\pi_2$};
	\node[fill=none, draw=none] at (0, 3.5) {$\pi_3$};
	\node[fill=none, draw=none] at (0, 2.5) {$\pi_1$};
	\node[fill=none, draw=none] at (0, 1.5) {$\pi_0$};
	\node[fill=none, draw=none] at (-1, 3) {$p^4$};
	\draw [-, dashed, gray] (7, 5) to (7, 1);
\end{tikzpicture}
\end{center}

\begin{corollary}
A path $p$ is closed if and only if each of its rotations $p^i$ is closed.
\end{corollary}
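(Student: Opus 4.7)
The plan is to derive this corollary as an immediate consequence of Lemma~\ref{rotate}. The critical observation is that any path of length $n$ admits exactly $n$ trails, one starting at each of the positions $1, 2, \ldots, n$, so by definition the path is closed if and only if its set of closed trails has maximal cardinality $n$.

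For the forward direction I would assume that $p$ is closed, so all $n$ of its trails are closed. Lemma~\ref{rotate} supplies a bijection between the closed trails of $p$ and the closed trails of any given rotation $p^i$, so $p^i$ must also have exactly $n$ closed trails. Since $p^i$ has only $n$ trails in total, every trail of $p^i$ is closed and hence $p^i$ is closed. The reverse direction is essentially trivial, since $p$ is itself a rotation of $p$ (namely $p^0$, or equivalently $p^n$), so the hypothesis that every $p^i$ is closed already includes $p$ itself.

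There is no substantive obstacle in this argument: all the real work has been carried out in Lemma~\ref{rotate}, and the corollary amounts to the elementary observation that two finite sets of equal cardinality embedded in an ambient set of size $n$ are simultaneously equal to the whole ambient set or not. The only point requiring mild care is to interpret ``closed trails of $p$'' and ``closed trails of $p^i$'' under the identification established in the discussion following Lemma~\ref{rotate}, so that the bijection is applied consistently.
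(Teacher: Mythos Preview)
Your proposal is correct and matches the paper's intent: the corollary is stated without proof, as an immediate consequence of Lemma~\ref{rotate}, and your counting argument is exactly the natural way to extract it. One small terminological slip worth noting: the number of trails is $n$ because there are $n$ positions in the word, not because the path has length $n$; the argument goes through unchanged for paths of any length, which is what you actually need since the paper later applies this to paths of length $n+1$.
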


\begin{lemma}
\label{at_least_two}
Any closed trail in a path of length $n + 1$ must contain at least two
forward arrows.
\end{lemma}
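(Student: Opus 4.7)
The plan is to track the net change in position along the closed trail. Each of the $n + 1$ arrows has an integer position-change: a backward arrow contributes exactly $-1$ (it shifts the letter one place to the left), while each forward arrow contributes a strictly positive integer. Since the trail returns to its starting position, these $n + 1$ signed contributions must sum to $0$.

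I would then establish a uniform upper bound showing that no single forward arrow contributes more than $n - 1$. For $\pi_k$ the unique forward arrow sends position $1 \mapsto n$, a change of $n - 1$. For $\pi_i$ with $i < k$ the left arrow sends $1 \mapsto k - i + 1$ with change $k - i \leq k \leq n - 1$, and the right arrow sends $k - i + 1 \mapsto n$ with change $n - k + i - 1 \leq n - 2$; in every case the change is at most $n - 1$. This case check, done directly from the explicit definition of each $\pi_i$, is the only point where the argument touches the internal structure of the rules.

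With these two ingredients, let $f$ be the number of forward arrows on the closed trail. The total position change is at most $f(n - 1) - (n + 1 - f)$. If $f \leq 1$, this upper bound is at most $(n - 1) - n = -1 < 0$, contradicting the fact that the total change is $0$. Hence $f \geq 2$, as required. The only mild subtlety is handling the very small $n$ degenerate cases, but since the G\'{o}mez construction assumes $n = 2k$ or $n = 2k + 1$ with $k$ large enough for all the rules $\pi_i$ to be well-defined, the bound $n \geq 3$ is harmless and the argument goes through uniformly.
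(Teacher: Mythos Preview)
Your proposal is correct and follows essentially the same argument as the paper: both observe that the signed position changes along a closed trail sum to zero, that each backward arrow contributes $-1$, and that the largest forward jump is $n-1$ (realised by $\pi_k$), so a single forward arrow cannot offset $n$ backward ones. Your case check contains a harmless off-by-one slip---the left arrow of $\pi_i$ sends $1 \mapsto k-i$, not $k-i+1$---but the bound $\le n-1$ you actually use is unaffected.
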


\begin{proof}
For any closed trail we see that the
distance traversed by backwards arrows is equal to the distance traversed by
forwards arrows. If there are no forwards arrows, this obviously cannot happen.
If there is only one forwards arrow, then there must be $n$ backwards arrows,
hence the forwards arrow must correspond to travelling forwards $n$ places.
However, the furthest that can be travelled forwards occurs in rule $\pi_k$,
which travels forwards by $n - 1$ spaces. Hence, this cannot occur, and so any
closed trail must contain at least two forward
arrows.
\end{proof}

\begin{lemma}
\label{left_arrows_suck}
Any closed trail of length $n + 1$ whose only forward arrows are left
arrows contains at least three left arrows.
\end{lemma}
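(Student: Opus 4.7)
I would proceed by contradiction. Suppose a closed trail of length $n + 1$ has every forward arrow a left arrow but contains at most two of them. Lemma~\ref{at_least_two} guarantees at least two forward arrows, so exactly two left arrows appear, at steps $s_1 < s_2$, taken from rules $\pi_{i_1}$ and $\pi_{i_2}$. The left arrow of $\pi_i$ has source $k - i + 1$, target $n$, and forward jump $n - k + i - 1$, while every other arrow in the trail drops the current position by one.

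Write $a, b, c$ for the numbers of backward arrows before $s_1$, between $s_1$ and $s_2$, and after $s_2$, so $a + b + c = n - 1$. Tracing the trail's positions and imposing closure $p_0 = p_{n+1}$ pins $p_{s_1 - 1} = b + 1$ and $p_{s_2 - 1} = n - b$, and matching these against the left-arrow sources gives $i_1 = k - b$ and $i_2 = k - n + b + 1$. Combined with $0 \le i_1, i_2 < k$ and the requirement that no trail position ever equals $1$ (otherwise the trail would be forced to take a right arrow, contradicting the hypothesis), these constraints pin $b$ to a very small candidate set: $b = k$ when $n = 2k + 1$, and $b \in \{k - 1, k\}$ when $n = 2k$.

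\textbf{Main obstacle.} The genuinely hard step is producing the final contradiction from the surviving configurations, since the closure arithmetic alone is compatible with a two-left-arrow trail. The contradiction must therefore come from a structural feature of the surrounding path. I would apply Lemma~\ref{rotate} to move $s_1$ to step $1$ and then analyse the rules at the intermediate backward steps; the shape of the candidate configurations strongly resembles the forbidden run structure ruled out in the definition of $\sigma$-sequences via property~\ref{sigma-is-three-seqs}, so I would attempt to set up a correspondence between the trail's forward-jump skeleton and an associated $\sigma$-sequence, exploit the ``at most three zeros'' restriction to force an additional $01$-group, and conclude that a third left arrow is unavoidable. Carrying out this translation cleanly, and verifying that the relevant rules really do conflict for both parities of $n$, is the step I expect to be most technical.
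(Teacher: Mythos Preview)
Your setup contains a critical mix-up that derails the entire argument: the arrow you describe as the left arrow of $\pi_i$, with source $k-i+1$, target $n$, and forward jump $n-k+i-1$, is in fact the \emph{right} arrow. The left arrow of $\pi_i$ has source $1$, target $k-i$, and forward jump $k-i-1$; in particular its maximum possible jump is $k-1$, attained at $\pi_0$.

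This error is exactly why you believe ``the closure arithmetic alone is compatible with a two-left-arrow trail'' and are driven to invoke rotations, $\sigma$-sequences, and the zero-count restriction. With the correct left-arrow data the arithmetic is already decisive: if a closed trail of length $n+1$ has only left arrows as forward arrows and there are exactly two of them, then the $n-1$ backward arrows contribute total backward displacement $n-1$, while the two left arrows contribute total forward displacement at most $2(k-1)=2k-2\le n-2$. Since a closed trail must balance forward and backward displacement, this is a contradiction. That three-line count is the paper's entire proof; no structural analysis of the surrounding path, no appeal to Lemma~\ref{rotate}, and no $\sigma$-sequence machinery is needed.
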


\begin{proof}
The furthest that can be travelled forwards by a left arrow occurs in the rule
$\pi_0$ in which we travel forwards $k - 1$ spaces. If we have a closed trail
which contains two left arrows, then it travels a
distance of $n - 1$ with backwards arrows. Hence, we must have
$n - 1 \leq 2(k - 1) = 2k - 2 \leq n - 2$.
\end{proof}

\begin{lemma}
\label{right_arrows_rule}
Any closed trail of length $n + 1$ whose only forward arrows are right arrows
contains exactly two right arrows.
\end{lemma}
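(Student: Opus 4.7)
The plan is to combine the lower bound from Lemma~\ref{at_least_two} with a counting argument based on how far each type of arrow can travel. From Lemma~\ref{at_least_two}, the trail contains at least two forward arrows, and by hypothesis these are right arrows, so it remains only to prove the upper bound $r \leq 2$, where $r$ is the number of right arrows in the trail.

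The first step is to compute, for each rule $\pi_i$, the forward distance of its right arrow. From the definition of $\pi_i$, the right arrow of $\pi_i$ (for $0 \leq i < k$) runs from position $k - i + 1$ to position $n$, a forward distance of $n - k + i - 1$, which is minimised at $i = 0$ and equals $k$ when $n = 2k + 1$ and $k - 1$ when $n = 2k$. The right arrow of $\pi_k$ runs from position $1$ to position $n$, which exceeds this minimum. So every right arrow in a path travels forward by at least $k$ (respectively $k - 1$) positions.

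Next, I use the fact that the trail is closed: the total distance traversed by forward arrows equals the total distance traversed by backward arrows. Since every backward arrow has distance exactly $1$, and since the trail consists of $n + 1$ arrows of which $r$ are right arrows and the remaining $n + 1 - r$ are backward arrows (no left arrows by assumption), closure gives
\[
	\sum_{\text{right arrows}} (\text{forward distance}) \;=\; n + 1 - r.
\]
In the case $n = 2k + 1$, applying the lower bound $k$ to each summand yields $rk \leq 2k + 2 - r$, hence $r(k + 1) \leq 2(k + 1)$, so $r \leq 2$. In the case $n = 2k$, applying the lower bound $k - 1$ yields $r(k - 1) \leq 2k + 1 - r$, hence $rk \leq 2k + 1$, and for $k \geq 2$ this forces $r \leq 2$. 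Combined with $r \geq 2$ from Lemma~\ref{at_least_two}, we obtain $r = 2$.

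The main thing to be careful about is to state the distance bound uniformly over all rules (including the special rule $\pi_k$, which in fact has an even longer right arrow and thus cannot lower the minimum), and to note that the two parity cases of $n$ must be handled separately because the minimum right arrow distance differs by one; both cases however yield the same conclusion.
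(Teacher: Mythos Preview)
Your proof is correct and follows essentially the same approach as the paper: both compare the total forward distance contributed by the right arrows against the total backward distance $n+1-r$, and invoke Lemma~\ref{at_least_two} for the lower bound $r\geq 2$. You are in fact slightly more careful than the paper, splitting into the parity cases $n=2k+1$ and $n=2k$ and computing the minimum right-arrow length exactly in each; the paper instead uses a single looser bound of $k-1$ and argues by contradiction from $r\geq 3$.
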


\begin{proof}
Consider a closed trail whose only forward arrows are right arrows. As the trail
is closed, the sum of the forward arrows equals the sum of the backward arrows.
Suppose there are three or more right arrows in the
trail, then the sum of the forward arrows is at least $3(k - 1)$ and the sum of
the backwards arrows is at most $n - 3 < 3(k - 1)$, hence there must be fewer
than three right arrows, and so by Lemma~\ref{at_least_two} there are two.
\end{proof}

\begin{lemma}
\label{at_most_three}
In a closed path of length $n + 1$ there are at most
three trails containing two right arrows.
\end{lemma}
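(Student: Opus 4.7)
\textit{Proof plan.} The plan proceeds in three stages, culminating in a structural argument. First I would show that every closed trail of length $n + 1$ contains at most two right arrows. Writing $r, l, b$ for the right-arrow, left-arrow, and backward-arrow counts of such a trail, we have $r + l + b = n + 1$ and the balance identity $F_R + F_L = b$, where $F_R, F_L$ denote the total forward distances contributed by right and left arrows. Since every right arrow has forward distance at least $n - k - 1$ and left arrows contribute at least $0$, this yields $r(n - k - 1) \le n + 1 - r$, which forces $r \le 2$ for $n \ge 3$.

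Second, by Lemma~\ref{num_arrows} the path contains exactly $n + 1$ right arrows distributed among the $n$ trails. Letting $t_i$ denote the number of trails with exactly $i$ right arrows, this gives
\[
	t_0 + t_1 + t_2 = n, \qquad t_1 + 2 t_2 = n + 1,
\]
so $t_2 = t_0 + 1$. The lemma is therefore equivalent to showing $t_0 \le 2$.

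Third, I would analyze the structure of trails with zero right arrows. Such a trail uses only left and backward arrows, and by Lemma~\ref{left_arrows_suck} has at least three left arrows. Each left arrow sends the trail from position $1$ to some position $k - c \le k$, after which backward arrows descend it back to $1$. Escape from $\{1, 2, \dots, k\}$ would require a right arrow, so a no-right-arrow trail is confined to the left half throughout the closed cycle. Since the rules act as permutations on positions, distinct no-right-arrow trails occupy pairwise distinct positions in $\{1, \dots, k\}$ at every step.

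The main obstacle is sharpening the resulting bound $t_0 \le k$ to $t_0 \le 2$. Assuming for contradiction that $t_0 \ge 3$, the three occupied positions form a set $P(t) \subseteq \{1, \dots, k\}$ of size $3$ at every step, and the rule $\pi_c$ applied must have its right-arrow source $s_c \in \{1, \dots, k+1\}$ outside $P(t)$. Analyzing the evolution of $P(t)$ under the allowed rules — in particular using that $\pi_0$ restricts to a $k$-cycle on the left half — one shows that over $n + 1$ steps the composition cannot return each of the three trails to its starting position, contradicting closedness. A potentially cleaner alternative is to use the $\sigma$-sequence framework of Section~2: property~\ref{sigma-is-three-seqs} (``at most three zeros'') directly parallels the present bound, suggesting an identification of no-right-arrow trails (or equivalently of pairs of right arrows of $t_2$-trails) with zeros in a suitable $\sigma$-sequence of length $n$ associated to the closed path.
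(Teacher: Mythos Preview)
Your first two stages are correct and give a tidier reformulation than the paper's: the identities $t_0 + t_1 + t_2 = n$ and $t_1 + 2t_2 = n+1$ yield $t_2 = t_0 + 1$, so the lemma is exactly the bound $t_0 \le 2$. The gap is in stage three. You correctly single out the no-right-arrow trails as the objects to bound and observe that they live on the left side, but the argument you then sketch --- tracking the evolution of three positions $P(t) \subseteq \{1,\dots,k\}$ and arguing they cannot all return in $n+1$ steps --- is not carried out; it is a programme, not a proof, and it is not clear it can be completed without substantial additional structure. The $\sigma$-sequence alternative is circular: the correspondence between closed paths and $\sigma$-sequences in Section~6 is built on the structural lemmas of Section~4, including the present one.

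What actually closes the gap is a left-arrow count, and it slots directly into your own framework. By Lemma~\ref{left_arrows_suck} each of the $t_0$ no-right-arrow trails carries at least three left arrows, and by Lemma~\ref{at_least_two} each of the $t_1$ one-right-arrow trails carries at least one left arrow. Since by Lemma~\ref{num_arrows} the path has at most $n+1$ left arrows in total,
\[
3t_0 + t_1 \;\le\; n + 1.
\]
Substituting $t_1 = n - 1 - 2t_0$ from your stage-two identities gives $t_0 \le 2$, hence $t_2 \le 3$. This is precisely the paper's argument (phrased there as a contradiction from $t_2 \ge 4$ and a greedy pairing of remaining arrows), just run through your cleaner bookkeeping; the missing ingredient was to count left arrows rather than positions.
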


\begin{proof}
Suppose we have a closed path of length $n + 1$ which contains
at least four trails containing two right arrows.
By Lemma~\ref{num_arrows} we
now have $(n + 1) - 8 = n - 7$ unaccounted for right arrows,
and at most $n + 1$
unaccounted for left arrows. Lemma~\ref{at_least_two} tells us that we need to
use at least two forward arrows per remaining trail in the path.
If each left arrow is in a trail with a
right arrow, then such a trail requires only two arrows, otherwise it requires
three or more. Hence, in order to minimise the number of required arrows, we may
assume as many left arrows as possible are paired with right arrows. In this
manner, we assume all $n - 7$ remaining right arrows are paired with left
arrows. This leaves at most $(n + 1) - (n - 7) = 8$ unaccounted for left arrows.
We have now accounted for $(n - 7) + 4 = n - 3$ of $n$ trails, leaving three
unaccounted for.
Lemma~\ref{left_arrows_suck} tells us that each of the three remaining trails
requires at least three left arrows, but we only have eight unaccounted for left
arrows.
\end{proof}

\begin{lemma}
\label{right_arrow_first}
In a path of length $n + 1$, if the trail starting with the right arrow of $p_1$
contains no further right arrows, it contains the left arrow of $p_{n + 1}$.
\end{lemma}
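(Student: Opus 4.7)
The plan is to follow the trail step by step from $p_1$ to $p_{n+1}$ and show that the no-further-right-arrow hypothesis forces $n - 1$ consecutive backward arrows followed by a left arrow. First I would fix notation: write $p_j = \pi_{i_j}$ and let $r_j$ denote the position at which the trail sits immediately after passing through $p_j$, so $r_1 = n$ by the definition of the right arrow of $\pi_{i_1}$. I would then set out a dictionary between source position and arrow type within a single rule $\pi_i$: the arrow leaving position $q$ is the right arrow when $q = k - i + 1$ (or $q = 1$ if $i = k$), the left arrow when $q = 1$ and $i < k$, and otherwise the unique backward arrow that sends $q$ to $q - 1$.

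Next I would prove by induction on $j$ for $1 \leq j \leq n$ that if none of $p_2, \ldots, p_j$ contributes a right arrow to the trail then $r_j = n - j + 1$. The base case $r_1 = n$ is immediate. For the inductive step, the entry position to $p_j$ is $r_{j - 1} = n - j + 2 \geq 2$ because $j \leq n$, so this position is neither a left-arrow source (those require $q = 1$) nor, by hypothesis, a right-arrow source, and the dictionary leaves only the backward arrow, sending $r_{j - 1}$ to $r_{j - 1} - 1 = n - j + 1$. Taking $j = n$ yields $r_n = 1$, so the trail enters $p_{n + 1}$ at position $1$.

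Finally, at position $1$ the only arrows of $p_{n + 1} = \pi_{i_{n + 1}}$ are the left arrow (if $i_{n + 1} < k$) and the right arrow (if $i_{n + 1} = k$), so the hypothesis forces $i_{n + 1} < k$ and the trail therefore contains the left arrow of $p_{n + 1}$. I do not expect a real obstacle here: once the position-to-arrow dictionary is written down the argument is essentially mechanical. The only mild subtlety is that when $i = k - 1$ the left arrow is a self-loop at position $1$, but this plays no role because position $1$ is first reached precisely at the step $p_{n + 1}$.
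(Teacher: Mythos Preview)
Your proposal is correct and follows essentially the same route as the paper's proof: start at position $n$ after the right arrow of $p_1$, argue that each subsequent step up to $p_n$ must be a backward arrow (so the position decrements to $1$), and then conclude that the arrow taken at $p_{n+1}$ from position $1$ is the left arrow. Your version is simply more explicit than the paper's---you spell out the position-to-arrow dictionary and the case split at $p_{n+1}$ between $i_{n+1} < k$ and $i_{n+1} = k$, whereas the paper leaves both implicit.
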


\begin{proof}
After $p_1$, the trail is at position $n$. As the trail contains no further
right arrows, each $p_{1 + i}$ maps the trail from
$n - i$ to $n - i - 1$, provided that $n - i > 1$.
Hence, the trail reaches position 1 after $p_n$, and so the trail
contains the left arrow of $p_{n + 1}$.
\end{proof}

If in a path $p$ of length $n$ there is some $i$ and $j$ such that $p_i = \pi_j$
and $p_{i + 1} = \pi_{j + 1}$, then we call the left arrow of $p_i$ and the
right arrow of $p_{i + 1}$ \textit{paired} and refer to them together as a
\textit{pair}. As a special case, we allow $i = n$ and use $p_1$ instead of
$p_{i + 1}$.

\begin{lemma}
\label{right_left_pair}
If a closed trail in a path of length $n + 1$ contains both right and left
arrows, then it contains one pair and no other forward arrows.
\end{lemma}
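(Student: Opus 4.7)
The plan is to view the closed trail cyclically and partition its $n+1$ steps into ``gaps'' between consecutive forward arrows. Let $f$ denote the number of forward arrows used by the trail, so $f \geq 2$ by Lemma~\ref{at_least_two}. Labelling their positions in the path as $t_1 < t_2 < \dots < t_f$ and setting $g_i = t_{i+1} - t_i$ for $i < f$ together with $g_f = (n+1) - t_f + t_1$, we have $\sum_i g_i = n+1$ and each $g_i \geq 1$.

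The key observation I would establish is that a ``right-to-left'' gap, meaning one in which a right arrow is followed (in the cyclic order along the trail) directly by a left arrow, has length exactly $n$. After a right arrow the trail sits at position $n$; each backward arrow decreases the position by exactly one; the trail cannot descend below $1$; and the next (left) arrow fires only when the trail is at position $1$. So between them there are precisely $n-1$ backward steps and then the left arrow itself, giving a gap of $n$. Since the trail is cyclic and contains both a right and a left arrow, at least one right-to-left transition must occur.

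It follows that the remaining $f-1$ gaps together sum to $1$; since each is at least $1$, this forces $f - 1 \leq 1$ and hence $f = 2$. The second gap then has length exactly $1$, meaning the right arrow immediately follows the left arrow in the path with no intervening backward step. Writing the left arrow as belonging to $\pi_{j_L}$ and the right arrow to $\pi_{j_R}$, the trail's position just after the left arrow, namely $k - j_L$ (or $1$ if $j_L = k-1$), must coincide with the source of the right arrow, namely $k - j_R + 1$ (or $1$ if $j_R = k$); in both cases this forces $j_R = j_L + 1$. Hence the path contains $\pi_{j_L}$ followed by $\pi_{j_L + 1}$ in consecutive positions, which is exactly the definition of a pair, and $f = 2$ tells us there are no other forward arrows in the trail.

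The main subtlety is justifying that the right-to-left gap is \emph{exactly} $n$ and not more: this needs the trail to avoid lingering at position $1$ without firing a forward arrow, which is automatic because every rule applied at position $1$ uses either a left arrow of some $\pi_j$ or the right arrow of $\pi_k$. The degenerate case $j_L = k-1$, in which the ``left arrow'' of $\pi_{k-1}$ is a self-loop at position $1$, requires no separate treatment: the identity $j_R = j_L + 1$ still holds and pairs the self-loop with the right arrow of $\pi_k$, and the wrap-around case where the pair straddles $p_{n+1}$ and $p_1$ is handled just as in the original definition.
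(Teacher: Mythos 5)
Your proof is correct and follows essentially the same route as the paper: both arguments rest on the observation that passing from a right arrow (which lands the trail at position $n$) to a left arrow (which can only fire from position $1$) costs exactly $n$ of the $n+1$ steps, leaving room for only one further forward arrow immediately adjacent to the left arrow, whose source position forces the index relation $j_R = j_L + 1$ defining a pair. Your cyclic gap-sum bookkeeping makes the exclusion of a third forward arrow slightly more explicit than the paper's rotation-based phrasing, but the underlying idea is identical.
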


\begin{proof}
Suppose $p$ is a path with such a trail. Let $q$ be a rotation of $p$ which puts
a right arrow of the trail in position $q_1$. As the trail contains a left
arrow, at some point we have some $i$ such that $(q_2 q_3 \dots q_i)(n) = 1$.
As the most we can
travel backwards in each $q_j$ is one space, the soonest this can happen is by
$i = n - 1$, if and only if each $q_j$ introduces a backwards arrow into the
trail. If this does not happen, then no $q_j$ including $j \in \{ 1, n + 1 \}$
can contain a left arrow in the trail. Hence, this is the only possibity. If the
position the trail starts at is $k - \alpha$ then we know $q_1(k - \alpha) = n$,
so $q_1 = \pi_\alpha$, and $q_{n + 1}(1) = k - \alpha$, hence
$q_{n + 1} = \pi_{\alpha - 1}$. Hence the trail contains one pair and all other
arrows are backwards arrows.
\end{proof}

\begin{lemma}
\label{left_right_ident}
If all right arrows in a path $p$ of length $n + 1$ are either in closed trails
or paired, then all pairs in $p$ are in distinct closed trails.
\end{lemma}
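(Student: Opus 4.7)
The plan is to prove (A) every pair lies in some closed trail and (B) distinct pairs lie in distinct closed trails. Part (B) is immediate from Lemma~\ref{right_left_pair}: a closed trail containing both right and left arrows has exactly one pair, so two pairs cannot share a closed trail.

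For part (A), fix a pair $(a_1, a_2)$ at steps $(i, i+1)$ with $p_i = \pi_j$ and $p_{i+1} = \pi_{j+1}$, and let $T$ be the unique trail threading through both arrows (they chain at position $k - j$). I would first handle the clean case: if the only forward arrows of $T$ are $a_1, a_2$, then $T$ starts at position $c = i$, reaches position $1$ at step $i - 1$ via $i - 1$ backward arrows, jumps to position $n$ via the pair, and returns to position $i$ via $n - i$ further backward arrows, so $T$ is closed.

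Suppose for contradiction $T$ is not closed; then $T$ contains some additional forward arrow $a_3$. If $a_3$ is a right arrow, the hypothesis forces it to either lie in a closed trail (forcing $T$ itself to be closed, contradiction) or to be paired, contributing a second pair to $T$. A step-counting argument rules out two pairs in $T$: travelling from the end of one pair at position $n$ back to the start of the next pair at position $1$ costs at least $n - 1$ backward arrows, so two complete pair cycles already exceed the $n+1$ arrows available. Hence $a_3$ must be an unpaired left arrow, say of $\pi_{m'}$ at some step $q$. The corresponding right arrow of $\pi_{m'}$ at step $q$ lives in a different trail, and by hypothesis it is either in a closed trail or paired; in the latter case $p_{q-1} = \pi_{m'-1}$, spawning another pair. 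Iterating this forced chain $\pi_{m'}, \pi_{m'-1}, \pi_{m'-2}, \ldots$ must terminate either when the index reaches $-1$ or when we walk off the start of the path, leaving an unpaired right arrow that must by hypothesis occupy a closed trail; a direct check shows the positional data forced on this closed trail by the chain is inconsistent with the structure of $p$, giving the desired contradiction.

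The main obstacle will be the unpaired-left case: tracking how the forced chain of paired right arrows propagates and extracting the clash at its boundary requires some care, whereas the extra-right-arrow case is dispatched quickly by step counting.
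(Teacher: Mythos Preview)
Part (B) and the clean case of (A) are fine (modulo the garbled indexing ``$T$ starts at position $c=i$'', which conflates positions with step indices). The genuine gap is in your case split on $a_3$: the ``unpaired left arrow'' branch you flag as the main obstacle cannot occur at all, and the chain-of-forced-pairs sketch you offer for it is not a proof of anything---it wanders off into other trails and ends with an unspecified ``direct check''.

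The missing observation is positional. Trace $T$ forward from the right arrow $a_2$: after step $i+1$ the trail is at position $n$. A left arrow can only be entered from position $1$, and from position $n$ it takes $n-1$ backward steps to reach position $1$; but the path has length $n+1$, so those $n-1$ steps exhaust every remaining step and deliver you back to step $i$, which is the original left arrow $a_1$. Hence any additional forward arrow encountered en route sits at a position $\geq 2$ and must be a \emph{right} arrow. Moreover it is necessarily \emph{unpaired}: if $p_j=\pi_m$ were paired then $p_{j-1}=\pi_{m-1}$, and the only arrow of $\pi_{m-1}$ landing at the source $k-m+1$ of that right arrow is the left arrow of $\pi_{m-1}$, so a paired right arrow is never preceded in the trail by a backward arrow. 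By hypothesis an unpaired right arrow lies in a closed trail, and that trail is $T$; (A) follows at once. Your two-pair step count and the entire left-arrow branch are then superfluous. The paper's proof is exactly this argument, phrased a touch more symmetrically by rotating an arbitrary right arrow to $p_1$ and reading off the two cases.
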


\begin{proof}
Consider an arbitrary right arrow in $p$, and the rotation of $p$ which brings
this right arrow to $p_1$. If the trail from this right arrow enters a forwards
arrow before $p_{n + 1}$, the forwards arrow must be an unpaired right arrow,
and so the trail is closed. Otherwise, the trail enters a left arrow at
$p_{n + 1}$, which must be the pair of the right arrow of $p_1$, and the trail
is closed.
\end{proof}

We now note from what we have shown that if a closed trail contains a right arrow
then either it contains exactly two right arrows or it contains one pair. Hence,
we are in a position to easily deal with closed trails containing at least one
right arrow. In order to settle the case of closed trails comprised entirely of
left arrows, we now require a further definition to continue our discussion. We
shall say that within a permutation in a path, a trail is on the
\textit{left side} if it is in a cycle containing a left arrow, and the
\textit{right side} otherwise. We shall say that between two rules a trail
changes sides from right to left or from left to right between two permutations
in the obvious manner. Below is a diagram to clarify the terminology, with
arrows on the left drawn in red and arrows on the right drawn in blue.

\begin{center}
\begin{tikzpicture}
	\foreach \x in {1,...,7} {
		\draw (\x, 1) node {};
		\draw (\x, 2) node {};
		\draw (\x, 3) node {};
		\draw (\x, 4) node {};
		\draw (\x, 5) node {};
	}
	\draw [->, blue] (1, 5) to (7, 4);
	\draw [->, blue] (2, 5) to (1, 4);
	\draw [->, blue] (3, 5) to (2, 4);
	\draw [->, blue] (4, 5) to (3, 4);
	\draw [->, blue] (5, 5) to (4, 4);
	\draw [->, blue] (6, 5) to (5, 4);
	\draw [->, blue] (7, 5) to (6, 4);
	\draw [->,  red] (1, 4) to (2, 3);
	\draw [->,  red] (2, 4) to (1, 3);
	\draw [->, blue] (3, 4) to (7, 3);
	\draw [->, blue] (4, 4) to (3, 3);
	\draw [->, blue] (5, 4) to (4, 3);
	\draw [->, blue] (6, 4) to (5, 3);
	\draw [->, blue] (7, 4) to (6, 3);
	\draw [->,  red] (1, 3) to (3, 2);
	\draw [->,  red] (2, 3) to (1, 2);
	\draw [->,  red] (3, 3) to (2, 2);
	\draw [->, blue] (4, 3) to (7, 2);
	\draw [->, blue] (5, 3) to (4, 2);
	\draw [->, blue] (6, 3) to (5, 2);
	\draw [->, blue] (7, 3) to (6, 2);
	\draw [->,  red] (1, 2) to (1, 1);
	\draw [->, blue] (2, 2) to (7, 1);
	\draw [->, blue] (3, 2) to (2, 1);
	\draw [->, blue] (4, 2) to (3, 1);
	\draw [->, blue] (5, 2) to (4, 1);
	\draw [->, blue] (6, 2) to (5, 1);
	\draw [->, blue] (7, 2) to (6, 1);
	\node[fill=none, draw=none] at (0, 4.5) {$\pi_3$};
	\node[fill=none, draw=none] at (0, 3.5) {$\pi_1$};
	\node[fill=none, draw=none] at (0, 2.5) {$\pi_0$};
	\node[fill=none, draw=none] at (0, 1.5) {$\pi_2$};
	\node[fill=none, draw=none] at (-1, 3) {$p$};
\end{tikzpicture}
\end{center}

For the remainder of this section, we will consider paths with the property
that, letting $p$ be our path,
$p_i = \pi_j$ and $p_{i + 1} = \pi_k$ implies $j \geq k - 1$ (note that this
restriction applies to the last entry of $p$, considering $p_1$ instead of
$p_{i + 1}$).

\begin{lemma}
If $p$ is closed, any trail which changes sides contains a pair.
\end{lemma}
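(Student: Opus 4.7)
The plan is to assign to each rule $p_i$ of the closed path a side (left or right) according to which cycle of $p_i$ contains the trail at that step, and to observe that if the trail changes sides at all then, since the sequence of sides is cyclic, there must be at least one adjacent pair $(p_i, p_{i+1})$ at which the trail moves from left to right. The task then reduces to showing that any such left-to-right crossing is realised by a pair of arrows in the sense defined earlier.

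To carry this out, suppose at $p_i = \pi_j$ the trail sits on the left side and at $p_{i+1} = \pi_l$ on the right side. Since $\pi_j$ preserves its left cycle, the position reached by the trail after $p_i$ lies in $\{1, 2, \ldots, k-j\}$. For this same position to sit on the right side of $\pi_l$, it must lie in $\{k-l+1, \ldots, n\}$. These two ranges overlap only when $l \geq j+1$, and invoking the shift restriction $l \leq j+1$ in force just before the lemma forces $l = j+1$ and pins the intermediate position to the single value $k-j$.

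The only way to reach position $k-j$ from inside the left cycle of $\pi_j$ is via the left arrow of $\pi_j$ starting from position $1$; then $\pi_{j+1}$ sends $k-j = k-(j+1)+1$ to $n$ via its right arrow. By the definition of pairing given in the paragraph preceding the lemma, the left arrow of $p_i$ and the right arrow of $p_{i+1}$ together constitute a pair, which by construction lies on the trail.

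The main obstacle is bookkeeping: correctly identifying how the side of a trail during a rule $\pi_j$ is characterised by an index range, and then pinning down exactly which arrows in two consecutive permutations can realise a crossing between these ranges. Once the sides are coordinatised by index ranges, the forcing from the shift restriction makes the conclusion immediate, and the right-to-left crossings need not be analysed separately since cyclicity of the closed path guarantees at least one left-to-right crossing whenever any side change occurs at all.
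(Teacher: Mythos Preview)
Your proof is correct and follows essentially the same approach as the paper's: both argue that closedness forces a left-to-right crossing somewhere, that such a crossing between $\pi_j$ and $\pi_l$ requires $l > j$, and that the standing restriction $l \leq j+1$ then pins $l=j+1$ and identifies the crossing as the left arrow of $p_i$ followed by the right arrow of $p_{i+1}$, i.e.\ a pair. Your version simply makes the side coordinates $\{1,\dots,k-j\}$ and $\{k-l+1,\dots,n\}$ explicit, whereas the paper leaves this implicit.
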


\begin{proof}
For any closed trail, the number of times the trail changes from left to right
must equal the number of times the trail changes from right to left. Hence, if a
trail changes sides at all we know at some point the trail must change sides
from left to right. It is only possible for a trail to change sides from left to
right between a pair of consecutive rules of the form $p_i = \pi_j$ and
$p_{i + 1} = \pi_k$ where $k > j$. Given our assumption on our path $p$, we see
this happens only if $k = j + 1$, in which case there is only one path which
changes sides from left to right, corresponding to the left arrow of $p_i$
connecting to the right arrow of $p_{i + 1}$. Hence any trail which changes
sides must contain a pair.
\end{proof}

\begin{corollary}
\label{always_left}
If $p$ is closed, any trail which contains only left arrows
is always on the left side.
\end{corollary}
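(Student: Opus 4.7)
My plan is to derive this corollary as a near-immediate consequence of the preceding lemma, using the observation that every pair necessarily involves a right arrow. By the definition of a pair, it consists of a left arrow of some $p_i$ together with the right arrow of $p_{i+1}$, so any trail containing a pair must contain a right arrow. In particular, a trail whose only forward arrows are left arrows can contain no pair, and then the contrapositive of the preceding lemma tells me that such a trail does not change sides.

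It remains to rule out the possibility that the trail lies entirely on the right side. For this I would invoke that $p$ is closed, so the trail itself is closed, and apply Lemma~\ref{at_least_two} to see that it must contain at least two forward arrows. By hypothesis these are all left arrows, so at some position $i$ the trail takes a left arrow of $p_i$; consequently, at position $i$ the trail sits in the cycle of $p_i$ containing that left arrow and is therefore on the left side. Combined with the previous paragraph, this forces the trail to be on the left side at every position along $p$.

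I do not anticipate any genuine obstacle here: the corollary is structurally just a one-step application of the preceding lemma, together with the pair-includes-a-right-arrow observation. The only subtlety worth flagging is the reading of ``contains only left arrows'', which I interpret as asserting that every forward arrow in the trail is a left arrow while still allowing backward arrows; this is what makes the statement nonvacuous, since pure left-arrow-plus-backward-arrow closed trails can occur, as illustrated by Lemma~\ref{left_arrows_suck}.
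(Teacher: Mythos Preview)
Your argument is correct and matches the paper's intended derivation: the corollary is stated without proof there, as an immediate consequence of the preceding lemma via exactly the contrapositive you spell out, together with the observation that a left arrow places the trail on the left side at that step. One small remark: invoking Lemma~\ref{at_least_two} is both unnecessary and slightly too restrictive (that lemma is stated for paths of length $n+1$, whereas this corollary is later applied to closures of arbitrary length); the hypothesis that the trail contains left arrows already supplies a left arrow witnessing that the trail is on the left side at some step.
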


Now we shall define the \textit{closure} of a path $p$ to be $p$ concatenated
with itself the smallest number of times necessary to form a closed path.
As $p$ is a permutation, we know that the closure exists as each permutation
has finite order.

\begin{lemma}
If $p$ is a path in which every trail with a right arrow is closed, all trails
with only left arrows are always on the left side.
\end{lemma}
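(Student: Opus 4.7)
The plan is to reduce to Corollary~\ref{always_left} by passing to the closure of $p$. Write $q$ for the closure of $p$, so $q$ is closed by construction and still satisfies the shift restriction assumed throughout this portion of the section, since the restriction holds at every junction of two consecutive copies of $p$ (the hypothesis on $p$ is cyclic, identifying $p_{\ell + 1}$ with $p_1$, where $\ell$ denotes the length of $p$).

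Let $T$ be a trail in $p$ starting at some position $i$ which uses only left arrows, and let $\sigma = p_1 p_2 \cdots p_\ell$ denote the permutation induced by $p$ on positions. Consider the extended trail $\tilde{T}$ in $q$ starting at the same position $i$. Because $q$ is a concatenation of copies of $p$, the trail $\tilde{T}$ breaks into consecutive segments, the $t$-th of which is the trail in $p$ starting at $\sigma^t(i)$. I claim that every such segment consists entirely of left arrows. If not, some segment beginning at $\sigma^t(i)$ contains a right arrow, so by hypothesis this trail in $p$ is closed, giving $\sigma^{t+1}(i) = \sigma^t(i)$. By injectivity of $\sigma$ this forces $\sigma(i) = i$, whence $T$ itself is closed and $\tilde{T}$ is merely $T$ repeated, which consists solely of left arrows, contradicting the assumption that one of its segments contained a right arrow.

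Therefore $\tilde{T}$ is a trail in the closed path $q$ containing only left arrows. Applying Corollary~\ref{always_left} to $q$ yields that $\tilde{T}$ is always on the left side, and since $T$ is literally the first $\ell$ arrows of $\tilde{T}$, the trail $T$ is always on the left side as well.

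The main obstacle I expect is the step ruling out a right arrow entering $\tilde{T}$ in a later copy of $p$. The subtle point is that the hypothesis ``every trail with a right arrow is closed'' concerns only trails inside $p$; translating it into a statement about the extended trail in $q$ requires recognising that closedness of a trail is equivalent to its starting position being a fixed point of $\sigma$, and then using the bijectivity of $\sigma$ on the orbit of $i$ to propagate the left-only property across successive copies of $p$.
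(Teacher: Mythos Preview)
Your proof is correct and follows essentially the same route as the paper: pass to the closure $q$, argue that a left-arrows-only trail in $p$ extends to a left-arrows-only trail in $q$, apply Corollary~\ref{always_left} there, and transfer the conclusion back to $p$. You spell out the correspondence more carefully than the paper does (in particular the orbit argument showing no right arrow can enter $\tilde{T}$ in a later copy of $p$), but the idea is the same.
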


\begin{proof}
Letting $q$ be the closure of $p$ we may use Corollary~\ref{always_left} to see
that all trails of $q$ with only left arrows are always on the left side. The
closed trails of $q$ which contain only left arrows correspond to the
trails of $p$ which contain only left arrows. This is because any trail
containing a right arrow in $q$ corresponds to one of the closed trails of $p$.
Now, we note that the property of being on the left or right side in a path only
depends on the position of the trail at each $i$\textsuperscript{th} rule in the
path, which are the same in $p$ and $q$.
\end{proof}

\begin{lemma}
If $p$ is a path with all trails containing right arrows closed, and the trails
starting at positions $a_1, a_2, \dots, a_k$ are all the trails containing
only left arrows, and there are $m$ left arrows in total in these trails, then
$p$ maps $a_i$ to $a_{i - m}$ (subscripts considered modular).
\end{lemma}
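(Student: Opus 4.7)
The plan is to track the $k$ left-only trails simultaneously through the path $p = p_1 \dots p_\ell$ under a sorted labelling by their current positions, and to show that each rule either leaves this labelling fixed or cyclically shifts it by $-1$. To avoid collision with the letter $k$ used for the number of left-only trails, I write $K$ for the parameter of $\Gamma_n$ (so $n = 2K+1$ or $n = 2K$); then the left arrow of $\pi_j$ is $1 \to K-j$ and the left cycle of $\pi_j$ consists of the positions $\{1, 2, \dots, K - j\}$.

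First I would observe that $p$ preserves the set $\{a_1, \dots, a_k\}$: since $p$ is a bijection of $\{1, \dots, n\}$ and every trail containing a right arrow is closed, each starting position of a right-arrow trail is fixed by $p$, forcing the complementary set $\{a_1, \dots, a_k\}$ to be sent to itself. Index so that $a_1 < a_2 < \dots < a_k$, and write $S_t = \{q_1^t < q_2^t < \dots < q_k^t\}$ for the sorted positions of the $k$ left-only trails at time $t$ (so $S_0 = S_\ell = \{a_i\}$), calling the trail at $q_i^t$ the rank-$i$ trail at time $t$.

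The step analysis is the core: by the previous lemma every $q_i^{t-1}$ lies on the left side of $\pi_{j_t}$, so $q_i^{t-1} \leq K - j_t$. If $1 \notin S_{t-1}$ (position $1$ is occupied at time $t-1$ by a right-arrow trail instead of one of our $k$ trails), every left-only trail moves backward by one via a backward arrow of the left cycle and all ranks are preserved. If $1 \in S_{t-1}$ then $q_1^{t-1} = 1$, the rank-$1$ trail jumps via the left arrow of $\pi_{j_t}$ to $K - j_t$, and every other trail moves backward by one; since $q_k^{t-1} \leq K - j_t$ the new sorted positions are $q_2^{t-1}-1 < \dots < q_k^{t-1}-1 < K - j_t$, so the previous rank-$1$ trail becomes rank $k$ and every previous rank-$i$ trail with $i \geq 2$ becomes rank $i-1$, a cyclic rank-shift by $-1$.

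A left arrow fires inside a left-only trail exactly when the rank-$1$ left-only trail sits at position $1$, which happens exactly $m$ times during $p$. Summing the shifts, the total rank shift from time $0$ to time $\ell$ is $-m \pmod{k}$; and since $S_\ell = S_0$, the position of rank $i$ at time $\ell$ is $a_i$, so the trail starting at $a_i$ ends at $a_{i-m}$, giving $p(a_i) = a_{i-m}$ as required. The main delicate point is verifying that $K - j_t$ becomes the new maximum after a jump, which reduces to the inequality $q_k^{t-1} - 1 < K - j_t$; this is immediate from the left-side condition $q_k^{t-1} \leq K - j_t$, but the whole argument hinges on repeatedly invoking the previous lemma to keep the trails on the left side at every step.
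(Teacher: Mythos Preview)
Your proposal is correct and follows essentially the same approach as the paper's proof. The paper's argument is terser---it simply says the induction is ``trivial'' and then observes the same dichotomy you spell out: at each rule either all left-only trails shift backward by one (order preserved), or the leftmost trail takes the left arrow and becomes rightmost (a cyclic shift), with the latter occurring exactly $m$ times---but your explicit rank-tracking with the sorted sets $S_t$ and the verification that $q_k^{t-1}-1 < K-j_t$ is just a careful unpacking of the same idea, relying identically on the previous lemma to confine the trails to the left side.
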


\begin{proof}
This is provable by a trivial induction. Firstly, as all trails other than those
starting at each $a_i$ are closed, we see that $p$ maps each $a_i$ to some
$a_j$. Now, as all the trails are always on the left side, only two things may
happen at each rule. Either all trails are mapped backward by backward arrows,
and thus their left right ordering is preserved, or the trail on the far left is
mapped by a left arrow and becomes the trail on the far right. The latter case
happens exactly $m$ times. Hence, the left right ordering of the trails starting
at $a_1, a_2, \dots, a_k$ is cycled $m$ times.
\end{proof}

\begin{corollary}
\label{theyre_a_transposition}
If a path $p$ has all trails with right arrows closed, and contains exactly two
trails whose only forward arrows are left arrows, and those trails together
contain an even number of left arrows, then the path $p$ is closed.
\end{corollary}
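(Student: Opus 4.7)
The plan is to apply the previous lemma directly. By hypothesis, every trail of $p$ that contains a right arrow is closed, so the only trails whose closedness remains in question are those whose only forward arrows are left arrows. By hypothesis there are exactly two of these; call their starting positions $a_1$ and $a_2$, and let $m$ denote the total number of left arrows in these two trails.

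By the immediately preceding lemma (with $k = 2$), the permutation $p$ sends $a_i$ to $a_{i - m}$, with subscripts taken modulo $2$. Since $m$ is even by hypothesis, we have $i - m \equiv i \pmod{2}$, so $p(a_1) = a_1$ and $p(a_2) = a_2$. This is precisely the statement that the trails starting at $a_1$ and $a_2$ are closed.

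Combining this with the hypothesis that every trail containing a right arrow is already closed, we conclude that every trail of $p$ is closed. By the definition of a closed path, this means $p$ itself is closed, as required.

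The only subtlety to check while writing the proof is that the enumeration hypothesis of the preceding lemma is exactly satisfied, namely that $a_1$ and $a_2$ really do exhaust the trails whose forward arrows are exclusively left arrows; this is built into the statement of the corollary, so no additional argument is needed. There is no real obstacle here: the corollary is essentially a parity observation applied to the cyclic action described in the preceding lemma.
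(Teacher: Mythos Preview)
Your proof is correct and follows exactly the paper's approach: apply the preceding lemma with two left-arrow-only trails and use the parity of $m$ to conclude $a_i \mapsto a_{i-m} = a_i$ modulo $2$. The only cosmetic difference is that the paper's proof plugs in the specific value $m = 6$ (the case actually needed later), whereas you keep $m$ a general even number, which matches the stated corollary more faithfully.
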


\begin{proof}
Letting the trails starting at positions $a_1$ and $a_2$ be those containing
only left forward arrows, we may apply the previous lemma to show that $a_1$ is
mapped to $a_{1 - 6} = a_1$ and $a_2$ is mapped to $a_{2 - 6} = a_2$, hence the
trails at $a_1$ and $a_2$ are closed.
\end{proof}

\section{The Odd Case}

We now begin to count the closed paths of length $n + 1$
in the case where $n = 2k + 1$.
Throughtout this section, we consider the path $p = p_1 p_2 \dots p_{n + 1}$
which is assumed to be closed.

\begin{lemma}
\label{mirror_zeroes}
If $p_1 = \pi_0$, then $p_{k + 1} = \pi_0$, and the
trail beginning at $k + 1$ contains two right arrows.
\end{lemma}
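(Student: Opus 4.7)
The plan is to follow the closed trail starting at position $k+1$ in $p$, using the earlier classification of forward arrows together with a net-displacement calculation. Because the right arrow of $\pi_0$ has source $k+1$, the first step $p_1 = \pi_0$ sends this trail to position $n$, so it immediately uses a right arrow.

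First I would rule out the mixed case of Lemma~\ref{right_left_pair}. If the trail contained both a left and a right arrow, it would consist of exactly one pair. But the right arrow at $p_1 = \pi_0$ cannot be the right arrow of a pair, since this would require the cyclic predecessor to equal $\pi_{-1}$, which is not a rule; and the pair cannot sit elsewhere either, since the trail already uses the right arrow at $p_1$ while the pair case admits only one right arrow in total. Thus, by Lemma~\ref{right_arrows_rule}, the trail has exactly two right arrows and no left arrows.

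Next I would identify both right arrows via displacement. The right arrow of $\pi_j$ has displacement $k + j$ for $j < k$ and $2k$ for $j = k$, so every right arrow contributes at least $k$. Summing displacements to zero around the closed trail gives $d_1 + d_2 = n - 1 = 2k$; together with $d_i \ge k$ this forces $d_1 = d_2 = k$, so both right arrows come from $\pi_0$. Finally, tracing forward from position $n$ after $p_1$, the purely backward arrows decrement the trail position by one per step, so the trail returns to position $k+1$ (the source of a $\pi_0$ right arrow) after exactly $k$ more steps. The next rule in the path must then itself be $\pi_0$, which identifies the second occurrence claimed in the lemma and confirms the two right arrows. The only delicate point is the cyclic bookkeeping used to rule out the pair case; the rest reduces to a direct arithmetic check.
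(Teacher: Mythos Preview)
Your argument is correct and tracks the paper's closely: both follow the closed trail through the right arrow of $p_1 = \pi_0$, invoke Lemmas~\ref{right_arrows_rule} and~\ref{right_left_pair} to constrain the forward arrows, and then use a net-displacement count to identify the second right arrow as coming from $\pi_0$. The one cosmetic difference is that you rule out the mixed case by arguing that the right arrow of $p_1$ cannot belong to a pair (there being no $\pi_{-1}$), whereas the paper simply observes that the remaining forward arrow must have displacement $(n-1)-k = k$, which exceeds any left arrow's maximum of $k-1$; the paper's route is marginally more direct but the content is the same.

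One small bookkeeping point worth flagging: your own trace (``after exactly $k$ more steps \dots\ the next rule'') actually places the second $\pi_0$ at $p_{k+2}$, not $p_{k+1}$. This is in fact the correct index, consistent with the period-$(k+1)$ structure $p_i = p_{i+(k+1)}$ invoked later in Proposition~\ref{theyre_tau_sequences}; the $p_{k+1}$ in the lemma statement appears to be an off-by-one slip in the paper that neither proof should reproduce.
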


\begin{proof}
We consider the trail starting with the right arrow of $p_1$. As this trail is
closed, by Lemma~\ref{right_arrows_rule} and Lemma~\ref{right_left_pair} we see
that it contains one more forward arrow which is either a right or a left arrow.
Hence, this trail maps backwards $n - 1$ spaces, the right arrow at $p_1$ maps
forward $k$ spaces, and therefore the other forwards arrow maps forward
$(n - 1) - k = k$ spaces. The most any left arrow maps forward is $k - 1$
spaces, hence the other arrow in the trail is a right arrow. The only rule with
a right arrow which maps forward $k$ spaces is $\pi_0$. To see
$p_{k + 1} = \pi_0$, we simply follow the backwards arrows after $p_1$.
\end{proof}

\begin{corollary}
\label{at_most_six}
There are at most six occurences of the rule $\pi_0$ in the path $p$.
\end{corollary}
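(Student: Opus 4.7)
The plan is to apply Lemma~\ref{mirror_zeroes} to each occurrence of $\pi_0$ in $p$, exploiting the corollary of Lemma~\ref{rotate} so that closure is preserved under rotation. For each $i$ with $p_i = \pi_0$, the rotation $p^{i-1}$ is a closed path of length $n + 1$ with $\pi_0$ at position $1$, so Lemma~\ref{mirror_zeroes} produces a second $\pi_0$ at position $k + 1$ of $p^{i-1}$, that is, at position $i + k$ of $p$ (indices modulo $n + 1$), together with a closed trail of $p$ whose only forward arrows are exactly the two right arrows coming from the $\pi_0$'s at positions $i$ and $i + k$.

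Next, I would observe that the trails of $p$ partition its arrows, since they are the cycles of the permutation $p_1 p_2 \cdots p_{n+1}$ acting on positions. In particular, each right arrow in $p$ belongs to a unique closed trail. Combining this with the preceding paragraph, the right arrows arising from occurrences of $\pi_0$ are grouped two at a time into closed trails containing exactly two right arrows each; equivalently, the number of $\pi_0$'s in $p$ equals twice the number of such trails.

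Finally, I would invoke Lemma~\ref{at_most_three}, which bounds the number of closed trails of $p$ containing two right arrows by three. Multiplying gives at most $2 \cdot 3 = 6$ occurrences of $\pi_0$.

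The only point requiring minor care is the bookkeeping of the pairing: I would need to check that distinct occurrences of $\pi_0$ cannot all belong to the same two-right-arrow trail, which is immediate because such a trail has exactly two right arrows. I do not anticipate a substantial obstacle; all the conceptual work has already been done in Lemmas~\ref{mirror_zeroes}, \ref{right_arrows_rule}, \ref{right_left_pair}, and \ref{at_most_three}, and this corollary is essentially a matter of combining them.
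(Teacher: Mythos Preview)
Your proposal is correct and follows exactly the route the paper takes: the paper's proof is simply the one-line statement that the result follows from combining Lemma~\ref{mirror_zeroes} with Lemma~\ref{at_most_three}, and your write-up spells out precisely how that combination works. One minor imprecision: trails are not literally the cycles of the product permutation $p_1 p_2 \cdots p_{n+1}$, but your intended conclusion---that distinct trails share no arrows, so each right arrow lies in a unique trail---is correct and is all you need.
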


\begin{proof}
This is the result of the combination of Lemma~\ref{mirror_zeroes} and
Lemma~\ref{at_most_three}.
\end{proof}

\begin{lemma}
\label{decreasing}
If $p_1 = \pi_i$ for some $i \geq 1$, then $p_{n + 1} = \pi_{i - 1}$.
\end{lemma}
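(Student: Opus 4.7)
The plan is to study the trail $T$ that begins with the right arrow of $p_1$ and use the preceding structural lemmas about closed trails of length $n+1$ to pin down $p_{n+1}$. Since $p$ is closed, $T$ is closed, so Lemmas~\ref{right_arrows_rule} and~\ref{right_left_pair} leave only two possibilities: either $T$ consists of exactly two right arrows (and no left arrows), or $T$ contains exactly one pair (one right arrow, one left arrow, all other arrows backward).

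I would rule out the first case by a displacement computation specific to the odd setting. In $n = 2k+1$, the right arrow of $\pi_j$ moves forward exactly $k + j$ positions for every $0 \le j \le k$ (directly from the definition of $\pi_j$; the case $j = k$ fits the same formula), while each backward arrow moves back one. If $T$ had right arrows from $\pi_{j_1}$ and $\pi_{j_2}$ together with $n - 1 = 2k$ backward arrows, closedness of $T$ would force $(k + j_1) + (k + j_2) = 2k$, hence $j_1 = j_2 = 0$. This contradicts the hypothesis that $p_1 = \pi_i$ with $i \geq 1$ contributes a right arrow to $T$ which is not from $\pi_0$.

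Therefore $T$ must contain exactly one pair, and the unique right arrow of that pair is the right arrow of $p_1$. Invoking the wrap-around clause in the definition of a pair (with $p_{n+1}$ playing the role of $p_i$ and $p_1$ the role of $p_{i+1}$), the pair is the left arrow of $p_{n+1}$ together with the right arrow of $p_1$. Writing $p_{n+1} = \pi_l$ and $p_1 = \pi_{l+1}$ and comparing with $p_1 = \pi_i$ yields $l = i - 1$, i.e.\ $p_{n+1} = \pi_{i-1}$. The only genuinely new input is the short forward-displacement arithmetic in the second paragraph; the rest is bookkeeping against the earlier lemmas, so no step looks like a serious obstacle.
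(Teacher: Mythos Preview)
Your proposal is correct and follows essentially the same approach as the paper: both consider the closed trail through the right arrow of $p_1$, split into the two cases governed by Lemmas~\ref{right_arrows_rule} and~\ref{right_left_pair}, and eliminate the two-right-arrows case by the same forward-displacement arithmetic (your equation $(k+j_1)+(k+j_2)=2k$ is the symmetric rephrasing of the paper's $(n-1)-(k+i)=k-i<k$). Your write-up is in fact slightly more explicit than the paper's, which just says ``apply Lemma~\ref{right_left_pair} and are done'' without unpacking why the pair must sit at $(p_{n+1},p_1)$.
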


\begin{proof}
Consider the trail starting with the right arrow of $p_1$. If this trail
contains a left arrow we apply Lemma~\ref{right_left_pair} and are done.
Otherwise, Lemma~\ref{right_arrows_rule} shows that
there is exactly one other right arrow in the trail. The distance mapped
backward in the trail is $n - 1$, and the distance mapped forward by the
right arrow in $p_1$ is $k + i$, hence the distance mapped forward by the other
right arrow is $(n - 1) - (k + i) = k - i$, but all right arrows map forward at
least $k$.
\end{proof}

\begin{corollary}
\label{useful_decreasing}
If $p_i = \pi_j$ for $j \neq 0$, then $p_{i - 1} = \pi_{j - 1}$.
\end{corollary}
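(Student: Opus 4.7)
The plan is to reduce Corollary~\ref{useful_decreasing} directly to Lemma~\ref{decreasing} via a rotation. Since Lemma~\ref{decreasing} is stated only for the case where the rule of interest appears in position $1$ of the path, the natural move is to rotate $p$ so that $p_i$ is shifted to the first position.

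Concretely, I would consider the rotation $p^{i-1} = p_i p_{i+1} \cdots p_{n+1} p_1 \cdots p_{i-1}$ (using the convention introduced just before Lemma~\ref{rotate}). By the corollary to Lemma~\ref{rotate}, the closedness of $p$ implies that $p^{i-1}$ is also a closed path of length $n+1$. Its first entry is $p_i = \pi_j$ with $j \geq 1$, so Lemma~\ref{decreasing} applies and tells us that its last entry is $\pi_{j-1}$. The last entry of $p^{i-1}$ is $p_{i-1}$, so $p_{i-1} = \pi_{j-1}$, which is the desired conclusion.

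There is essentially no obstacle here: the entire content of the corollary is that Lemma~\ref{decreasing} is really a local statement about two consecutive entries of a closed path, and rotation invariance (already established) is precisely what lets us upgrade the statement from position $1$ to an arbitrary position $i$. A minor care point is the boundary case $i=1$, where we interpret $p_{i-1}$ as $p_{n+1}$, but this is exactly the original statement of Lemma~\ref{decreasing} and requires no additional argument.
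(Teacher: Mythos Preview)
Your argument is correct and is exactly the intended one: the paper states this as an immediate corollary of Lemma~\ref{decreasing} with no separate proof, and the only content is the rotation-invariance of closedness already established after Lemma~\ref{rotate}. Your handling of the boundary case $i=1$ is also the right observation.
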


\begin{proposition}
\label{theyre_tau_sequences}
The path $p$ is closed if, and only if,
\[
	p_i = \begin{cases}
		\pi_{a_i} & \text{for $1 \leq i \leq k + 1$}, \\
		\pi_{a_j} & \text{for $i = j + (k + 1)$, $1 \leq j \leq k + 1$},
	\end{cases}
\]
where $a_1, a_2, \dots, a_{k + 1}$ is a $\tau$-sequence.
\end{proposition}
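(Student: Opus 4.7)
The plan is to prove both directions of the biconditional; throughout write $p_i = \pi_{a_i}$. For the forward direction, suppose $p$ is closed. I would first show the full sequence $(a_1, \ldots, a_{n+1})$ is periodic with period $k + 1$. Whenever $a_i = 0$, rotate $p$ via Lemma~\ref{rotate} so that the $i$-th position becomes the first, then apply Lemma~\ref{mirror_zeroes} to conclude that a $\pi_0$ lies exactly $k + 1$ positions later. Between consecutive zeros Corollary~\ref{useful_decreasing} forces the indices to ascend as $0, 1, 2, \ldots$, so the full sequence is determined by its zeros and is therefore periodic. With periodicity in hand, the $\tau$-sequence conditions for $(a_1, \ldots, a_{k+1})$ fall out: (i) is automatic, (ii) is Corollary~\ref{useful_decreasing}, (iii) comes from Corollary~\ref{at_most_six} (at most six $\pi_0$s in total, hence at most three per mirror-half), and (iv) is inherited from closure of closed paths under rotation.

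For the backward direction, assume the mirror extension of a $\tau$-sequence and verify that every trail is closed by classifying trails by their right arrows. A right arrow originating at a rule $\pi_0$ at position $i$ is matched with the right arrow of the mirrored $\pi_0$ at position $i + k + 1$; the calculation inside Lemma~\ref{mirror_zeroes} run forward confirms these two together form a closed double-right trail. Every other right arrow lies in some $\pi_{a_i}$ with $a_i \geq 1$, and the $\tau$-sequence structure ensures $a_{i-1} = a_i - 1$, so the left arrow of $p_{i-1}$ pairs with the right arrow of $p_i$; by Lemma~\ref{left_right_ident} each such pair sits in its own closed trail. The remaining trails contain only left and backward arrows and are analysed via the lemma preceding Corollary~\ref{theyre_a_transposition}, combined with Corollary~\ref{always_left}.

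The main obstacle is in the backward direction, in handling trails that contain only left arrows. A count of right arrows (two per double-right trail and one per pair trail, summing to $n + 1$) shows that the number of left-only trails is exactly one less than the number of $\pi_0$-pairs; so it is zero, one, or two according as the $\tau$-sequence has one, two, or three zeros. The one-zero and two-zero cases close with essentially no work (a single left-only trail is always closed by the lemma preceding Corollary~\ref{theyre_a_transposition}, since the cyclic-shift hypothesis on the trail positions is trivially satisfied), whereas the three-zero case requires the parity hypothesis of that corollary. Here the six unpaired left arrows in the path distribute over the two left-only trails with even total, so the corollary applies; pinning this parity down cleanly across the possible three-zero $\tau$-sequences is where I expect the bulk of the technical work to lie.
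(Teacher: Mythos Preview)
Your approach is essentially identical to the paper's, both in the forward direction (periodicity from Lemma~\ref{mirror_zeroes} and Corollary~\ref{useful_decreasing}, then reading off the $\tau$-axioms) and in the backward direction (classify right arrows as either in a double-right trail or in a pair, invoke Lemma~\ref{left_right_ident}, then handle the residual left-only trails case-by-case on the number of zeros).

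Your worry about the three-zero case is misplaced: there is no ``bulk of technical work'' left. You have already computed that exactly six left arrows remain unaccounted for, and the hypothesis of Corollary~\ref{theyre_a_transposition} is precisely that the \emph{total} number of left arrows in the two left-only trails is even. Six is even; you are done. The only thing you need to justify is that all $n+1$ rules in the path carry a left arrow, and this is immediate: a $\tau$-sequence of length $k+1$ with three zeros has every entry at most $k-2<k$, so no $\pi_k$ appears. That is the one-line observation the paper makes, and it is all that is required.

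One small point you should make explicit (the paper leaves it implicit as a standing assumption for that block of lemmas): Corollary~\ref{always_left} and Corollary~\ref{theyre_a_transposition} require the monotonicity condition that whenever $p_i=\pi_j$ and $p_{i+1}=\pi_{j'}$ one has $j\geq j'-1$. For a path built from a doubled $\tau$-sequence this holds because consecutive entries either increment by one or drop to zero; you should state this check before invoking those results.
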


\begin{proof}
To show the implication, we first note
the combination of Lemma~\ref{mirror_zeroes} and
Corollary~\ref{useful_decreasing} show that $p_i = p_{i + (k + 1)}$.
Hence, we may now define the sequence $a_1 a_2 \dots a_{k + 1}$ such that
$p_i = \pi_{a_i}$, and therefore $p_{i + (k + 1)} = \pi_{a_i}$.
By Corollary~\ref{at_most_six}, we see that 0 may occur at most three times in
$\langle a_i \rangle$.
Finally, again from applying Corollary~\ref{useful_decreasing} and
considering rotations of $p$, we see that $\langle a_i \rangle$
is a $\tau$-sequence.

Now, to show the reverse implication, let $\langle a_i \rangle$ be an arbitrary
$\tau$-sequence and define the corresponding path $p$. As shown in
Lemma~\ref{mirror_zeroes} if some $a_i = 0$, this corresponds to a closed trail
with the two right arrows in $p_i$ and $p_{i + (k + 1)}$. Otherwise, if
$a_i \neq 0$, then $a_{i - 1} = a_i - 1$, and the right arrow of $p_i$ is paired
with the left arrow of $p_{i - 1}$. Therefore, all right arrows of $p$ are
either in closed trails with right arrows only or are paired, hence by
Lemma~\ref{left_right_ident} each pair corresponds to a distinct closed trail.

Again by Lemma~\ref{mirror_zeroes}, each pair of zeroes introduces exactly one
closed trail. In a $\tau$-sequence, we will have either one, two or three
zeroes, and hence one, two or three pairs
of zeroes in our path. We now consider each case separately, and count the
number of closed trails containing right arrows.

\begin{itemize}
	\item If we have one pair of zeroes in our path, these account for one
		closed trail. The remaining $(n + 1) - 2 = n - 1$ right arrows
		in our path each correspond to a pair in a distinct closed
		trail. Hence we have found a total of $n$ closed trails, and so
		$p$ is closed.
	\item If we have two pairs of zeroes in our path, these account for two
		closed trails. The remaining $(n + 1) - 4 = n - 3$ right arrows
		in our path each correspond to a pair in a distinct closed
		trail. Hence, we have found $n - 1$ closed trails in our path.
		However, this means the remaining trail has to start and end in
		the same position, and so our path must be closed.
	\item If we have three pairs of zeroes in our path, these account for
		three closed trails. The remaining $(n + 1) - 6 = n - 5$ right
		arrows in our path correspond to distinct pairs in our trail.
		Hence, we have found $n - 2$ closed trails in our path. The
		remaining trails in our path use only left arrows. As the
		$\tau$-sequence in question has more than one zero, all other
		numbers in the sequence
		are less than $k$, hence each corresponding rule in our
		path contains both a right and a left rule, hence there are 6
		left arrows in these two trails, so we may apply
		Corollary~\ref{theyre_a_transposition} to show all trails
		in the path are closed.
\end{itemize}
\end{proof}

\begin{theorem}
For $n = 2k + 1$, the G\'{o}mez graphs $G_m \in \mywgp{\Pi_n}$ satisfy
$\myaut{G_m} \cong S_m$.
\end{theorem}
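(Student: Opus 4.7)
The plan is to apply the test proposition at the end of Section~3. Fix any vertex $u \in \myvert{\Gamma_n}$ and let $v_i = \pi_i(u)$ for $0 \leq i \leq k$ be its alphabet-fixing out-neighbours. I will verify both conditions of that test.

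For the path-counting condition, observe that the number of paths of length $n$ from $v_i$ to $u$ in $\Gamma_n$ equals the number of closed paths of length $n + 1$ in $\Gamma_n$ whose first rule is $\pi_i$: prepending $\pi_i$ to a sequence of rules $q_1, \ldots, q_n$ realizing a path $v_i \to \cdots \to u$ yields a closed path beginning with $\pi_i$, and conversely. Proposition~\ref{theyre_tau_sequences} identifies these closed paths with $\tau$-sequences of length $k + 1$ whose initial entry is $i$, and the proposition of Section~2 then gives the count $\tau(k + 1, i) = \frac{1}{2}((k + 1 - i)^2 - (k + 1 - i) + 2)$. A direct computation yields $\tau(k + 1, i) - \tau(k + 1, i + 1) = k - i$, which is strictly positive for $0 \leq i \leq k - 1$, so these counts are strictly decreasing in $i$ and in particular pairwise distinct. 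Thus the first condition of the test is satisfied by taking the path length to be $n$ for every pair.

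For the short-path-or-multiplicity condition, note that $\tau(k + 1, i) \geq 2$ for every $i \in \{0, 1, \ldots, k - 1\}$, so the ``more than one path of length $n$'' clause holds for those $v_i$. Only the case $i = k$, where $\tau(k + 1, k) = 1$, requires separate treatment. Here the rule $\pi_k$ is the full cyclic shift $x_1 x_2 \ldots x_n \mapsto x_2 x_3 \ldots x_n x_1$, of order $n$ in $S_n$, so applying $\pi_k$ exactly $n - 1$ times to $v_k = \pi_k(u)$ returns to $u$, giving a path of length $n - 1 < n$. Both conditions of the test therefore hold, and we conclude $\myaut{G_m} \cong S_m$.

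The substance of the argument is entirely contained in Proposition~\ref{theyre_tau_sequences}, which reduces the enumeration of the relevant paths to the formula for $\tau$-sequences; the rest of the proof is bookkeeping. The only delicate point is the corner case $i = k$, for which the unique length-$n$ cycle back to $u$ fails the multiplicity clause and forces us to exhibit a shorter path, and this is precisely why the test was phrased to admit that alternative; the cyclic shift $\pi_k$ supplies the short path immediately.
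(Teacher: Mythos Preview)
Your proof is correct and follows essentially the same route as the paper: invoke Proposition~\ref{theyre_tau_sequences} to identify closed paths of length $n+1$ with $\tau$-sequences, use the formula $\tau(k+1,i)=\tfrac{1}{2}((k+1-i)^2-(k+1-i)+2)$ to see the counts are pairwise distinct (subregularity) and at least $2$ for $i<k$, and handle the exceptional $i=k$ via $\pi_k^n=e$ to get a path of length $n-1$ (alphabet stability). You have made the numerical verification of distinctness more explicit than the paper does, but the argument is the same.
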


\begin{proof}
We have shown that the paths of length $n + 1$ which lead from a vertex back to
itself correspond to $\tau$-sequences. Hence, for the vertex
$e \in \myvert{\Gamma_n}$ the number of paths from each $\pi_i \in \Pi_n$ to $e$
of length $n$ is distinct for each $i$. This shows that the G\'{o}mez graphs are
subregular. Finally, we notice there is one path of length $n$ from $\pi_k$ to
$e$, but as $\pi_k$ is simply an $n-cycle$ we have $\pi_k^n = e$, hence there is
also a path of length $n - 1$ from $\pi_k$ to $e$. Hence, each $pi_i$ satisfies
either $\mydist{\pi_i}{e} < n$ or there is more than one path of length $n$ from
$\pi_i$ to $e$. Hence the G\'{o}mez graphs are alphabet stable.
\end{proof}

\section{The Even Case}

In this section, we deal with the case where $n = 2k$ and $k > 1$.
We will consider an
arbitrary closed path $p = p_1 p_2 \dots p_{n + 1}$.

\begin{lemma}
\label{even_zero_structure}
If $p_1 = \pi_0$, then $p_{k + 2} = \pi_1$, and the closed trail starting at
$k + 1$ contains two right arrows.
\end{lemma}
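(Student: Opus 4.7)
The plan is to imitate the proof of Lemma~\ref{mirror_zeroes}, adjusting for the parity-dependent distance formulas. First I would identify the relevant trail: the right arrow of $\pi_i$ (for $i<k$) starts at input position $k-i+1$, so the right arrow of $p_1=\pi_0$ starts at position $k+1$ and lands at position $n=2k$. This is the ``trail starting at $k+1$'' named in the statement.

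Next I would apply Lemmas~\ref{right_arrows_rule} and \ref{right_left_pair} to split into two cases for this closed trail: either it contains exactly two right arrows, or it contains a single right--left pair. The pair case is ruled out immediately, since the right arrow of $p_1$ could only pair with a left arrow of $p_{n+1}=\pi_{-1}$, which does not exist. Hence the trail contains exactly two right arrows, establishing the second half of the lemma.

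Then I would do the forward-distance accounting. A closed trail of length $n+1$ with only two forward arrows has $n-1=2k-1$ backward arrows, so the total forward distance equals $2k-1$. Checking the definitions, the right arrow of $\pi_i$ has forward distance $k+i-1$ for $i<k$, and $\pi_k$ has its unique forward arrow of distance $n-1=2k-1$. The right arrow of $\pi_0$ contributes $k-1$, leaving $k$ for the second right arrow; since $k>1$, the unique rule supplying this is $\pi_1$.

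Finally I would locate the occurrence of $\pi_1$. Because the trail's only forward arrows are these two right arrows, every intervening rule moves the trail backward by one. Starting from position $2k$ after $p_1$, the trail is at position $2k-(j-1)$ after $p_j$. The right arrow of $\pi_1$ has input position $k$, so we need $2k-(j-1)=k$, giving $j=k+1$ and therefore $p_{k+2}=\pi_1$. The most delicate step is the distance bookkeeping: in the odd case of Lemma~\ref{mirror_zeroes} the remainder came out to $k$ matching $\pi_0$, whereas here the shift to $k+i-1$ bumps it to $\pi_1$, which is precisely what breaks the simple mirror symmetry and produces the index $k+2$ rather than $k+1$.
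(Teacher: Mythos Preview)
Your proof is correct and follows essentially the same approach as the paper: identify the trail through the right arrow of $p_1$, rule out the possibility that its second forward arrow is a left arrow, then use the forward/backward distance balance to pin the second right arrow as that of $\pi_1$ and locate it at $p_{k+2}$. The only cosmetic difference is how you exclude the left-arrow case: you invoke Lemma~\ref{right_left_pair} and observe that a pair would force $p_{n+1}=\pi_{-1}$, whereas the paper invokes Lemma~\ref{right_arrow_first} and observes that no left arrow sends position $1$ to position $k+1$; both arguments are valid and equally short.
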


\begin{proof}
The closed trail starting at $k + 1$ is the trail starting with the right arrow
of $p_1$. Hence, if we assume there are no further right arrows in this trail,
we may apply Lemma~\ref{right_arrow_first} to show that $p_{n + 1}$ contains a
left arrow mapping $1$ to $k + 1$. However, no such right arrow exists.
Therefore we may use Lemma~\ref{right_arrows_rule} to show
to show the trail contains two right arrows.  The second right
arrow must have size $(n - 1) - (k - 1) = k$ so the number of spaces mapped
forwards equals those mapped backwards. Hence, the other right arrow is from
rule $\pi_1$, and this is in the trail after being mapped backwards $k + 1$
positions after $\pi_0$, hence occurs in position $p_{k + 2}$.
\end{proof}

\begin{corollary}
The rule $\pi_0$ occurs at most three times in $p$.
\end{corollary}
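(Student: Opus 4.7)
The plan is to emulate the odd-case proof of Corollary \ref{at_most_six}, substituting Lemma \ref{even_zero_structure} for Lemma \ref{mirror_zeroes}. In the odd case each occurrence of $\pi_0$ was paired with another $\pi_0$ to yield a count of six; in the even case, each $\pi_0$ is instead paired with a $\pi_1$ sitting $k + 1$ positions later, so I expect the bound to sharpen to three.

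Concretely, for each index $i$ with $p_i = \pi_0$, I would apply Lemma \ref{even_zero_structure} to the rotation $p^{i - 1}$ (which begins with $\pi_0$) to produce a closed trail in $p^{i - 1}$ containing two right arrows, namely the one at position $1$ and the one at position $k + 2$. By Lemma \ref{rotate} this corresponds to a closed trail of $p$ itself, containing the right arrow of $p_i = \pi_0$ and the right arrow of $p_{i + k + 1} = \pi_1$ (subscripts taken modulo $n + 1$).

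These trails are distinct for distinct occurrences of $\pi_0$: the trail associated with position $i$ contains exactly one right arrow coming from an occurrence of $\pi_0$, namely the one at position $i$, since Lemma \ref{even_zero_structure} pins the other right arrow of the trail down as coming from $\pi_1$. Thus the assignment from $\pi_0$-positions to closed trails with two right arrows is injective, and by Lemma \ref{at_most_three} there are at most three such trails, giving at most three occurrences of $\pi_0$ in $p$. The only point requiring care is this injectivity step, but it reduces to the elementary observation that $\pi_1 \neq \pi_0$, so no substantive obstacle arises.
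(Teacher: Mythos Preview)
Your proof is correct and mirrors the odd-case argument exactly as intended. The paper's one-line proof cites Lemma~\ref{left_arrows_suck} rather than Lemma~\ref{at_most_three}; this is almost certainly a slip (Lemma~\ref{at_most_three} is what the structure of the argument calls for, and its own proof rests on Lemma~\ref{left_arrows_suck}), so your choice of lemma is the right one and your injectivity observation via $\pi_1 \neq \pi_0$ fills in the only nontrivial detail the paper leaves implicit.
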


\begin{proof}
This is an immediate consequence of Lemma~\ref{even_zero_structure} and
Lemma~\ref{left_arrows_suck}.
\end{proof}

\begin{lemma}
If $p_1 = \pi_1$, then either $p_{k + 1} = \pi_0$ or $p_{n + 1} = \pi_0$.
\end{lemma}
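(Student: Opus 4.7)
The plan is to trace the closed trail $T$ that begins with the right arrow of $p_1 = \pi_1$. In the even case $n = 2k$, this right arrow carries the trail from position $k$ to position $n$, a forward move of $k$ spaces. Since $p$ is closed, $T$ is a closed trail of length $n + 1$, so the total forward distance and total backward distance along $T$ agree. I would then split on whether $T$ contains a further right arrow, leveraging Lemma~\ref{right_arrow_first}: either a second right arrow appears, or $T$ must include the left arrow of $p_{n+1}$.

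If $T$ contains a second right arrow, Lemma~\ref{right_arrows_rule} tells us it contains exactly two. Balance forces the second to have size $(n - 1) - k = k - 1$. Now for $0 \leq j < k$ the right arrow of $\pi_j$ has size $k + j - 1$, while $\pi_k$ contributes a single cycle of size $n - 1$; only $\pi_0$ matches size $k - 1$. After $p_1$ the trail sits at position $n$, so the next $k - 1$ rules must each contribute a backward arrow, carrying the trail down to position $k + 1$, which is the source of $\pi_0$'s right arrow. This places $\pi_0$ at index $k + 1$, giving $p_{k+1} = \pi_0$.

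Otherwise $T$ contains no further right arrow, and Lemma~\ref{right_arrow_first} shows $T$ contains the left arrow of $p_{n+1}$. By Lemma~\ref{right_left_pair} this left arrow is paired with the right arrow of $p_1$, and the pair relation, which requires $p_i = \pi_j$ and $p_{i+1} = \pi_{j+1}$ (indices taken modulo $n+1$), combined with $p_1 = \pi_1$, forces $p_{n+1} = \pi_0$. The essential content of the argument is the arithmetic in the first case that uniquely identifies $\pi_0$ as the source of the second right arrow; the rest is direct application of previously established lemmas, and I do not expect a significant obstacle.
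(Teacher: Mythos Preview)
Your argument is correct and follows the same route as the paper: trace the closed trail through the right arrow of $p_1$, split on whether a second right arrow appears, and in the first case do the size arithmetic $(n-1)-k=k-1$ to identify $\pi_0$ and locate it at $p_{k+1}$, while in the second case apply Lemma~\ref{right_left_pair} to force $p_{n+1}=\pi_0$. The paper's proof is terser, deferring the first-case arithmetic to ``previous logic'' (the proof of Lemma~\ref{even_zero_structure}), and invokes Lemma~\ref{right_left_pair} directly in the second case without the intermediate appeal to Lemma~\ref{right_arrow_first}; your explicit computation and extra citation are fine but not needed.
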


\begin{proof}
Consider the trail starting with the right arrow of $p_1$.
If this trail contains another right arrow, then Lemma~\ref{right_arrows_rule}
shows it contains exactly two right arrows, and following previous logic the
other right arrow is $\pi_0$ at position $p_{k + 1}$. Otherwise we may apply
Lemma~\ref{right_left_pair} to get that $p_{n + 1} = \pi_0$.
\end{proof}

\begin{lemma}
\label{even_decreasing}
If $p_1 = \pi_i$ for some $i \geq 2$, then $p_{n + 1} = \pi_{i - 1}$.
\end{lemma}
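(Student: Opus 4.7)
The plan is to mirror the structure of the odd-case argument in Lemma~\ref{decreasing}, with the only substantive change being the arithmetic of forward-distances. First I would set up by considering the trail in $p$ which begins with the right arrow of $p_1 = \pi_i$, and split into the usual two cases according to whether this trail contains a left arrow.

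In the first case, if the trail contains a left arrow, then Lemma~\ref{right_left_pair} applies and tells us the trail contains exactly one pair (its other arrows being backwards). The pair involves the right arrow of $p_1$, so by the wrap-around case in the definition of a pair the left arrow of $p_{n+1}$ is paired with the right arrow of $p_1$. Since a pair requires $p_{n+1} = \pi_j$ and $p_1 = \pi_{j+1}$, we read off $j = i - 1$, and hence $p_{n+1} = \pi_{i-1}$ as desired.

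In the second case, suppose the trail contains no left arrow. Then Lemma~\ref{right_arrows_rule} gives that the trail contains exactly two right arrows, so its remaining $n - 1$ arrows are all backwards. The trail is closed, so the total forward distance must equal the total backward distance $n - 1 = 2k - 1$. In the even case ($n = 2k$) the right arrow of $\pi_j$ for $0 \leq j < k$ runs from source position $k - j + 1$ to position $n$, giving a forward jump of $k + j - 1$ (and the formula also holds for $j = k$). In particular the right arrow of $p_1 = \pi_i$ jumps $k + i - 1$ spaces, so the other right arrow in the trail must jump $(2k - 1) - (k + i - 1) = k - i$ spaces. However, every right arrow jumps at least $k - 1$ spaces (the minimum being attained by $\pi_0$), and the hypothesis $i \geq 2$ forces $k - i \leq k - 2 < k - 1$, which is the contradiction we need.

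The main ``obstacle'' is really just noticing that the right-arrow jump of $\pi_i$ is $k + i - 1$ in the even case rather than $k + i$ as in the odd case; this single shift is precisely why the bound on $i$ in the statement must be $i \geq 2$ here while $i \geq 1$ sufficed in Lemma~\ref{decreasing}. Aside from tracking this correctly, the proof is a routine adaptation.
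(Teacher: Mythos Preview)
Your proposal is correct and follows essentially the same approach as the paper, which simply states ``Here we follow the same reasoning as Lemma~\ref{decreasing}.'' You have correctly carried out that reasoning with the appropriate arithmetic adjustment for the even case, and your explanation of why the hypothesis must be strengthened to $i \geq 2$ is exactly the point.
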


\begin{proof}
Here we follow the same reasoning as Lemma~\ref{decreasing}.
\end{proof}

\begin{lemma}
\label{non_exist_len_n}
If $p$ is a closed path of length $n$,
no rule $\pi_i$ where $0 < i < k$ may occur in $p$.
\end{lemma}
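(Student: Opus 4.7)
The plan is to analyze the closed trails of $p$. Since $p$ is closed every trail is closed, and since every arrow of $p$ belongs to some trail, it suffices to show that the right arrow of $\pi_i$ for $0 < i < k$ cannot lie in any closed trail of length $n$.

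My first step is to establish the length-$n$ analogue of Lemma~\ref{right_left_pair}: no closed trail of length $n$ can contain both a right arrow and a left arrow. The rotation-and-counting argument from that lemma carries through and in fact tightens because the path is one step shorter. After rotating so that a right arrow of the trail sits at $q_1$, the trail is at position $n$ immediately after $q_1$, and reaching position $1$ (a prerequisite for entering a left arrow) costs at least $n - 1$ further backward steps. Any left arrow of the trail must therefore occur no earlier than $q_{n+1}$, but a length-$n$ path has no such position, so no mixing is possible. Consequently every closed trail of $p$ that contains a right arrow consists only of right arrows and backward arrows.

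Next I would classify those right-arrow-only closed trails. If one has $r$ right arrows, the total forward displacement equals the number of backward arrows, namely $n - r$. Since each right arrow has forward displacement at least $k - 1$, with equality only for $\pi_0$, we have $r(k - 1) \le n - r = 2k - r$, and hence $r \le 2$. When $r = 1$ the lone right arrow must have displacement $n - 1 = 2k - 1$, which only $\pi_k$ provides; when $r = 2$ the total $n - 2 = 2(k - 1)$ forces each right arrow to have displacement exactly $k - 1$, so each comes from $\pi_0$. The right arrow of $\pi_i$ has forward displacement $k + i - 1$, and for $0 < i < k$ this value is strictly greater than $k - 1$ and strictly less than $2k - 1$, matching neither permitted profile. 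This contradicts the fact that the right arrow of $\pi_i$ must lie in some closed trail, so $\pi_i$ cannot appear in $p$. The main obstacle is really the first step, where the length-$n$ versus length-$(n + 1)$ distinction has to be leveraged to eliminate the ``one pair'' loophole of Lemma~\ref{right_left_pair}; once that is done, the rest is a short arithmetic check on right-arrow displacements.
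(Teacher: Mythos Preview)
Your proof is correct and follows essentially the same reasoning as the paper. Both arguments hinge on the observation that after a right arrow the trail sits at position $n$, and reaching position $1$ requires $n-1$ further steps, which a length-$n$ path cannot accommodate; you package this as a standalone ``no mixed trails in length $n$'' lemma and then classify the pure right-arrow trails arithmetically, while the paper applies the same counting directly to the trail through the right arrow of the offending $\pi_i$.
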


\begin{proof}
Suppose $p$ contains some $\pi_i$ with
$0 < i < k$. Rotate $p$ as necessary so that $p_1 = \pi_i$. Consider the trail
starting with the right arrow of $\pi_i$. The right arrow of $\pi_i$ maps
$(k - 1) + i$ spaces forward. If this trail contains another right arrow, the
total distance mapped forward in the trail is at least $2(k - 1) + i > n - 2$.
The total distance mapped backward in the, however, trail is at most $n - 2$.
Therefore, this trail must contain a left arrow. This means that there is some
$j$ such that $(p_2 p_3 \dots p_j)(n) = 1$. The smallest $j$ this can occur for
is $j = n$, but this leaves no further rule to contain a left arrow in the
trail. Therefore, no such path can exist.
\end{proof}

\begin{proposition}
The path $p$ is closed if, and only if, $p_i = \pi_{a_i}$ for some
$\sigma$-sequence $\langle a_i \rangle$.
\end{proposition}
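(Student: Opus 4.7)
The plan is to prove both directions by imitating Proposition~\ref{theyre_tau_sequences}. For the forward implication, I would verify each of the six defining properties of a $\sigma$-sequence in turn. Property (i) is immediate; (ii) is Lemma~\ref{even_zero_structure} applied after rotating $p$ so that $p_1 = \pi_0$; (iii) is the lemma stating that $p_1 = \pi_1$ forces $p_{k+1} = \pi_0$ or $p_{n+1} = \pi_0$, again after rotation; (iv) is Lemma~\ref{even_decreasing}; (v) is the corollary to Lemma~\ref{even_zero_structure}; and (vi) is inherited from the corollary to Lemma~\ref{rotate}.

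For the backward implication, let $z$ denote the number of zeros in the $\sigma$-sequence. For each zero $a_i = 0$, property (ii) gives $a_{i+k+1} = 1$, and a direct trace shows that the trail beginning with the right arrow of $p_i = \pi_0$ closes using exactly two right arrows, namely the one from $p_i$ and the one from $p_{i+k+1} = \pi_1$. Every other right arrow in the path is paired with a left arrow on the preceding rule: if $a_j \geq 2$, property (iv) gives $a_{j-1} = a_j - 1$; and if $a_j = 1$ with $a_{j+k} \neq 0$, property (iii) forces $a_{j-1} = 0$. Lemma~\ref{left_right_ident} then places each such pair in its own closed trail.

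A count completes the argument. There are $z$ zero-trails and $2k + 1 - 2z$ pair-trails, so $2k + 1 - z$ of the $n = 2k$ trails are closed, leaving $z - 1$ left-only trails to handle. For $z = 1$ there are none; for $z = 2$ the single remaining trail is forced closed because the composition of rules is a permutation of the $n$ positions that already fixes the other $n - 1$; for $z = 3$ a separate structural check shows that no entry of the sequence equals $k$, so every rule carries a left arrow and the $n + 1 = 2k + 1$ left arrows split into $2k - 5$ sitting in pairs and six sitting in the two left-only trails, at which point Corollary~\ref{theyre_a_transposition} closes them.

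The most delicate step is showing the zero-trail and pair assignments truly partition the right arrows. This amounts to verifying that the two clauses of property (iii) ($a_{j-1} = 0$ versus $a_{j+k} = 0$) correspond precisely to whether the $\pi_1$ at position $j$ is a ``leading'' $1$ (paired with its preceding zero) or a ``mirror'' $1$ (absorbed into the zero-trail of the zero at position $j - k - 1$), and that these two roles are disjoint so that no right arrow is double-counted.
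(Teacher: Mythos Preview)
Your proposal is correct and follows essentially the same approach as the paper, which simply writes ``We follow the same approach as the proof of Proposition~\ref{theyre_tau_sequences}.'' You have carefully filled in the details of that deferral: the forward direction checks the six $\sigma$-sequence axioms against the corresponding lemmas of the even section, and the backward direction mirrors the three-case count on the number of zeros, including the use of Corollary~\ref{theyre_a_transposition} for $z=3$. Your observation that the two clauses of axiom (iii) are mutually exclusive (so that the right-arrow partition is well defined) is exactly the content of the paper's lemma on $01$-groups, and your claim that no entry equals $k$ when $z\geq 2$ follows from the $\sigma(k,n)=2$ computation.
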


\begin{proof}
We follow the same approach as the proof of
Proposition~\ref{theyre_tau_sequences}.
\end{proof}

\begin{theorem}
For $n = 2k$, the G\'{o}mez graphs $G_m \in \mywgp{\Pi_n}$ satisfy $\myaut{G_m}
\cong S_m$.
\end{theorem}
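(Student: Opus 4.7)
The plan is to mirror the odd case. By the preceding proposition, closed paths of length $n + 1$ in $\Gamma_n$ are in bijection with $\sigma$-sequences of length $n + 1 = 2k + 1$, with a closed path beginning with the rule $\pi_i$ corresponding to a $\sigma$-sequence with $a_1 = i$. Hence the number of paths of length $n$ in $\Gamma_n$ from the neighbour $\pi_i$ of $e$ back to $e$ is exactly $\sigma(i, n + 1)$. It then suffices to verify the two conditions of the word graph test for $\myaut{G_m} \cong S_m$: path-count subregularity and alphabet stability.

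For subregularity, the chain $\sigma(1, n + 1) > \sigma(2, n + 1) > \cdots > \sigma(k, n + 1) = 2$ from the $\sigma$-sequence lemmas separates each pair $(\pi_i, \pi_j)$ with $1 \leq i < j \leq k$ by path counts of length $n$, and the inequality $\sigma(0, n + 1) \geq 3 > 2 = \sigma(k, n + 1)$ separates $\pi_0$ from $\pi_k$ by the same count. The remaining pairs $(\pi_0, \pi_j)$ with $0 < j < k$ require a different path length: by Lemma~\ref{non_exist_len_n}, no rule $\pi_j$ with $0 < j < k$ appears in any closed path of length $n$ in $\Gamma_n$, so the number of paths of length $n - 1$ from such $\pi_j$ back to $e$ is zero. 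On the other hand, $\pi_0$ is a product of two disjoint $k$-cycles and therefore has order $k$, so $\pi_0^n = (\pi_0^k)^2 = e$ exhibits a path of length $n - 1$ from $\pi_0$ back to $e$, completing the separation.

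For alphabet stability, the $n$-cycle $\pi_k$ satisfies $\pi_k^n = e$ and thus admits a path of length $n - 1$ back to $e$; similarly, $\pi_0^k = e$ gives $\pi_0$ a return path of length $k - 1 < n$. For the remaining neighbours $\pi_i$ with $0 < i < k$, iterating the strict descent from $\sigma(k, n + 1) = 2$ yields $\sigma(i, n + 1) \geq k - i + 2 \geq 3$, so there are more than one path of length $n$ back to $e$. Invoking the word graph test then gives $\myaut{G_m} \cong S_m$.

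The main obstacle in this plan is the separation of $\pi_0$ from the intermediate $\pi_j$ in the subregularity step: the strict descent lemma on $\sigma$-sequences only compares consecutive values from index $1$ onwards, and does not directly relate $\sigma(0, n + 1)$ to the values $\sigma(j, n + 1)$ for $0 < j < k$, so path counts of length $n$ alone are insufficient. Lemma~\ref{non_exist_len_n} serves as the essential workaround, letting us distinguish $\pi_0$ from these intermediate rules via path counts of length $n - 1$ rather than $n$.
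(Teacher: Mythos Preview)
Your proof is correct and follows essentially the same route as the paper: strict descent of $\sigma(i,n+1)$ separates the $\pi_i$ with $1\le i\le k$, the bound $\sigma(0,n+1)\ge 3>2=\sigma(k,n+1)$ separates $\pi_0$ from $\pi_k$, and Lemma~\ref{non_exist_len_n} plus $\pi_0^n=e$ handles the remaining $(\pi_0,\pi_j)$ pairs via length-$(n-1)$ counts. The only cosmetic difference is in the alphabet-stability check: the paper simply notes that $\sigma(i,n+1)\ge 2$ for every $i$, whereas you split off $\pi_0$ and $\pi_k$ and exhibit short return paths for them; both arguments are valid.
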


\begin{proof}
We have shown that the paths of length $n + 1$ from each vertex in $\Gamma_n$
back to itself corresponds to a $\sigma$-sequence. Hence, for each
$e \in \Gamma_n$ the number of paths from each $\pi_i \in \Pi_n$ to $e$ of
length $n$ is distinct for each $i$, with the possible exception of $\pi_0$.
Further, we always have at least two paths of length $n$ from each $\pi_i$ to
$e$, hence we have alphabet stability. Now to show subregularity, we consider
paths of length $n - 1$ from each $\pi_i$ to $e$. Lemma~\ref{non_exist_len_n}
shows that only $\pi_0$ and $\pi_k$ may have a path of length $n - 1$ to $e$,
and $\pi_0$ is two disjoint $k$-cycles and hence $\pi_0^n = e$ gives us a path
of length $n - 1$ from $\pi_0$ to $e$. Therefore, if $\Gamma_n$ is not regular
there is an automorphism which exchanges $\pi_0$ and $\pi_k$. However, there are
only two paths of length $n$ from $\pi_k$ to $e$, and at least three paths of
length $n$ from $\pi_0$ to $e$. Hence we have established subregularity.
\end{proof}

\section{Problems in Other Cases}

The above argument may seem somewhat unsatisfying as it begins with the
observation of some reasonably general facts of G\'omez graphs but then only
goes on to make arguments of path counting in the cases $\mydg{k}{k}$ and
$\mydg{k}{k+1}$. However, we will now give an example to show that, though
perhaps the counting argument could be generalised to count all similar paths in
G\'omez graphs, this would not serve our purpose of classifying when G\'omez
graphs have full automorphism group $S_n$.

We include here a computed table showing numbers of closed paths of length
$k + 1$ starting with each $\pi_i$, written in the order $\pi_0, \pi_2$, etc.
Noting that the cases we have addressed are $k = k'$ and $k = k' + 1$.
\begin{table}[H]
\centering
\begin{tabular}{cr|c|c|c|c|c}
  & & \multicolumn{5}{c}{$k$} \\
  & & 2  & 3     & 4          & 5              & 6                  \\
  \hline \multirow{3}{*}{$k'$}&
 1 & 2,2 & 4,5,5 & 8,11,15,11 & 16,23,37,37,23 & 32,47,83,100,83,47 \\
&3 & -   & 1,2   & 2,5,3      & 4,12,12,12     & 8,27,35,44,33      \\
&5 & -   & -     & -          & 1,2,4          & 2,5,13,8           \\
\end{tabular}
\end{table}
\noindent
From this table we see that the first difficulty we encounter with the case
$k = k' + 2$ occurs for $k' = 3$, where there are twelve closed paths of length
$6$ starting from each of $\pi_1, \pi_2$ and $\pi_3$. This problem cannot be
resolved with the methods used to address the cases $k' = k$ and $k' = k + 1$.

In addition, this table highlights the interesting case of $k' = 1$. In this
case, we see the number of closed paths of length $k + 1$ starting with each
$\pi_i$ is equal to those starting from $\pi_{k - i}$ (taking $\pi_k = \pi_0$ in
the special case). We shall provide an informal proof of this to demonstrate
that this difficulty cannot possibly be overcome to make this style of proof
work for the case $k' = 1$.

We consider the case $k = 8$ and $k' = 1$. In this case, we have the rules
$\pi_0, \pi_1, \pi_2, \pi_3, \pi_4, \pi_5, \pi_6$ and $\pi_7$ as follows:
\begin{center}
\begin{tikzpicture}[scale=0.6]
	\foreach \x in {1,...,8} {
		\draw (\x, 0) node {};
		\draw (\x, 1) node {};
		\draw (\x, 2) node {};
		\draw (\x, 3) node {};
		\draw (\x, 4) node {};
		\draw (\x, 5) node {};
		\draw (\x, 6) node {};
		\draw (\x, 7) node {};
	}
	\foreach \x in {11,...,18} {
		\draw (\x, 0) node {};
		\draw (\x, 1) node {};
		\draw (\x, 2) node {};
		\draw (\x, 3) node {};
		\draw (\x, 4) node {};
		\draw (\x, 5) node {};
		\draw (\x, 6) node {};
		\draw (\x, 7) node {};
	}
	\draw [->, gray] (1, 7) to (4, 6);
	\draw [->, gray] (2, 7) to (1, 6);
	\draw [->, gray] (3, 7) to (2, 6);
	\draw [->, gray] (4, 7) to (3, 6);
	\draw [->, gray] (5, 7) to (8, 6);
	\draw [->, gray] (6, 7) to (5, 6);
	\draw [->, gray] (7, 7) to (6, 6);
	\draw [->, gray] (8, 7) to (7, 6);
	\node[fill=none, draw=none] at (0,  6.5) {$\pi_0$};
	\draw [->, gray] (1, 5) to (3, 4);
	\draw [->, gray] (2, 5) to (1, 4);
	\draw [->, gray] (3, 5) to (2, 4);
	\draw [->, gray] (4, 5) to (8, 4);
	\draw [->, gray] (5, 5) to (4, 4);
	\draw [->, gray] (6, 5) to (5, 4);
	\draw [->, gray] (7, 5) to (6, 4);
	\draw [->, gray] (8, 5) to (7, 4);
	\node[fill=none, draw=none] at (0,  4.5) {$\pi_1$};
	\draw [->, gray] (1, 3) to (2, 2);
	\draw [->, gray] (2, 3) to (1, 2);
	\draw [->, gray] (3, 3) to (8, 2);
	\draw [->, gray] (4, 3) to (3, 2);
	\draw [->, gray] (5, 3) to (4, 2);
	\draw [->, gray] (6, 3) to (5, 2);
	\draw [->, gray] (7, 3) to (6, 2);
	\draw [->, gray] (8, 3) to (7, 2);
	\node[fill=none, draw=none] at (0,  2.5) {$\pi_2$};
	\draw [->, gray] (1, 1) to (1, 0);
	\draw [->, gray] (2, 1) to (8, 0);
	\draw [->, gray] (3, 1) to (2, 0);
	\draw [->, gray] (4, 1) to (3, 0);
	\draw [->, gray] (5, 1) to (4, 0);
	\draw [->, gray] (6, 1) to (5, 0);
	\draw [->, gray] (7, 1) to (6, 0);
	\draw [->, gray] (8, 1) to (7, 0);
	\node[fill=none, draw=none] at (0,  0.5) {$\pi_3$};
	\draw [->, gray] (11, 7) to (18, 6);
	\draw [->, gray] (12, 7) to (11, 6);
	\draw [->, gray] (13, 7) to (12, 6);
	\draw [->, gray] (14, 7) to (13, 6);
	\draw [->, gray] (15, 7) to (14, 6);
	\draw [->, gray] (16, 7) to (15, 6);
	\draw [->, gray] (17, 7) to (16, 6);
	\draw [->, gray] (18, 7) to (17, 6);
	\node[fill=none, draw=none] at (10,  6.5) {$\pi_4$};
	\draw [->, gray] (11, 5) to (15, 4);
	\draw [->, gray] (12, 5) to (11, 4);
	\draw [->, gray] (13, 5) to (12, 4);
	\draw [->, gray] (14, 5) to (13, 4);
	\draw [->, gray] (15, 5) to (14, 4);
	\draw [->, gray] (16, 5) to (18, 4);
	\draw [->, gray] (17, 5) to (16, 4);
	\draw [->, gray] (18, 5) to (17, 4);
	\node[fill=none, draw=none] at (10,  4.5) {$\pi_7$};
	\draw [->, gray] (11, 3) to (16, 2);
	\draw [->, gray] (12, 3) to (11, 2);
	\draw [->, gray] (13, 3) to (12, 2);
	\draw [->, gray] (14, 3) to (13, 2);
	\draw [->, gray] (15, 3) to (14, 2);
	\draw [->, gray] (16, 3) to (15, 2);
	\draw [->, gray] (17, 3) to (18, 2);
	\draw [->, gray] (18, 3) to (17, 2);
	\node[fill=none, draw=none] at (10,  2.5) {$\pi_6$};
	\draw [->, gray] (11, 1) to (17, 0);
	\draw [->, gray] (12, 1) to (11, 0);
	\draw [->, gray] (13, 1) to (12, 0);
	\draw [->, gray] (14, 1) to (13, 0);
	\draw [->, gray] (15, 1) to (14, 0);
	\draw [->, gray] (16, 1) to (15, 0);
	\draw [->, gray] (17, 1) to (16, 0);
	\draw [->, gray] (18, 1) to (18, 0);
	\node[fill=none, draw=none] at (10,  0.5) {$\pi_5$};
\end{tikzpicture}
\end{center}
\noindent
We consider a diagram of the closed path
$\pi_2 \pi_3 \pi_7 \pi_7 \pi_0 \pi_1 \pi_2 \pi_3 \pi_2$ and note that if we
rotate it by 180 degrees and reverse the arrows we get another closed path.
\begin{center}
\begin{tikzpicture}[scale=0.6]
	\foreach \x in {1,...,8} {
		\draw (\x, 0) node {};
		\draw (\x, 1) node {};
		\draw (\x, 2) node {};
		\draw (\x, 3) node {};
		\draw (\x, 4) node {};
		\draw (\x, 5) node {};
		\draw (\x, 6) node {};
		\draw (\x, 7) node {};
		\draw (\x, 8) node {};
		\draw (\x, 9) node {};
	}
	\foreach \x in {11,...,18} {
		\draw (\x, 0) node {};
		\draw (\x, 1) node {};
		\draw (\x, 2) node {};
		\draw (\x, 3) node {};
		\draw (\x, 4) node {};
		\draw (\x, 5) node {};
		\draw (\x, 6) node {};
		\draw (\x, 7) node {};
		\draw (\x, 8) node {};
		\draw (\x, 9) node {};
	}
	\draw [->, gray] (1, 9) to (2, 8);
	\draw [->, gray] (2, 9) to (1, 8);
	\draw [->, gray] (3, 9) to (8, 8);
	\draw [->, gray] (4, 9) to (3, 8);
	\draw [->, gray] (5, 9) to (4, 8);
	\draw [->, gray] (6, 9) to (5, 8);
	\draw [->, gray] (7, 9) to (6, 8);
	\draw [->, gray] (8, 9) to (7, 8);
	\node[fill=none, draw=none] at (0,  8.5) {\textcolor{red}{$\pi_2$}};
	\draw [->, gray] (1, 8) to (1, 7);
	\draw [->, gray] (2, 8) to (8, 7);
	\draw [->, gray] (3, 8) to (2, 7);
	\draw [->, gray] (4, 8) to (3, 7);
	\draw [->, gray] (5, 8) to (4, 7);
	\draw [->, gray] (6, 8) to (5, 7);
	\draw [->, gray] (7, 8) to (6, 7);
	\draw [->, gray] (8, 8) to (7, 7);
	\node[fill=none, draw=none] at (0,  7.5) {$\pi_3$};
	\draw [->, gray] (1, 7) to (5, 6);
	\draw [->, gray] (2, 7) to (1, 6);
	\draw [->, gray] (3, 7) to (2, 6);
	\draw [->, gray] (4, 7) to (3, 6);
	\draw [->, gray] (5, 7) to (4, 6);
	\draw [->, gray] (6, 7) to (8, 6);
	\draw [->, gray] (7, 7) to (6, 6);
	\draw [->, gray] (8, 7) to (7, 6);
	\node[fill=none, draw=none] at (0,  6.5) {$\pi_7$};
	\draw [->, gray] (1, 6) to (5, 5);
	\draw [->, gray] (2, 6) to (1, 5);
	\draw [->, gray] (3, 6) to (2, 5);
	\draw [->, gray] (4, 6) to (3, 5);
	\draw [->, gray] (5, 6) to (4, 5);
	\draw [->, gray] (6, 6) to (8, 5);
	\draw [->, gray] (7, 6) to (6, 5);
	\draw [->, gray] (8, 6) to (7, 5);
	\node[fill=none, draw=none] at (0,  5.5) {$\pi_7$};
	\draw [->, gray] (1, 5) to (4, 4);
	\draw [->, gray] (2, 5) to (1, 4);
	\draw [->, gray] (3, 5) to (2, 4);
	\draw [->, gray] (4, 5) to (3, 4);
	\draw [->, gray] (5, 5) to (8, 4);
	\draw [->, gray] (6, 5) to (5, 4);
	\draw [->, gray] (7, 5) to (6, 4);
	\draw [->, gray] (8, 5) to (7, 4);
	\node[fill=none, draw=none] at (0,  4.5) {$\pi_0$};
	\draw [->, gray] (1, 4) to (3, 3);
	\draw [->, gray] (2, 4) to (1, 3);
	\draw [->, gray] (3, 4) to (2, 3);
	\draw [->, gray] (4, 4) to (8, 3);
	\draw [->, gray] (5, 4) to (4, 3);
	\draw [->, gray] (6, 4) to (5, 3);
	\draw [->, gray] (7, 4) to (6, 3);
	\draw [->, gray] (8, 4) to (7, 3);
	\node[fill=none, draw=none] at (0,  3.5) {$\pi_1$};
	\draw [->, gray] (1, 3) to (2, 2);
	\draw [->, gray] (2, 3) to (1, 2);
	\draw [->, gray] (3, 3) to (8, 2);
	\draw [->, gray] (4, 3) to (3, 2);
	\draw [->, gray] (5, 3) to (4, 2);
	\draw [->, gray] (6, 3) to (5, 2);
	\draw [->, gray] (7, 3) to (6, 2);
	\draw [->, gray] (8, 3) to (7, 2);
	\node[fill=none, draw=none] at (0,  2.5) {$\pi_2$};
	\draw [->, gray] (1, 2) to (1, 1);
	\draw [->, gray] (2, 2) to (8, 1);
	\draw [->, gray] (3, 2) to (2, 1);
	\draw [->, gray] (4, 2) to (3, 1);
	\draw [->, gray] (5, 2) to (4, 1);
	\draw [->, gray] (6, 2) to (5, 1);
	\draw [->, gray] (7, 2) to (6, 1);
	\draw [->, gray] (8, 2) to (7, 1);
	\node[fill=none, draw=none] at (0,  1.5) {$\pi_3$};
	\draw [->, gray] (1, 1) to (2, 0);
	\draw [->, gray] (2, 1) to (1, 0);
	\draw [->, gray] (3, 1) to (8, 0);
	\draw [->, gray] (4, 1) to (3, 0);
	\draw [->, gray] (5, 1) to (4, 0);
	\draw [->, gray] (6, 1) to (5, 0);
	\draw [->, gray] (7, 1) to (6, 0);
	\draw [->, gray] (8, 1) to (7, 0);
	\node[fill=none, draw=none] at (0,  0.5) {$\pi_2$};
	\draw [->, gray] (11, 9) to (16, 8);
	\draw [->, gray] (12, 9) to (11, 8);
	\draw [->, gray] (13, 9) to (12, 8);
	\draw [->, gray] (14, 9) to (13, 8);
	\draw [->, gray] (15, 9) to (14, 8);
	\draw [->, gray] (16, 9) to (15, 8);
	\draw [->, gray] (17, 9) to (18, 8);
	\draw [->, gray] (18, 9) to (17, 8);
	\node[fill=none, draw=none] at (10,  8.5) {$\pi_6$};
	\draw [->, gray] (11, 8) to (17, 7);
	\draw [->, gray] (12, 8) to (11, 7);
	\draw [->, gray] (13, 8) to (12, 7);
	\draw [->, gray] (14, 8) to (13, 7);
	\draw [->, gray] (15, 8) to (14, 7);
	\draw [->, gray] (16, 8) to (15, 7);
	\draw [->, gray] (17, 8) to (16, 7);
	\draw [->, gray] (18, 8) to (18, 7);
	\node[fill=none, draw=none] at (10,  7.5) {$\pi_5$};
	\draw [->, gray] (11, 7) to (16, 6);
	\draw [->, gray] (12, 7) to (11, 6);
	\draw [->, gray] (13, 7) to (12, 6);
	\draw [->, gray] (14, 7) to (13, 6);
	\draw [->, gray] (15, 7) to (14, 6);
	\draw [->, gray] (16, 7) to (15, 6);
	\draw [->, gray] (17, 7) to (18, 6);
	\draw [->, gray] (18, 7) to (17, 6);
	\node[fill=none, draw=none] at (10,  6.5) {$\pi_6$};
	\draw [->, gray] (11, 6) to (15, 5);
	\draw [->, gray] (12, 6) to (11, 5);
	\draw [->, gray] (13, 6) to (12, 5);
	\draw [->, gray] (14, 6) to (13, 5);
	\draw [->, gray] (15, 6) to (14, 5);
	\draw [->, gray] (16, 6) to (18, 5);
	\draw [->, gray] (17, 6) to (16, 5);
	\draw [->, gray] (18, 6) to (17, 5);
	\node[fill=none, draw=none] at (10,  5.5) {$\pi_7$};
	\draw [->, gray] (11, 5) to (14, 4);
	\draw [->, gray] (12, 5) to (11, 4);
	\draw [->, gray] (13, 5) to (12, 4);
	\draw [->, gray] (14, 5) to (13, 4);
	\draw [->, gray] (15, 5) to (18, 4);
	\draw [->, gray] (16, 5) to (15, 4);
	\draw [->, gray] (17, 5) to (16, 4);
	\draw [->, gray] (18, 5) to (17, 4);
	\node[fill=none, draw=none] at (10,  4.5) {$\pi_0$};
	\draw [->, gray] (11, 4) to (13, 3);
	\draw [->, gray] (12, 4) to (11, 3);
	\draw [->, gray] (13, 4) to (12, 3);
	\draw [->, gray] (14, 4) to (18, 3);
	\draw [->, gray] (15, 4) to (14, 3);
	\draw [->, gray] (16, 4) to (15, 3);
	\draw [->, gray] (17, 4) to (16, 3);
	\draw [->, gray] (18, 4) to (17, 3);
	\node[fill=none, draw=none] at (10,  3.5) {$\pi_1$};
	\draw [->, gray] (11, 3) to (13, 2);
	\draw [->, gray] (12, 3) to (11, 2);
	\draw [->, gray] (13, 3) to (12, 2);
	\draw [->, gray] (14, 3) to (18, 2);
	\draw [->, gray] (15, 3) to (14, 2);
	\draw [->, gray] (16, 3) to (15, 2);
	\draw [->, gray] (17, 3) to (16, 2);
	\draw [->, gray] (18, 3) to (17, 2);
	\node[fill=none, draw=none] at (10,  2.5) {$\pi_1$};
	\draw [->, gray] (11, 2) to (17, 1);
	\draw [->, gray] (12, 2) to (11, 1);
	\draw [->, gray] (13, 2) to (12, 1);
	\draw [->, gray] (14, 2) to (13, 1);
	\draw [->, gray] (15, 2) to (14, 1);
	\draw [->, gray] (16, 2) to (15, 1);
	\draw [->, gray] (17, 2) to (16, 1);
	\draw [->, gray] (18, 2) to (18, 1);
	\node[fill=none, draw=none] at (10,  1.5) {$\pi_5$};
	\draw [->, gray] (11, 1) to (16, 0);
	\draw [->, gray] (12, 1) to (11, 0);
	\draw [->, gray] (13, 1) to (12, 0);
	\draw [->, gray] (14, 1) to (13, 0);
	\draw [->, gray] (15, 1) to (14, 0);
	\draw [->, gray] (16, 1) to (15, 0);
	\draw [->, gray] (17, 1) to (18, 0);
	\draw [->, gray] (18, 1) to (17, 0);
	\node[fill=none, draw=none] at (10,  0.5) {\textcolor{red}{$\pi_6$}};
\end{tikzpicture}
\end{center}
\noindent
Hence from the closed path
\[ p \triangleq \pi_2 \pi_3 \pi_7 \pi_7 \pi_0 \pi_1 \pi_2 \pi_3 \pi_2 \]
we form the closed path
\[ q \triangleq \pi_6 \pi_5 \pi_6 \pi_7 \pi_0 \pi_1 \pi_1 \pi_5 \pi_6. \]
As we have highlighted
in the above example, closed paths beginning with $\pi_2$ are in bijective
correspondence with closed paths ending in $\pi_6$. Finally, if we consider the
rotation of $q$ such that places the highlighted $\pi_6$ at the beginning, we
have found a bijective correspondence between closed paths of a given length
beginning with $\pi_2$ and those beginning with $\pi_6$. Hence, we cannot
consider any length of path to differentiate these two rules under automorphisms
of $\Gamma$.

\section{Conclusion}

In the context of the degree diameter problem in the directed case, the
possibility of finding larger graphs for given degree and diameter than the
G\'omez graphs remains open (and, indeed, it appears highly likely that larger
examples do exist). Hence the optimality result for the G\'omez graphs primarily
serves to demonstrate the limitations of this particular method of construction,
and that the construction of larger graphs will likely require altogether new
ideas.

Further, whilst we have shown an optimality result for the G\'{o}mez graphs
we have not shown that they are unique with this property.
The fact the cycles used in the G\'{o}mez graphs had the smaller
cycle on the left and larger cycle on the right is not necessarily required.
Therefore, an interesting question would be to determine which of the potential
optimal constructions work. Beyond this, if other constructions work, it is
possible that they could have larger automorphism groups.

In order to raise further questions, we now define \textit{G\'omez like}
graphs. Suppose a set of permutations $\Pi \subseteq S_n$ contains at least one
permutation containing a cycle of each length up to $n$, and $\Pi$ is as small
as possible with this property (i.e. $\Pi$ meets our optimality condition from
previously). If the word graph family $\mywgp{\Pi}$ is diameter $n$, then we
shall call these graphs G\'omez like. An obvious first question regarding
G\'omez like graphs is what conditions such a set $\Pi$ needs to fulfil to be
admissible.

With regards to the classification of the automorphism groups of G\'omez graphs,
what classification has been achieved misses out a number of important cases. In
rough order of importance, these are
\begin{enumerate}[label=\textnormal{(\roman*)}]
	\item the automorphism groups of undirected G\'omez graphs,
	\item the automorphism groups of the graphs $\mydg{k}{k'}$ for
		$k \geq k' + 2$,
	\item the automorphism groups of $\mydg{k}{1}$,
	\item the automorphism groups of G\'omez like graphs.
\end{enumerate}

A particular question considered by the author was whether a set of permutations
$\Pi$ is admissible for shift restricted word graphs if, and only if, the
corresponding alphabet fixing subgraph $\Gamma$ is $n$-reachable. The reason for
this question is that both the Faber-Moore-Chen graphs and the G\'omez graphs
have this property, and further a simple argument shows that admissibility
implies a weaker but similar property as we now show.

\begin{lemma}
If $\Pi \subseteq S_n$ is an admissible set of permutations, letting $k < n$ and
$m = n - k$, then for any $\tau \in S_n$ with
\[
	\tau(i) = \begin{cases}
		i - m, & \text{if $m < i \leq n$}, \\
		j,     & \text{otherwise},            \\
	\end{cases}
\]
there are some $\pi_1, \pi_2, \dots, \pi_m \in \Pi$ such that
$\pi_1 \pi_2 \dots \pi_m = \tau$.
\end{lemma}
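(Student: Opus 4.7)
The plan is to apply admissibility to a carefully chosen pair of vertices in a sufficiently large word graph and read off the required decomposition from the rigid structure of the geodesic between them. I would work in some $G_M$ with $M \geq 4n$ so that admissibility gives $\mydiam{G_M} = n$, and pick inside its alphabet the letters $x_1, \ldots, x_n$ together with $k$ fresh letters $y_1, \ldots, y_k$. With $u = x_1 x_2 \cdots x_n$ as the base point, set
\[
    v = x_{\tau^{-1}(k+1)}\, x_{\tau^{-1}(k+2)}\, \cdots\, x_{\tau^{-1}(n)}\, y_1\, y_2\, \cdots\, y_k.
\]
This is a legitimate vertex because the hypothesis on $\tau$ forces $\tau^{-1}$ to be a bijection from $\{k+1, \ldots, n\}$ onto $\{1, \ldots, m\}$, so the first $m$ slots of $v$ hold a permutation of $\{x_1, \ldots, x_m\}$ and the last $k$ slots hold the fresh letters.

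Since $x_n \in \alpha(u)$ but $x_n \notin \alpha(v)$, Corollary~\ref{path_len_pos} gives $\mydist{u}{v} \geq p_{x_n}(u) - p_{x_n}(v) = n$, while admissibility gives the matching upper bound, so I can fix a path $u = u_0 \to u_1 \to \cdots \to u_n = v$ of length exactly $n$. The central step is to establish rigidity of this path: each $x_j$ with $j \in \{m+1, \ldots, n\}$ must be dropped, and dropping requires the letter to reach position~$1$, which by Lemma~\ref{shift_one} takes at least $j - 1$ steps. Thus the drop step $s_{x_j}$ satisfies $s_{x_j} \geq j$, and since these $k$ drop steps are distinct integers in $\{1, \ldots, n\}$ with those lower bounds, a backwards induction from $s_{x_n} \geq n$ forces $s_{x_{m+j}} = m + j$ for each $j$. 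Hence the steps $m+1, \ldots, n$ are the alphabet-changing edges, discarding $x_{m+1}, \ldots, x_n$ in order, while steps $1, \ldots, m$ are alphabet-fixing rules $\pi_1, \pi_2, \ldots, \pi_m \in \Pi$.

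Reading off the intermediate word $u_m$, the drop schedule demands that its positions $1, \ldots, k$ be $x_{m+1}, x_{m+2}, \ldots, x_n$ in order (each will be at position~$1$ just before its scheduled drop), and comparing $u_n$ against $v$ after the alphabet-changing suffix demands that its positions $k+1, \ldots, n$ be $x_{\tau^{-1}(k+1)}, \ldots, x_{\tau^{-1}(n)}$. Thus $u_m$ is precisely the word produced by applying $\tau$ to $u$, and so the $m$ successive applications $\pi_1, \pi_2, \ldots, \pi_m$ realise $\tau$, giving $\pi_1 \pi_2 \cdots \pi_m = \tau$ in $S_n$. The main obstacle is the rigidity step: one must check that no interleaving of alphabet-changing and alphabet-fixing edges can delay or re-order the drops of $x_{m+1}, \ldots, x_n$, and also that no spurious letter is introduced only to be discarded later (since an introduce-then-drop cycle would take more than $n$ steps and so is forbidden in a path of length $n$). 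A minor book-keeping nuisance will be keeping track of the convention under which the paper's position-indexed action $\pi(u)_j = u_{\pi(j)}$, applied from left to right, composes to the lemma's permutation $\tau$.
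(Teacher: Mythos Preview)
Your approach is correct and is essentially the paper's own argument, just with the target vertex mirrored: the paper chooses $v = y_1 y_2 \cdots y_k\, x_{?}\cdots x_{?}$ so that the $k$ alphabet-changing steps are forced to come \emph{first} (each $y_j$ must be introduced at step $j$ to reach position $j$), leaving the last $m$ steps as the permutations in $\Pi$; you instead place the fresh letters at the tail, forcing the alphabet-changing steps to come \emph{last} and reading off $\pi_1,\dots,\pi_m$ from the first $m$ steps. Both versions hinge on exactly the same rigidity mechanism (shift restriction via Lemma~\ref{shift_one} and Corollary~\ref{path_len_pos}, together with the observation that any letter introduced on a length-$n$ path cannot subsequently be dropped), and your write-up is in fact more explicit about this than the paper's sketch, which simply points to Lemma~\ref{contains_cycle} and an illustrative example. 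The convention issue you flag at the end is real but harmless, and affects the paper's own statement just as much as yours.
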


\begin{proof}
Similar to Lemma~\ref{contains_cycle}. We simply consider a path from
$x_1 x_2 \dots x_n$ to
$y_1 y_2 \dots y_k x_{\tau(k + 1)} x_{\tau(k + 2)} \dots x_{\tau(n)}$.
Consider the case $n = 9, k = 3, m = 6$ here
\[
	\newcommand{\f}[1]{\textcolor{red}{#1}}
	\newcommand{\g}[1]{\textcolor{blue}{#1}}
	\begin{matrix}
	&  x_1 &   x_2 &   x_3 &\f{x_4}&\f{x_5}&\f{x_6}&\f{x_7}&\f{x_8}&\f{x_9}\\
	&  x_2 &   x_3 &\f{x_4}&\f{x_5}&\f{x_6}&\f{x_7}&\f{x_8}&\f{x_9}&\g{y_1}\\
	&  x_3 &\f{x_4}&\f{x_5}&\f{x_6}&\f{x_7}&\f{x_8}&\f{x_9}&\g{y_1}&\g{y_2}\\
	p_1&\f{x_4}&\f{x_5}&\f{x_6}&\f{x_7}&\f{x_8}&\f{x_9}&\g{y_1}&\g{y_2}&\g{y_3}\\
	&  x   &   x   &   x   &   x   &   x   &\g{y_1}&\g{y_2}&\g{y_3}&   x   \\
	&  x   &   x   &   x   &   x   &\g{y_1}&\g{y_2}&\g{y_3}&   x   &   x   \\
	&  x   &   x   &   x   &\g{y_1}&\g{y_2}&\g{y_3}&   x   &   x   &   x   \\
	&  x   &   x   &\g{y_1}&\g{y_2}&\g{y_3}&   x   &   x   &   x   &   x   \\
	&  x   &\g{y_1}&\g{y_2}&\g{y_3}&   x   &   x   &   x   &   x   &   x   \\
	p_2&\g{y_1}&\g{y_2}&\g{y_3}&   x   &   x   &   x   &   x   &   x   &   x   \\
	\end{matrix}
\]
Considering the path from $p_1$ to $p_2$ we find each permutation in this path
must be in $\Pi$ and the permutation of $x_4, x_5, \dots, x_9$ is arbitrary.
\end{proof}

\begin{corollary}
If $\tau \in S_n$ such that there exists some $k < n$ with
\[
	\tau(i) = \begin{cases}
		i' < k    & \text{if $i < k$}     \\
		i' \geq k & \text{if $i \geq k$},
	\end{cases}
\]
then there exist $\pi_1, \pi_2, \dots, \pi_n \in \Pi$ such that
$\tau = \pi_1 \pi_2 \dots \pi_n$.
\end{corollary}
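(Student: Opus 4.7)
The plan is to realize $\tau$ as a composition $\sigma_1 \sigma_2$ of two factors, each obtained from a separate application of the preceding lemma, whose lengths as products from $\Pi$ add up to exactly $n$. Write $A = \{1, \dots, k-1\}$ and $B = \{k, \dots, n\}$; by hypothesis $\tau(A) = A$ and $\tau(B) = B$, so $\tau$ is determined independently by its restrictions $\tau|_A$ and $\tau|_B$, which are arbitrary permutations of $A$ and $B$ respectively.

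First I apply the lemma with its parameter set to $m_1 = n - k + 1 = |B|$, obtaining $\sigma_1 \in S_n$ expressible as a product of $m_1$ elements of $\Pi$. By the lemma's formula, $\sigma_1$ deterministically shifts positions $\{n-k+2, \dots, n\}$ down onto $A$ via $i \mapsto i - m_1$, while on the remaining positions $\{1, \dots, n-k+1\}$ its action is a freely chosen bijection onto $B$. Next I apply the lemma with parameter $m_2 = k - 1 = |A|$, obtaining $\sigma_2 \in S_n$ as a product of $m_2$ elements of $\Pi$; here $\sigma_2$ deterministically shifts $B$ down onto $\{1, \dots, n-k+1\}$ via $i \mapsto i - m_2$, and acts as a freely chosen bijection of $A$ onto $\{n-k+2, \dots, n\}$.

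The concatenated product is then a product of $m_1 + m_2 = n$ elements of $\Pi$. Tracing through positions, for $i \in A$ the value $\sigma_2(i)$ is our free choice in $\{n-k+2, \dots, n\}$, which $\sigma_1$ then carries deterministically back into $A$; for $i \in B$, conversely, $\sigma_2(i) = i - m_2$ lands deterministically in $\{1, \dots, n-k+1\}$, which $\sigma_1$ then sends to our free choice in $B$. Thus by choosing the free part of $\sigma_2$ to realize $\tau|_A$ (after composition with the deterministic shift of $\sigma_1$) and the free part of $\sigma_1$ to realize $\tau|_B$, the product $\sigma_1 \sigma_2$ equals $\tau$. The only point requiring real care is the direction of composition inherited from concatenating the two paths: we must check that the two free pieces compose with the two shifts as above rather than incompatibly cancelling, but this is a routine bookkeeping check once the convention of the lemma is fixed.
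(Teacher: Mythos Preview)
Your proposal is correct and follows essentially the same approach as the paper: apply the preceding lemma twice with complementary parameters $m_1 = |B|$ and $m_2 = |A|$ summing to $n$, and concatenate the resulting products. The paper's proof is a one-line remark to this effect, and your write-up simply fills in the bookkeeping of how the free and deterministic parts of the two factors interlock to realize $\tau|_A$ and $\tau|_B$.
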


\begin{proof}
This is simply the concatenation of two of the previous permutations we showed
exist in the previous lemma.
\end{proof}

Hence, if $\Pi$ is admissible, we can easily see ``a lot'' of permutations must
be $n$-reachable. This taken in conjunction with the fact the known optimal
admissible $\Pi$ are $n$-reachable suggests that this may be a necessary
requirement.

\bibliographystyle{plain}
\bibliography{paper}

\begin{thebibliography}{1}

\bibitem{regular-action-2}
Peter~J Cameron.
\newblock {\em Permutation Groups}.
\newblock Cambridge University Press, 1999.

\bibitem{regular-action}
J~Dixon and B~Mortimer.
\newblock {\em Permutation Groups}.
\newblock Springer, New York, 1996.

\bibitem{faber-moore-chen}
V~Faber and J~W Moore.
\newblock High-degree low-diameter interconnection networks with vertex
  symmetry: the directed case.
\newblock {\em Technical Report LA-UR-88-1051, Los Alamos National Laboratory,
  Los Alamos}, 1988.

\bibitem{faber-moore-chen-2}
Vance Faber, James~W. Moore, and William Y.~C. Chen.
\newblock Cycle prefix digraphs for symmetric interconnection networks.
\newblock {\em Networks}, 23:641--649, 1993.

\bibitem{gomez}
J~G{\'o}mez.
\newblock Large vertex symmetric digraphs.
\newblock {\em Networks}, pages 241--250, 2007.

\bibitem{degree-diameter-survey}
Mirka Miller and Jozef {\v{S}}ir{\'a}{\v{n}}.
\newblock Moore graphs and beyond: A survey of the degree/diameter problem.
\newblock {\em The Electronic Journal of Combinatorics}, DS14, 2013.

\bibitem{when-cayley}
M\'{a}ria \v{Z}d\'{i}malov\'{a} and \v{L}ubica Stanekov\'{a}.
\newblock Which {Faber}-{Moore}-{Chen} digraphs are {Cayley} digraphs?
\newblock {\em Discrete Mathematics}, pages 2238--2240, 2010.

\bibitem{regular-action-3}
H~Zassenhaus.
\newblock {\"U}ber endliche {Fastk{\"o}rper}.
\newblock {\em Abh. Math. Sem. Hamburg}, 11:187--220, 1936.

\end{thebibliography}

\end{document}